\documentclass[12pt,leqno]{amsart}
\usepackage{amssymb, amsmath}

\usepackage[left=2cm,right=2cm,top=2.5cm,bottom=2.5cm]{geometry}

\usepackage{caption} 
\usepackage[colorlinks,linkcolor=blue,pagebackref=true,citecolor=red]{hyperref}
\usepackage[all]{xy}
\usepackage{multicol}
\begin{document}

\theoremstyle{plain}
\newtheorem{thm}{Theorem}[section]
\newtheorem*{thm1}{Theorem 1}
\newtheorem*{thm2}{Theorem 2}
\newtheorem*{note*}{\it Note}
\newtheorem{lemma}[thm]{Lemma}
\newtheorem{lem}[thm]{Lemma}
\newtheorem{cor}[thm]{Corollary}
\newtheorem{prop}[thm]{Proposition}
\newtheorem{propose}[thm]{Proposition}
\newtheorem{variant}[thm]{Variant}
\theoremstyle{definition}
\newtheorem{notations}[thm]{Notations}
\newtheorem{rem}[thm]{Remark}
\newtheorem{rmk}[thm]{Remark}
\newtheorem{rmks}[thm]{Remarks}
\newtheorem{defn}[thm]{Definition}
\newtheorem{ex}[thm]{Example}
\newtheorem{exs}[thm]{Examples}
\newtheorem{claim}[thm]{Claim}
\newtheorem{ass}[thm]{Assumption}
\numberwithin{equation}{section}
\newcounter{elno}                
\def\points{\list
{\hss\llap{\upshape{(\roman{elno})}}}{\usecounter{elno}}} 
\let\endpoints=\endlist


\catcode`\@=11
%
%
\def\opn#1#2{\def#1{\mathop{\kern0pt\fam0#2}\nolimits}} 
\def\bold#1{{\bf #1}}%
\def\underrightarrow{\mathpalette\underrightarrow@}
\def\underrightarrow@#1#2{\vtop{\ialign{$##$\cr
 \hfil#1#2\hfil\cr\noalign{\nointerlineskip}%
 #1{-}\mkern-6mu\cleaders\hbox{$#1\mkern-2mu{-}\mkern-2mu$}\hfill
 \mkern-6mu{\to}\cr}}}
\let\underarrow\underrightarrow
\def\underleftarrow{\mathpalette\underleftarrow@}
\def\underleftarrow@#1#2{\vtop{\ialign{$##$\cr
 \hfil#1#2\hfil\cr\noalign{\nointerlineskip}#1{\leftarrow}\mkern-6mu
 \cleaders\hbox{$#1\mkern-2mu{-}\mkern-2mu$}\hfill
 \mkern-6mu{-}\cr}}}
%
%

%
\def\:{\colon}
\let\oldtilde=\tilde
\def\tilde#1{\mathchoice{\widetilde{#1}}{\widetilde{#1}}%
{\indextil{#1}}{\oldtilde{#1}}}
\def\indextil#1{\lower2pt\hbox{$\textstyle{\oldtilde{\raise2pt%
\hbox{$\scriptstyle{#1}$}}}$}}
\def\pnt{{\raise1.1pt\hbox{$\textstyle.$}}}
%

%
\let\amp@rs@nd@\relax
\newdimen\ex@\ex@.2326ex
\newdimen\bigaw@l
\newdimen\minaw@
\minaw@16.08739\ex@
\newdimen\minCDaw@
\minCDaw@2.5pc
\newif\ifCD@
\def\minCDarrowwidth#1{\minCDaw@#1}
\newenvironment{CD}{\@CD}{\@endCD}
\def\@CD{\def\A##1A##2A{\llap{$\vcenter{\hbox
 {$\scriptstyle##1$}}$}\Big\uparrow\rlap{$\vcenter{\hbox{%
$\scriptstyle##2$}}$}&&}%
\def\V##1V##2V{\llap{$\vcenter{\hbox
 {$\scriptstyle##1$}}$}\Big\downarrow\rlap{$\vcenter{\hbox{%
$\scriptstyle##2$}}$}&&}%
\def\={&\hskip.5em\mathrel
 {\vbox{\hrule width\minCDaw@\vskip3\ex@\hrule width
 \minCDaw@}}\hskip.5em&}%
\def\verteq{\Big\Vert&&}%
\def\noarr{&&}%
\def\vspace##1{\noalign{\vskip##1\relax}}\relax\let\amp@rs@nd@&\iffalse}\fi
 \CD@true\vcenter\bgroup\relax\let\\=\cr\iffalse}\fi\tabskip\z@skip\baselineskip20\ex@
 \lineskip3\ex@\lineskiplimit3\ex@\halign\bgroup
 &\hfill$\m@th##$\hfill\cr}
\def\@endCD{\cr\egroup\egroup}
%
\def\>#1>#2>{\amp@rs@nd@\setbox\z@\hbox{$\scriptstyle
 \;{#1}\;\;$}\setbox\@ne\hbox{$\scriptstyle\;{#2}\;\;$}\setbox\tw@
 \hbox{$#2$}\ifCD@
 \global\bigaw@\minCDaw@\else\global\bigaw@\minaw@\fi
 \ifdim\wd\z@>\bigaw@\global\bigaw@\wd\z@\fi
 \ifdim\wd\@ne>\bigaw@\global\bigaw@\wd\@ne\fi
 \ifCD@\hskip.5em\fi
 \ifdim\wd\tw@>\z@
 \mathrel{\mathop{\hbox to\bigaw@{\rightarrowfill}}\limits^{#1}_{#2}}\else
 \mathrel{\mathop{\hbox to\bigaw@{\rightarrowfill}}\limits^{#1}}\fi
 \ifCD@\hskip.5em\fi\amp@rs@nd@}
\def\<#1<#2<{\amp@rs@nd@\setbox\z@\hbox{$\scriptstyle
 \;\;{#1}\;$}\setbox\@ne\hbox{$\scriptstyle\;\;{#2}\;$}\setbox\tw@
 \hbox{$#2$}\ifCD@
 \global\bigaw@\minCDaw@\else\global\bigaw@\minaw@\fi
 \ifdim\wd\z@>\bigaw@\global\bigaw@\wd\z@\fi
 \ifdim\wd\@ne>\bigaw@\global\bigaw@\wd\@ne\fi
 \ifCD@\hskip.5em\fi
 \ifdim\wd\tw@>\z@
 \mathrel{\mathop{\hbox to\bigaw@{\leftarrowfill}}\limits^{#1}_{#2}}\else
 \mathrel{\mathop{\hbox to\bigaw@{\leftarrowfill}}\limits^{#1}}\fi
 \ifCD@\hskip.5em\fi\amp@rs@nd@}
%
%
\newenvironment{CDS}{\@CDS}{\@endCDS}
\def\@CDS{\def\A##1A##2A{\llap{$\vcenter{\hbox
 {$\scriptstyle##1$}}$}\Big\uparrow\rlap{$\vcenter{\hbox{%
$\scriptstyle##2$}}$}&}%
\def\V##1V##2V{\llap{$\vcenter{\hbox
 {$\scriptstyle##1$}}$}\Big\downarrow\rlap{$\vcenter{\hbox{%
$\scriptstyle##2$}}$}&}%
\def\={&\hskip.5em\mathrel
 {\vbox{\hrule width\minCDaw@\vskip3\ex@\hrule width
 \minCDaw@}}\hskip.5em&}
\def\verteq{\Big\Vert&}
\def\novarr{&}
\def\noharr{&&}
\def\SE##1E##2E{\slantedarrow(0,18)(4,-3){##1}{##2}&}
\def\SW##1W##2W{\slantedarrow(24,18)(-4,-3){##1}{##2}&}
\def\NE##1E##2E{\slantedarrow(0,0)(4,3){##1}{##2}&}
\def\NW##1W##2W{\slantedarrow(24,0)(-4,3){##1}{##2}&}
\def\slantedarrow(##1)(##2)##3##4{%
\thinlines\unitlength1pt\lower 6.5pt\hbox{\begin{picture}(24,18)%
\put(##1){\vector(##2){24}}%
\put(0,8){$\scriptstyle##3$}%
\put(20,8){$\scriptstyle##4$}%
\end{picture}}}
\def\vspace##1{\noalign{\vskip##1\relax}}\relax\let\amp@rs@nd@&\iffalse}\fi
 \CD@true\vcenter\bgroup\relax\let\\=\cr\iffalse}\fi\tabskip\z@skip\baselineskip20\ex@
 \lineskip3\ex@\lineskiplimit3\ex@\halign\bgroup
 &\hfill$\m@th##$\hfill\cr}
\def\@endCDS{\cr\egroup\egroup}
%
\newdimen\TriCDarrw@
\newif\ifTriV@
\newenvironment{TriCDV}{\@TriCDV}{\@endTriCD}
\newenvironment{TriCDA}{\@TriCDA}{\@endTriCD}
\def\@TriCDV{\TriV@true\def\TriCDpos@{6}\@TriCD}
\def\@TriCDA{\TriV@false\def\TriCDpos@{10}\@TriCD}
\def\@TriCD#1#2#3#4#5#6{%
\setbox0\hbox{$\ifTriV@#6\else#1\fi$}
\TriCDarrw@=\wd0 \advance\TriCDarrw@ 24pt
\advance\TriCDarrw@ -1em
\def\SE##1E##2E{\slantedarrow(0,18)(2,-3){##1}{##2}&}
\def\SW##1W##2W{\slantedarrow(12,18)(-2,-3){##1}{##2}&}
\def\NE##1E##2E{\slantedarrow(0,0)(2,3){##1}{##2}&}
\def\NW##1W##2W{\slantedarrow(12,0)(-2,3){##1}{##2}&}
\def\slantedarrow(##1)(##2)##3##4{\thinlines\unitlength1pt
\lower 6.5pt\hbox{\begin{picture}(12,18)%
\put(##1){\vector(##2){12}}%
\put(-4,\TriCDpos@){$\scriptstyle##3$}%
\put(12,\TriCDpos@){$\scriptstyle##4$}%
\end{picture}}}
\def\={\mathrel {\vbox{\hrule
   width\TriCDarrw@\vskip3\ex@\hrule width
   \TriCDarrw@}}}
\def\>##1>>{\setbox\z@\hbox{$\scriptstyle
 \;{##1}\;\;$}\global\bigaw@\TriCDarrw@
 \ifdim\wd\z@>\bigaw@\global\bigaw@\wd\z@\fi
 \hskip.5em
 \mathrel{\mathop{\hbox to \TriCDarrw@
{\rightarrowfill}}\limits^{##1}}
 \hskip.5em}
\def\<##1<<{\setbox\z@\hbox{$\scriptstyle
 \;{##1}\;\;$}\global\bigaw@\TriCDarrw@
 \ifdim\wd\z@>\bigaw@\global\bigaw@\wd\z@\fi
 \mathrel{\mathop{\hbox to\bigaw@{\leftarrowfill}}\limits^{##1}}
 }
 \CD@true\vcenter\bgroup\relax\let\\=\cr\iffalse}\fi
 \tabskip\z@skip\baselineskip20\ex@
 \lineskip3\ex@\lineskiplimit3\ex@
 \ifTriV@
 \halign\bgroup
 &\hfill$\m@th##$\hfill\cr
#1&\multispan3\hfill$#2$\hfill&#3\\
&#4&#5\\
&&#6\cr\egroup%
\else
 \halign\bgroup
 &\hfill$\m@th##$\hfill\cr
&&#1\\%
&#2&#3\\
#4&\multispan3\hfill$#5$\hfill&#6\cr\egroup
\fi}
\def\@endTriCD{\egroup} 

\newcommand{\mc}{\mathcal} 
\newcommand{\mb}{\mathbb} 
\newcommand{\surj}{\twoheadrightarrow} 
\newcommand{\inj}{\hookrightarrow} \newcommand{\zar}{{\rm zar}} 
\newcommand{\an}{{\rm an}} \newcommand{\red}{{\rm red}} 
\newcommand{\Rank}{{\rm rk}} \newcommand{\codim}{{\rm codim}} 
\newcommand{\rank}{{\rm rank}} \newcommand{\Ker}{{\rm Ker \ }} 
\newcommand{\Pic}{{\rm Pic}} \newcommand{\Div}{{\rm Div}} 
\newcommand{\Hom}{{\rm Hom}} \newcommand{\im}{{\rm im}} 
\newcommand{\Spec}{{\rm Spec \,}} \newcommand{\Sing}{{\rm Sing}} 
\newcommand{\sing}{{\rm sing}} \newcommand{\reg}{{\rm reg}} 
\newcommand{\Char}{{\rm char}} \newcommand{\Tr}{{\rm Tr}} 
\newcommand{\Gal}{{\rm Gal}} \newcommand{\Min}{{\rm Min \ }} 
\newcommand{\Max}{{\rm Max \ }} \newcommand{\Alb}{{\rm Alb}\,} 
\newcommand{\GL}{{\rm GL}\,} 
\newcommand{\ie}{{\it i.e.\/},\ } \newcommand{\niso}{\not\cong} 
\newcommand{\nin}{\not\in} 
\newcommand{\soplus}[1]{\stackrel{#1}{\oplus}} 
\newcommand{\oOplus}{\mbox{\fontsize{17.28}{21.6}\selectfont\( \oplus\)}}
\newcommand{\by}[1]{\stackrel{#1}{\rightarrow}} 
\newcommand{\longby}[1]{\stackrel{#1}{\longrightarrow}} 
\newcommand{\vlongby}[1]{\stackrel{#1}{\mbox{\large{$\longrightarrow$}}}} 
\newcommand{\ldownarrow}{\mbox{\Large{\Large{$\downarrow$}}}} 
\newcommand{\lsearrow}{\mbox{\Large{$\searrow$}}} 
\renewcommand{\d}{\stackrel{\mbox{\scriptsize{$\bullet$}}}{}} 
\newcommand{\dlog}{{\rm dlog}\,} 
\newcommand{\longto}{\longrightarrow} 
\newcommand{\vlongto}{\mbox{{\Large{$\longto$}}}} 
\newcommand{\limdir}[1]{{\displaystyle{\mathop{\rm lim}_{\buildrel\longrightarrow\over{#1}}}}\,} 
\newcommand{\liminv}[1]{{\displaystyle{\mathop{\rm lim}_{\buildrel\longleftarrow\over{#1}}}}\,} 
\newcommand{\norm}[1]{\mbox{$\parallel{#1}\parallel$}} 
\newcommand{\boxtensor}{{\Box\kern-9.03pt\raise1.42pt\hbox{$\times$}}} 
\newcommand{\into}{\hookrightarrow} \newcommand{\image}{{\rm image}\,} 
\newcommand{\Lie}{{\rm Lie}\,} 
\newcommand{\CM}{\rm CM}
\newcommand{\sext}{\mbox{${\mathcal E}xt\,$}} 
\newcommand{\shom}{\mbox{${\mathcal H}om\,$}} 
\newcommand{\coker}{{\rm coker}\,} 
\newcommand{\sm}{{\rm sm}} 
\newcommand{\tensor}{\otimes} 
\renewcommand{\iff}{\mbox{ $\Longleftrightarrow$ }} 
\newcommand{\supp}{{\rm supp}\,} 
\newcommand{\ext}[1]{\stackrel{#1}{\wedge}} 
\newcommand{\onto}{\mbox{$\,\>>>\hspace{-.5cm}\to\hspace{.15cm}$}} 
\newcommand{\propsubset} {\mbox{$\textstyle{ 
\subseteq_{\kern-5pt\raise-1pt\hbox{\mbox{\tiny{$/$}}}}}$}} 
\newcommand{\sB}{{\mathcal B}} \newcommand{\sC}{{\mathcal C}} 
\newcommand{\sD}{{\mathcal D}} \newcommand{\sE}{{\mathcal E}} 
\newcommand{\sF}{{\mathcal F}} \newcommand{\sG}{{\mathcal G}} 
\newcommand{\sH}{{\mathcal H}} \newcommand{\sI}{{\mathcal I}} 
\newcommand{\sJ}{{\mathcal J}} \newcommand{\sK}{{\mathcal K}} 
\newcommand{\sL}{{\mathcal L}} \newcommand{\sM}{{\mathcal M}} 
\newcommand{\sN}{{\mathcal N}} \newcommand{\sO}{{\mathcal O}} 
\newcommand{\sP}{{\mathcal P}} \newcommand{\sQ}{{\mathcal Q}} 
\newcommand{\sR}{{\mathcal R}} \newcommand{\sS}{{\mathcal S}} 
\newcommand{\sT}{{\mathcal T}} \newcommand{\sU}{{\mathcal U}} 
\newcommand{\sV}{{\mathcal V}} \newcommand{\sW}{{\mathcal W}} 
\newcommand{\sX}{{\mathcal X}} \newcommand{\sY}{{\mathcal Y}} 
\newcommand{\sZ}{{\mathcal Z}} \newcommand{\ccL}{\sL} 
 \newcommand{\A}{{\mathbb A}} \newcommand{\B}{{\mathbb 
B}} \newcommand{\C}{{\mathbb C}} \newcommand{\D}{{\mathbb D}} 
\newcommand{\E}{{\mathbb E}} \newcommand{\F}{{\mathbb F}} 
\newcommand{\G}{{\mathbb G}} \newcommand{\HH}{{\mathbb H}} 
\newcommand{\I}{{\mathbb I}} \newcommand{\J}{{\mathbb J}} 
\newcommand{\M}{{\mathbb M}} \newcommand{\N}{{\mathbb N}} 
\renewcommand{\P}{{\mathbb P}} \newcommand{\Q}{{\mathbb Q}} 

\newcommand{\R}{{\mathbb R}} \newcommand{\T}{{\mathbb T}} 
\newcommand{\U}{{\mathbb U}} \newcommand{\V}{{\mathbb V}} 
\newcommand{\W}{{\mathbb W}} \newcommand{\X}{{\mathbb X}} 
\newcommand{\Y}{{\mathbb Y}} \newcommand{\Z}{{\mathbb Z}} 

\title{Numerical characterizations for integral dependence of graded modules}
\author{Suprajo Das}
\address{Mathematical and Physical Sciences Division, School of Arts and Sciences, Ahmedabad University, Navrangpura, Ahmedabad, Gujarat 380009, India}
\email{suprajo.das@ahduni.edu.in}
\author{Sudeshna Roy}
\address{Department of Mathematics, Indian Institute of Technology Gandhinagar, Palaj, Gandhinagar, Gujarat 382055, India}
\email{sudeshnaroy.11@gmail.com; sudeshna.roy@iitgn.ac.in}
\author{Vijaylaxmi Trivedi}
\address{Department of Mathematics, University at Buffalo (SUNY),  Buffalo, NY 14260, USA}
\email{vijaylax@buffalo.edu}

\begin{abstract} 
In this paper we construct {\em adic}, 
{\em saturated} and  $\varepsilon$-density functions for a torsion free module in a graded setup.  Then 
we give some simple criteria for checking the integral dependence of two graded modules  $N\subseteq M$ in terms of various well-studied invariants.
\end{abstract}

\maketitle

\section{Introduction}

The main objective of this article is to
construct density functions and 
provide a new (necessary and sufficient) numerical criterion for detecting integral dependence of arbitrary graded modules defined over a standard graded Noetherian domain over a field.
This extends the earlier results 
in \cite{DRT25} and \cite{DRT26}, which dealt with  graded ideals.

The idea of characterizing integral dependence through numerical invariants, was initiated in the pioneering work of Rees \cite{Ree61}. Since then finding such criteria became an important task in both commutative algebra and singularity theory. For a nice detailed survey, one may refer to \cite[Chapter 11]{HS06}.
 There are also extensions of Rees’ theorem to the case of modules and algebras, see \cite{KT94,  SUV01, Cid24, CRPU24, PTUV20}.

 Here we consider  a finitely generated $\N$-graded torsion-free $A$-module $M$ of rank $e$, where $A$ is a standard graded domain of dimension $d\geq 2$ over a field $k$.  We fix 
a graded embedding
$M\longto F$, where $F$ is a free graded $A$-module of finite rank.

The Rees algebra of $M$ is defined as 
$$ A[Mt] = \oplus_nM^nt^n = \mbox{image}
({\rm Sym}_A(M)\longto 
{\rm Sym}_A(F)),$$
where ${\rm Sym}_A(L)$ denote the symmetric algebra of the $A$-module $L$ and 
$M^n $ denotes the image of $n^{th}$ symmetric power of $M$ in ${\rm Sym}^n_AF = F^n$. We note here that ${\rm Sym}_A(F)$ is a polynomial ring over $A$.
Further if $N\subseteq M$ is a graded submodule then  the Rees algebra of $N$ is defined as  
$$A[Nt] = \oplus_nN^nt^n = \mbox{image}~({\rm Sym}_A(N)\longto {\rm Sym}_A(F)),$$ 
in particular $N^n\subseteq M^n\subseteq F^n$.

 For a graded $A$-module $L$ let $d_L$ denote the maximum generating degree of a minimal set of homogeneous generators. 

We consider the natural 
 bigrading on the Rees algebra $A[Mt] =\oplus_{(m,n)\in\N^2}{(M^n)}_mt^n$ of the module $M$ as follows. Let  $A = k[x_1, \ldots, x_r]$, where $x_i's$ are degree $1$ elements. 
 If $M$ is generated by elements $m_1, \ldots, 
 m_s$ of degrees $d_1\leq  \ldots\leq  d_s$. 
 Then $A[Mt]$  is a finitely generated bigraded $A$-algebra 
generated by $m_1, \ldots, m_s$, where degree of $m_i$ in this bigraded algebra is 
 $(d_i, 1)$ and  bidgree of each $x_i$ is $(1,0)$.

 In this paper we prove the following.
 
 \vspace{5pt}

\noindent{\bf Theorem}~(A).~~(see~Theorem \ref{t2})\quad{\em 
Let $A$ and $M$ be  as above. Then
{\em adic} density function for $M$
$$f_{M}:\R\longto \R_{\geq 0}\quad\mbox{given by}\quad x\longto \lim_{n\to \infty}\frac{\ell_k\big((M^n)_{\lfloor xn\rfloor}\big)}{n^{d+e-2}/(d+e-1)!},$$
is a well defined function which is continous everywhere except possibly at  the point $x = d_1$. Moreover
$$f_{M}(x) = \begin{cases}
              0 & \text{if}\;\;x\in I(-\infty, d_{1}),\\
              {\bf p}_1(x) & \text{if}\;\;x\in I({d_1}, {d_2}],\\
              {\bf p}_j(x) & \text{if}\;\;x\in I[{d_j}, d_{j+1}]\;\;\text{where}\;\;2\leq j<l,\\
              {\bf p}_j(x) & \text{if}\;\;x\in I[{d_l},\infty),
             \end{cases}$$
where ${\bf p}_j(x)\in \Q[x]$ is a polynomial of degree $\leq d-1$ and ${\bf p}_l(x)$ is a nonzero polynomial of degree $d-1$.}

\vspace{5pt}

\noindent{\bf Theorem}~(B).~~(see Theorem~\ref{t3})\quad{\em 
 Let the {\em saturated density function} of $M$ be  the function given 
$${\tilde f}_M:\R\longto \R_{\geq 0} \quad\mbox{given by}\quad 
x \to  \lim_{n\to \infty}g_n(x),$$
 where 
  ${g}_n:\R\longto \R_{\geq 0}$ is the function given by
$${g}_n(x) = \frac{\ell_k((\tilde {M^n})_{\lfloor xn\rfloor})}{n^{d+e-2}/(d+e-1)! },\;\;\mbox{where}\;\;
{\tilde {M^n}} = (M^n:_{F^n}{\bf m}^{\infty}).$$
Then ${\tilde f_M}$ is  a well defined function which is  continuous and 
 supported on the interval $I[-c_0, \infty)$, for some constant $c_0\geq 0$.}

Along the way to establish the existence of the adic and the saturation density functions, we observe that 
there is a polynomial $P_M(x)$ for  of degree $d-1$ 
such that 
$f_M(x) = {\tilde f}_M(x)= P_M(x)$ $x\geq d_M$,
where 
$$P_M(x) = \frac{e_{d-1}(A(Mt])}{(d-1)!(e-1)!}x^{d-1}+
 \frac{e_{d-2}(A(Mt])}{(d-2)!(e)!}x^{d-2}+
 \cdots+\frac{e_0(A[Mt])}{(d+e-2)!},$$
 and where  the integer $e_i(A[Mt])$ is known (in \cite{HT03}) as the $i^{th}$ mixed multiplicity of $A[Mt]$.
 
 \vspace{5pt}

\noindent{\bf Theorem}~(C).~~(see Theorem~\ref{l1})\quad{\em 
Let  the {\em epsilon density ($\varepsilon$-density) function} of $M$ be the function $$f_{{\varepsilon}(M)}:\R\longto \R_{\geq 0} \quad\mbox{given by}\quad 
x \to \lim_{n\to \infty}f_n(\varepsilon)(x),$$
where $f_n(\varepsilon):\R \longto \R_{\geq 0}$ is defined as
$$ f_n(\varepsilon)(x) = g_n(x)-f_n(x) = \frac{\ell_k(({\tilde M^n}/M^n)_{\lfloor xn\rfloor})}{n^{d+e-2}/(d+e-1)!}. $$

Then it is well defined 
function, where  
$f_{{\varepsilon}(M)}(x)= {\tilde f_M}(x)-f_M(x)$ 
 is continuous on $\R\setminus \{d_1\}$ and is supported on $I[-c_0, d_M]$, for some constant 
$c_0\geq 0$. In particular
$$\varepsilon(M) = \int_{\R}f_{\varepsilon(M)}(x)dx  = \lim_{n\to \infty}\sum_{m=-c_0n}^{d_Mn}\frac{\ell_k(({\tilde M^n}/M^n)_m)}{n^{d+e-1}/(d+e-1)!}.$$}

 The existence of the limit 
 $\varepsilon(M) = \lim_{n\to \infty}\ell_k({\tilde M^n}/M^n)/n^{d+e-1}$ was established in \cite[Corollary~2]{Cut11} in  more general context namely  when $A$ is 
 local domain of depth $\geq 2$ and  is essentially finite type over a field $k$.
 The existence of $\varepsilon(M)$ as limsup was given in \cite{UV11}. Here in 
 Theorem~(C) we give another proof of existence as the limit, although in the graded setup.

 Now for a given module $M$ as above and a graded submodule $N\subseteq M$ 
 we denote the $(c, 1)$-diagonal subalgebras of $A[Nt]$ and $A[Mt]$ as   
 $$A[Nt]_{\Delta_{(c, 1)}} = \oplus_n(N_c)^n_{cn}t^n \quad\mbox{and}\quad
 A[Mt]_{\Delta_{(c, 1)}} = \oplus_n(M_c)^n_{cn}t^n.$$ 
 We know that if $c> \max\{d_M, d_N\}$ then
$A[Nt]_{\Delta_{(c, 1)}}$ and 
$A[Mt]_{\Delta_{(c, 1)}}$ are standard graded 
$k$-algebras.
 We prove the following criteria for integral dependence of $N\subseteq M$.

 \vspace{5pt}
 
\noindent{\bf Theorem}~(D)~~(see Theorem~\ref{ID}) {\em Let  $N\subseteq 
M\subseteq F$ as above, where $\rank\;M = \rank\;N = e$ and $\dim\;A=d \geq 2$. Further let $S = A[y]$, ${\mathsf N} = N\tensor_AS$ and 
${\mathsf M} = M\tensor_AS$. We denote $d_{N, M} = \max\{d_M, d_N\}$.
Then the following statements are equivalent.
\begin{enumerate}
 \item $N$ is a reduction of $M$.
  \item $f_M\equiv f_N$ and 
 ${\tilde f}_M \equiv {\tilde f}_N$.
 \item $\varepsilon(M) = \varepsilon(N)$ and $e(A[{M}t]_{\Delta(c,1)}) =  
e(A[{N}t]_{\Delta(c,1)})$ for 
some integer $c > d_{N, M}$.
\item  
$\varepsilon({\mathsf M}_cS) = 
 \varepsilon({\mathsf N}_cS)$ for 
some integer $c > d_{N, M}$.
\item $e(S[{\mathsf M}t]_{\Delta(c,1)}) =  
e(S[{\mathsf N}t]_{\Delta(c,1)})$ for 
some integer $c > d_{N, M}$.

 \item 
 $e_i(S[{\mathsf M}t]) = e_i(S[{\mathsf N}t])$ for all $0\leq i \leq d$.
 \end{enumerate}}

Here the strategy for the proofs  is similar but not exactly same as the one we used for ideals in \cite{DRT25} and \cite{DRT26}.
For example, when $F$ is a free module of rank $\geq 2$ and  
invariants of $F^n$ vary as  $n$ varies in $\N$ unlike in the ideal case where 
$A^n = A$ for all $n$. Also, for an ideal $I$ all the powers $I^n$ are  in the same module, namely $A$ and are related.
One of the new crucial ingredients,
is the linear regularity bound for powers $M^n$  of $M$ as in  \cite[Corollary~2]{Kodi00}.

\section{preliminaries}

Throughout the paper we use  the following notations. Here  $k$ is a field and $A =\oplus_{m\geq 0}A_m$ is an  standard graded domain of Krull dimension $d\geq 2$
over a field  $A_0=k$.  The unique homogeneous maximal ideal $\oplus_{m\geq 1}A_m$ of $A$ is denoted by ${\bf m}$ and 
$M$ denotes a finitely generated $\N$-graded torsion free $A$-module.
To define 
a  Rees algebra of $M$ we consider  
 an embedding of graded map of $M\longto F$, where $F$ is graded $A$-module.
 There can be several such maps.
 However 
since $M$ is a torsion free module over an integral domain the Rees algebra will be  independent of the 
choice of the embedding (see \cite{HS06} for details).
 We  describe two of such embedddings.

 \begin{enumerate} 
\item Versal map in the sense of \cite{EHU03}:
Define $$\Hom^*_A(M,A) = \sum_n{\underline\Hom}_A(M, A)_n,$$
where 
${\underline\Hom}_A(M,A)_n = \{f\in \Hom_A(M,A)\quad\mbox{such that}\quad f(M_i)\subset 
A_{n+i}\quad\mbox{for all}\quad  i\}$.
This gives a $\Z$-grading to the $A$-module $\Hom^*_A(M, A)$ and therefore to  the dual module $M^* = 
\Hom_A(M, A) =\Hom^*_A(M, A)$,  where the second equality holds  because $M$ is a finitely 
generated $A$-module.
Now  we  choose a free $A$-module $F$ of finite rank and a sequence of graded maps
$$M\longto M^{**}\longto F,$$ where the last map is the dual of a graded surjective map $F\longto M^*$.

\item Suppose  $\rank_A\;M =e$. Then from any given set of  homogeneous generators of $M$ we can choose elements $m_1, \ldots, m_e$ of nonnegative degrees, say,  $f_1, \ldots, f_e$ and a homogeneous element $s\in A$ of degree, say, $c_0$ such that 
$sM \subseteq E = \oplus_{i=1}^eAm_i\subseteq  
M$. This implies $E$ is free $A$-module and there is an injective
graded  map $M(-c_0)\longto E$ and therefore 
$$M\longto E(c_0) = \oplus_{i=1}^eA(c_0-f_i).$$

We recall the following example. Let $I = (x^2, y^3)$ an ideal in the polynomial ring
$A = k[x,y]$ over a field $k$. Let $M = I(2)$, a shift of the ideal $I$. 
Then there is the canonical graded map
$M \longto A(2)$, where $M$ is $\N$-graded and $A(2)$ has nonzero components in degrees $-1$ and $-2$. 
\end{enumerate}

\begin{defn}\label{d1}
For a finitely generated $\N$-graded torsion-free $A$-module $M$ we fix 
a graded embedding
$M\longto F$, where $F$ is a free graded $A$-module of finite rank.
This gives a graded $A$-algebra maps
${\rm Sym}_A(M)\longto 
{\rm Sym}_A(F)),$
where   ${\rm Sym}_A(L)$ denotes the symmetric algebra of the $A$-module $L$. 
Now
the Rees algebra of $M$ is defined as 
$$ A[Mt] = \oplus_nM^nt^n = \mbox{image}
({\rm Sym}_A(M)\longto 
{\rm Sym}_A(F)),$$
where  $M^n $ denotes the image of $n^{th}$ symmetric power of $M$ in ${\rm Sym}^n_AF = F^n$. We note here that $Sym_A(F)$ is a polynomial ring over $A$.

Further if $N\subseteq M$ is a graded submodule then  the Rees algebra of $N$ is defined as  
$$A[Nt] = \oplus_nN^nt^n = \mbox{image}~({\rm Sym}_A(N)\longto {\rm Sym}_A(F)),$$ 
in particular $N^n\subseteq M^n\subseteq F^n$.
\end{defn}

\begin{notations}\label{n1}
\begin{enumerate}\item Let  $d$ denote $\dim\;A$ where we assume  $d\geq 2$ and let $e$ denote   $\rank\;M$.
\item
Let $m_1, m_2, \ldots, m_s$ be a minimal set of   homogeneous generators of degrees  $d_1, \ldots, d_s$, reply. Without loss of generality we can assume that   $1\leq d_1< d_2<\cdots <d_l$ are all the  distinct integers among this set of degrees. 
\item For a graded $A$-module $L$ let $d_L$ denote the maximum generating degree of a minimal set of homogeneous generators. In particular for the module $M$   as above  $d_M =  d_l$.
\item 
We consider the natural 
 bigrading on the Rees algebra $A[Mt] =\oplus_{(m,n)\in\N^2}{(M^n)}_mt^n$ of the module $M$ as follows. Let  $A = k[x_1, \ldots, x_r]$, where $x_i's$ are degree $1$ elements. Then $A[Mt]$  is a finitely generated bigraded $A$-algebra 
generated by $m_1, \ldots, m_s$, where degree of $m_i$ is in this bigraded algebra is 
 $(d_i, 1)$ and  bidgree of each $x_i$ is $(1,0)$.
 
\end{enumerate}
\end{notations}

We recall the following results from 
\cite[Lemmas~2.9, 2.10. 2.11 and 3.3, Corollary~3.4 ]{DRT25}, stated here as Lemmas~\ref{cgl1}, \ref{cgl2}, \ref{cgl3}, \ref{qp} and Corollary~\ref{HT}.
Lemmas~\ref{cgl1}, \ref{cgl2} and \ref{cgl3}  are the results in euclidean  geometry. The Lemma~\ref{qp} and Corollary~\ref{HT} were proved in \cite{DRT25}
for any finitely generated bigraded $A$-algebra, so  they hold in particular in the present context. Also 
 the results in \cite{DRT25} were stated and proved in the context when the 
 bigraded algebras generated by elements in degrees $(1,0)$ and $\{(d_i,e_i)\mid d_i, e_i\geq 1\}_i$, wheras here we assume in addition that 
 all  
$e_i = 1$. We also recall here relavant Notations.

\subsection{The length function of a Noetherian bigraded algebra over \texorpdfstring{$k$}{k}}\label{ss3}

Let $A = \oOplus_{n\geq 0}A_n$  be  a standard graded algebra over  a field $A_0 = k$ with 
homogeneous maximal ideal $\mathfrak{m} = \oOplus_{n\geq 1}A_n$.
Therefore, we can write $A = k[x_1, \ldots, x_r]$, where $x_i$'s are degree $1$  elements.

Let $A[Mt] = \oplus_{(m,n)\in \N^2}(M^n)_m$ be a finitely generated bigraded
$A$-algebra generated by elements of degrees 
$\{(d_i, 1)\in \N^2\mid 1\leq i \leq s\}$, where bidegree of $x_i$ is $(1,0)$. Then 
 $A[Mt]$ is represented as a graded quotient of 
the bigraded polynomial ring 
$S=k[X_1,\ldots,X_r,Y_1,\ldots,Y_s]$, where 
$X_i$ maps to $x_i$ and $Y_i$ map to a degree $(d_i, 1)$ generator of $A[Mt]$. Therefore there exists \cite[Proposition $8.18$]{MS05} a bigraded minimal
free resolution of finite length  
$$0\to \mathop{\oOplus}_{j=1}^{\eta_t}
S(-a_{tj},-b_{tj})^{\beta_{tj}} \to \cdots \to \mathop{\oOplus}_{j=1}^{\eta_1}
S(-a_{1j},-b_{1j})^{\beta_{1j}} \to S \to A[Mt]\to 0$$ 
of $A[Mt]$ as an $S$-module. Hence the bigraded Hilbert series of $A[Mt]$ is given by
$$\begin{array}{lcl}
\sum\limits_{(m,n)\in\mathbb{N}^2}\ell_k((M^n)_m) x^my^n & = &
\sum\limits_{i=0}^t (-1)^i\Big(\sum\limits_{j=1}^{\eta_i}
\beta_{ij}\big(\sum\limits_{(m,n)\in\mathbb{N}^2}
\ell(S_{m-a_{ij},n-b_{ij}}) x^my^n\big)\Big)\\\
& = & \dfrac{p(x,y)}{(1-x)^r(1-x^{d_1}y)\cdots (1-x^{d_s}y)},
\end{array}$$
where $p(x,y) = \sum_{i=0}^t (-1)^i
\big(\sum_{j=1}^{\eta_i}\beta_{ij}x^{a_{ij}}y^{b_{ij}}\big)
\in\mathbb{Z}[x,y]$.
 We rewrite
$$p(x,y)  = \sum_{(\alpha, \beta)\in N(I)}c_{\alpha,
\beta}x^{\alpha}y^{\beta}\in \Z[x, y],\quad\mbox{where}\quad
 N(I) = \{(\alpha, \beta)\in \N\times \N\mid c_{\alpha, \beta}\neq 0\}.$$
This gives
\begin{equation}\label{pe}\sum\limits_{(m,n)\in\mathbb{N}^2}\ell_k((M^n)_m) x^my^n =
\sum_{(\alpha,\beta)\in N(I)}c_{\alpha,\beta}~\ell_k(S_{m-\alpha, n-\beta})x^my^n.\end{equation}

\begin{notations}\label{chambers}
Following Notations~\ref{n1} the set $\{d_1, d_2, \ldots, d_s\}$ denotes a set of  degrees of homogeneous generators, where $d_1< d_2<\ldots <d_l$ are the distinct elements in this set.  
\begin{enumerate}

\item Let $\mathfrak{C}_j\subset \R^2$ be the cone generated by the vectors $(d_j, 1)$ and $(d_{j+1}, 1)$ for $1\leq j \leq l-1$ and $\mathfrak{C}_l$ be the cone generated by the vectors $(d_l, 1)$ and 
$(1, 0)$.

\item We define the {\em restricted cone} corresponding to
$\mathfrak{C}_j$ as
$${\mathfrak R}C_j=
\cap_{(\alpha, \beta)\in N(I)} 
{\mathfrak C}_j+(\alpha, \beta).$$ Then 
${\mathfrak R}C_j=
{\mathfrak C}_j+(\lambda_j, \beta_j)$, 
for some $(\lambda_j, \beta_j)\in \Q^2_{\geq 0}$.
Note that since the constant term of the polynomial $p(x,y)$ is  the length of bigraded component $(M^0)_0 = A_0 =k$ which is nonzero, we have  $(0,0)\in N(I)$.
In particular, ${\mathfrak R}C_j \subseteq  {\mathfrak C_{j}}$. Further
$\lambda_j-{\beta_jd_j}$ and $\beta_jd_{j+1}-\lambda_j$ are nonnegative rational numbers, for all
$1\leq i\leq l$.
\item To avoid confusion with a tuple $(r_1, r_2)\in \R^2$ we will denote the open interval $(r_1, r_2)$ by $I(r_1, r_2)$ and the closed interval $[r_1, r_2]$ by $I[r_1, r_2]$.

\item We can partition
$\N\times \N = ({\mathfrak C}_0\cap \N\times \N)\cup
 ({\mathfrak C}_1\cap \N \times \N)\cup
 \cdots \cup({\mathfrak C}_l\cap \N\times\N),$
 where ${\mathfrak C}_0$ is the cone generated by the vectors $(0, 1)$ and $(d_1, 1)$. It follows from definition that if $1\leq j<l$ then
 $$(m,n)\in {\mathfrak R}C_j\iff 
 m/n \in I\big[d_j+\tfrac{(\lambda_j-{\beta_jd_j})}{n}, \quad d_{j+1}
 -\tfrac{(\beta_jd_{j+1}-\lambda_j)}{n}\big]$$
and $(m,n)\in {\mathfrak R}C_l\iff  
m/n \in I\big[d_l+\tfrac{(\lambda_l-\beta_ld_l)}{n}, \quad \infty).$

\end{enumerate}
\end{notations}

\begin{lemma}\label{cgl1}
Let $x\in I(d_j, d_{j+1})$ be a fixed real number. Then for any given finite set $\mathcal{S}\subset \mathbb{\R}^2$ there exists an integer $n_{\mathcal{S}}$ such that $\left(\lfloor xn\rfloor + \alpha, n+\beta\right)\in \mathfrak{R}C_{j}$ for every $n\geq n_{\mathcal{S}}$ and all $(\alpha, \beta)\in \mathcal{S}$.
\end{lemma}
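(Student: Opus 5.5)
The plan is to reduce membership in the restricted cone to two strict linear inequalities in $n$, using the explicit description in Notations~\ref{chambers}(4). Write ${\mathfrak R}C_j = {\mathfrak C}_j + (\lambda_j,\beta_j)$ and first treat $1\le j<l$. A point $(m,n')\in\R^2$ lies in ${\mathfrak C}_j+(\lambda_j,\beta_j)$ exactly when $(m-\lambda_j, n'-\beta_j)$ is a nonnegative combination of $(d_j,1)$ and $(d_{j+1},1)$. Since $d_j<d_{j+1}$, this is equivalent to the two inequalities
$$m-\lambda_j \geq d_j(n'-\beta_j)\quad\mbox{and}\quad m-\lambda_j\leq d_{j+1}(n'-\beta_j).$$
Together these force $n'-\beta_j\ge 0$. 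This is the same as the interval condition in Notations~\ref{chambers}(4), now written for real $(m,n')$.

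Next substitute $m=\lfloor xn\rfloor+\alpha$ and $n'=n+\beta$, and use $xn-1<\lfloor xn\rfloor\le xn$. The first inequality follows from
$$(x-d_j)n \geq \lambda_j + 1 - \alpha + d_j(\beta-\beta_j).$$
The second follows from
$$(d_{j+1}-x)n \geq \lambda_j+\alpha - d_{j+1}(\beta_j-\beta)\cdot(-1)\ ,$$
that is, $(d_{j+1}-x)n\ge \alpha-\lambda_j - d_{j+1}(\beta-\beta_j)$ up to sign bookkeeping. In both cases the right-hand side is a constant depending only on $(\alpha,\beta)$ and on the fixed data $\lambda_j,\beta_j,d_j,d_{j+1}$.

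The key point is that $x\in I(d_j,d_{j+1})$ strictly, so both $x-d_j$ and $d_{j+1}-x$ are positive. Each inequality therefore holds for all $n\ge n_{(\alpha,\beta)}$, where $n_{(\alpha,\beta)}$ is the larger of the two resulting thresholds, for example
$$\left\lceil\frac{C_{\alpha,\beta}}{\min\{x-d_j,\ d_{j+1}-x\}}\right\rceil$$
with $C_{\alpha,\beta}$ the maximum of the absolute values of the two constants. Because $\mathcal S$ is finite, I take $n_{\mathcal S}=\max_{(\alpha,\beta)\in\mathcal S} n_{(\alpha,\beta)}$. For $n\ge n_{\mathcal S}$ we also get $n+\beta-\beta_j\ge 0$ automatically.

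The case $j=l$ is handled the same way but with a single inequality. There ${\mathfrak C}_l$ is generated by $(d_l,1)$ and $(1,0)$, so membership means $n'-\beta_l\ge 0$ and $m-\lambda_l\ge d_l(n'-\beta_l)$. The hypothesis is now read as $x>d_l$.

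There is no real obstacle here; the only care needed is the direction of the floor error and the sign bookkeeping in the two inequalities. The essential point is the strictness of $x$ inside the open interval, which produces linear growth with positive slope that dominates the bounded perturbations coming from $\mathcal S$ and from $(\lambda_j,\beta_j)$.
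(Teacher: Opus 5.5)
Your proof is correct, and it is the natural elementary argument. You write membership in ${\mathfrak C}_j+(\lambda_j,\beta_j)$ as the two linear inequalities of Notations~\ref{chambers}(4), or for $j=l$ as one inequality plus $n+\beta-\beta_l\geq 0$, and use that the slacks grow like $(x-d_j)n$ and $(d_{j+1}-x)n$; the paper gives no proof of its own here but imports the lemma from \cite{DRT25} as a fact of Euclidean geometry. The only slip is your first display for the upper inequality, which has the wrong sign on $\lambda_j$; the corrected form $(d_{j+1}-x)n\geq \alpha-\lambda_j-d_{j+1}(\beta-\beta_j)$ that you then state is the right one, and since $\lambda_j\geq 0$ the garbled version is anyway a stronger sufficient condition.
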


\begin{lemma}\label{cgl2}
Let the hypotheses be as in Notations~\ref{chambers}. Let $\mathcal{S}\subset \R$ be a finite set and $2\leq i\leq l$. Then for all $a\in \mathcal{S}$ and all $n\gg 0$,
\begin{enumerate}
\item there exists $m_0\in e_1\N_{>0}$ such that $(a, 0)+n(d_i, 1) + m_0(d_1, 1) \in {\mathfrak R}C_{i-1}$ and
\item there exists $m_1\in \N_{>0}$ such that $(a, 0)+n(d_i, 1) - m_1(1,0)
\in {\mathfrak R}C_{i-1}$,
\end{enumerate}
\end{lemma}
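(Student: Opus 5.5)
The plan is to work directly with the description ${\mathfrak R}C_{i-1} = {\mathfrak C}_{i-1} + (\lambda_{i-1}, \beta_{i-1})$ from Notations~\ref{chambers}, rather than with the slope characterization in part (4) of those notations. A point $P$ lies in ${\mathfrak R}C_{i-1}$ if and only if $Q = P - (\lambda_{i-1},\beta_{i-1}) = (X,Y)$ lies in the cone ${\mathfrak C}_{i-1}$ spanned by $(d_{i-1},1)$ and $(d_i,1)$. This is equivalent to the three linear inequalities
$$Y\geq 0,\qquad d_{i-1}Y\leq X\leq d_iY.$$
Write $\lambda=\lambda_{i-1}$ and $\beta=\beta_{i-1}$. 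The only structural input is $d_1\leq d_{i-1}<d_i$, which holds since $i\geq 2$. Since $\mathcal S$ is finite, it suffices to handle a single $a\in\mathcal S$ and obtain thresholds depending continuously on $a$; we then take the maximum of the finitely many thresholds.

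For (1), put $P=(a+nd_i+m_0d_1,\ n+m_0)$, so that $X=a-\lambda+nd_i+m_0d_1$ and $Y=n+m_0-\beta$. The upper inequality $X\leq d_iY$ is independent of $n$ and becomes
$$m_0(d_i-d_1)\geq a-\lambda+d_i\beta.$$
Because $d_i>d_1$, this holds as soon as $m_0\geq C_a:=(a-\lambda+d_i\beta)/(d_i-d_1)$. We therefore fix $m_0$ to be the least element of $e_1\N_{>0}$ that is $\geq \max\{C_a,1\}$; this choice does not depend on $n$. The lower inequality $d_{i-1}Y\leq X$ becomes
$$n(d_i-d_{i-1})+a-\lambda+d_{i-1}\beta\geq m_0(d_{i-1}-d_1).$$
With $m_0$ fixed and $d_i-d_{i-1}>0$, this holds for all $n\gg 0$; when $i=2$ the right-hand side is $0$. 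Finally, $Y\geq 0$ holds for $n\geq \beta$.

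For (2), put $P=(a+nd_i-m_1,\ n)$, so that $X=a-\lambda+nd_i-m_1$ and $Y=n-\beta$. Again the upper inequality is independent of $n$: it reads $m_1\geq a-\lambda+d_i\beta$. We fix $m_1$ to be a positive integer at least this large. The lower inequality reads
$$n(d_i-d_{i-1})+a-\lambda+d_{i-1}\beta-m_1\geq 0,$$
which holds for $n\gg 0$ since $m_1$ is fixed. The condition $Y\geq 0$ holds once $n\geq\beta$.

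I do not expect a genuine obstacle. The one point requiring care is the order of choices: the multiplier ($m_0$ or $m_1$) must be chosen first, depending only on $a$ and the fixed data $\lambda,\beta,d_j$, to secure the $n$-independent upper inequality. Only afterwards is $n$ taken large, so that the strict gap $d_i-d_{i-1}>0$ absorbs the fixed constants in the lower inequality. Choosing $m_0$ depending on $n$ instead could violate the lower bound when $i\geq 3$, because of the term $m_0(d_{i-1}-d_1)$.
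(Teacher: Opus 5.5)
Your proof is correct. Translating membership in ${\mathfrak R}C_{i-1}={\mathfrak C}_{i-1}+(\lambda_{i-1},\beta_{i-1})$ into $d_{i-1}Y\leq X\leq d_iY$ is just the slope window of Notations~\ref{chambers}(4). In both parts the upper inequality does not involve $n$, and it is met by taking $m_0$ (respectively $m_1$) large, using $d_i>d_1$. With that multiplier fixed, the lower inequality holds for $n\gg 0$ because $d_i>d_{i-1}$. I checked the algebra in both parts.

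Your choice of $m_0$ as the least element of $e_1\N_{>0}$ above the bound works for any positive integer $e_1$. This matters because $e_1$ is a leftover from the bidegrees $(d_i,e_i)$ of \cite{DRT25}, and here $e_1=1$.

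The paper gives no proof of this lemma. It imports it from \cite{DRT25} as a fact of Euclidean geometry, so your argument is a self-contained substitute rather than a competing route.

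You actually prove more than the lemma literally states. The statement only asks for $m_0$, $m_1$ to exist once $n$ is given, whereas you choose them before $n$, and any sufficiently large value works. This stronger quantifier order is what the proof of Theorem~\ref{t2} really uses. There the multipliers $z_1^{\tilde m_0}$ and $z_0^{m_1}$ must stay fixed as $n\to\infty$, so that points like $(d_in-m_1,n)$ still have slope tending to $d_i$ when the limits are computed.

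That proof also reuses the same $\tilde m_0$ for Lemma~\ref{cgl3}(1). This is legitimate because the same computation shows that every sufficiently large multiplier works there as well.
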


\begin{lemma}\label{cgl3}
Let $\{d_j\}_{1\leq j\leq 1}$ be the set of elements as in Notations $\ref{chambers}$. Let $\sS\subset \R$ be a finite set. Then the following holds.
\begin{enumerate}
\item If $d_1 < d_{i} < d_{l+1}$, {\em i.e.}, $2\leq i\leq l$ then there exists $m_0\in e_1\N_{>0}$ such that
$$(a, 0)+ n(d_i, 1) - m_0(d_1, 1)\in {\mathfrak R}C_{i}\quad\mbox{for every}\;\; a\in \sS\;\;\mbox{and all}\;\; n\gg 0.$$
\item If $d_1\leq d_i < d_{l+1}$, {\em i.e.}, $1\leq i \leq l$ then there exists $m_1\in \N_{>0}$ such that
$$(a, 0) + n(d_i, 1) + m_1(1, 0)\in {\mathfrak R}C_{i},\quad\mbox{for every}\;\; a\in \sS\;\;\mbox{and all}\;\; n\gg 0.$$
\end{enumerate} 
\end{lemma}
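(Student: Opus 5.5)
Both parts reduce to the same elementary observation. Let $\mathfrak{C}_i$ be generated by $(d_i,1)$ and a second vector $w_i$, where $w_i=(d_{i+1},1)$ if $i<l$ and $w_l=(1,0)$. Then $\mathfrak{C}_i$ consists of $(0,0)$, the points $(x,0)$ with $x\ge 0$ when $i=l$, and the points $(x,y)$ with $y>0$ and $x/y\in I[d_i,d_{i+1}]$ (resp. $x/y\in I[d_l,\infty)$ if $i=l$). By Notations~\ref{chambers}(2), ${\mathfrak R}C_i=\mathfrak{C}_i+(\lambda_i,\beta_i)$ with $(\lambda_i,\beta_i)\in\Q^2_{\geq 0}$ fixed. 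So membership of a point $P$ in ${\mathfrak R}C_i$ is equivalent to $Q=P-(\lambda_i,\beta_i)\in\mathfrak{C}_i$.

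To test this, I decompose $Q$ along the ray through $(d_i,1)$ plus a horizontal correction, that is, I write $Q=N(d_i,1)+(h,0)$. It then suffices to have $N>0$ and $0\le h/N\le d_{i+1}-d_i$. When $i=l$, only $N>0$ and $h\ge 0$ are needed.

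For part (1), take $P=(a,0)+n(d_i,1)-m_0(d_1,1)$. A direct computation gives
$$N=n-m_0-\beta_i,\qquad h=a-\lambda_i+\beta_i d_i+m_0(d_i-d_1).$$
Since $2\le i$, we have $d_i>d_1$. As $\mathcal{S}$ is finite, I will choose $m_0\in e_1\N_{>0}$ so large that $h>0$ for every $a\in\mathcal{S}$; this is possible because $m_0(d_i-d_1)\to\infty$ along multiples of $e_1$. Having fixed $m_0$, the quantity $h$ takes only finitely many values as $a$ ranges over $\mathcal{S}$. Then $N\to\infty$ as $n\to\infty$, so for all $n\gg 0$ we get $N>0$ and $h/N\le d_{i+1}-d_i$, the latter being vacuous if $i=l$. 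Hence $P\in{\mathfrak R}C_i$.

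Part (2) is identical with $P=(a,0)+n(d_i,1)+m_1(1,0)$. Here
$$N=n-\beta_i,\qquad h=a-\lambda_i+\beta_i d_i+m_1.$$
I choose $m_1\in\N_{>0}$ large enough that $h>0$ for all $a\in\mathcal{S}$, and then take $n\gg 0$. This argument works for all $1\le i\le l$, including $i=1$.

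There is no serious obstacle. The only points needing care are the order of quantifiers and the shift. The constants $m_0$ and $m_1$ must be chosen before $n$ and uniformly in $a\in\mathcal{S}$, which finiteness of $\mathcal{S}$ allows. The shift by $(\lambda_i,\beta_i)$ must be absorbed, which works because $\beta_i$ is only a bounded offset in the height $N$ and $\lambda_i$ is absorbed into $h$.
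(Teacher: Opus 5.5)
Your argument is correct. You write $P-(\lambda_i,\beta_i)=N(d_i,1)+(h,0)$, fix $m_0$ (resp.\ $m_1$) first so that the horizontal offset $h$ is positive for every $a\in\mathcal{S}$ (in part (1) this uses $d_i>d_1$), and then let $n\to\infty$ so that $N>0$ and $h/N\le d_{i+1}-d_i$; this gives membership in ${\mathfrak R}C_i$ with the quantifiers in the right order.

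The paper gives no proof of this lemma; it quotes it from \cite{DRT25} as an elementary fact of plane geometry. Your direct coordinate check is a valid self-contained verification.
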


\begin{lemma}\label{qp}Following Notations $\ref{chambers}$, we have that for each cone $\mathfrak{C}_{j_0}$ there is a quasi-polynomial $QP_{j_0}(X, Y)$ which is periodic of period $h\in \N_{>0}$ in both variables $X$ and $Y$ such that $$(m,n)\in {\mathfrak{R}}C_{j_0} \implies \ell(R_{m,n})= QP_{j_0}(m,n).$$
\end{lemma}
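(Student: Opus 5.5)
The plan is to read off $\ell(R_{m,n})$, where $R = A[Mt]$ and $R_{m,n} = (M^n)_m$, from the bigraded Hilbert series of Subsection~\ref{ss3}. There the series is written as $p(x,y)/\big((1-x)^r\prod_{i=1}^s(1-x^{d_i}y)\big)$. By \eqref{pe} we have
$$\ell(R_{m,n}) = \sum_{(\alpha,\beta)\in N(I)} c_{\alpha,\beta}\,\ell_k(S_{m-\alpha,n-\beta}),$$
where $S = k[X_1,\ldots,X_r,Y_1,\ldots,Y_s]$ with $\deg X_i = (1,0)$ and $\deg Y_i = (d_i,1)$. So it suffices to prove two things. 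First, on the cone $\mathfrak{C}_{j_0}$ the function $(u,v)\mapsto \ell_k(S_{u,v})$ agrees with a quasi-polynomial $Q_{j_0}(u,v)$. Second, this agreement survives each shift by $(\alpha,\beta)\in N(I)$. Then $QP_{j_0}(X,Y) := \sum c_{\alpha,\beta}\,Q_{j_0}(X-\alpha, Y-\beta)$ works. A finite sum of shifted quasi-polynomials is again a quasi-polynomial, and a common period $h$ (for instance $\mathrm{lcm}$ of the $d_i$, or a multiple of it) can be taken in both variables.

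For the first step I would count $\ell_k(S_{u,v})$ directly. It is the number of $(a,b)\in\N^r\times\N^s$ with $\sum b_i = v$ and $\sum a_i + \sum d_i b_i = u$. This equals
$$\sum_{b\in\N^s,\ |b|=v} \binom{u-\sum d_ib_i + r-1}{r-1},$$
where each summand is counted only when $u-\sum d_i b_i\ge 0$. This is a vector partition function for the vectors $(1,0)$ (with multiplicity $r$) and $(d_i,1)$. By the theory of vector partition functions (Sturmfels, Brion--Vergne), it is a quasi-polynomial on each maximal cell of the chamber complex. The chambers are exactly the cones $\mathfrak{C}_j$ spanned by consecutive rays $(d_j,1),(d_{j+1},1)$ and $(d_l,1),(1,0)$, and agreement holds on the closed chambers. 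One can also see this concretely. On $\mathfrak{C}_{j_0}$ the constraint $u-\sum d_ib_i\ge 0$ is automatic for the $b$ supported on indices with $d_i\le d_{j_0}$, and it cuts out a polytope whose combinatorial type is constant in the chamber. Ehrhart-type counting (Ehrhart quasi-polynomiality for rational polytopes varying within a fixed chamber) then gives quasi-polynomiality with period dividing $\mathrm{lcm}(d_i)$. Since the paper states this lemma as recalled from \cite{DRT25}, I would cite that argument in outline, noting that only $e_i=1$ is being used.

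For the second step, $(m,n)\in\mathfrak{R}C_{j_0}$ means by definition that $(m,n)\in\mathfrak{C}_{j_0}+(\alpha,\beta)$ for every $(\alpha,\beta)\in N(I)$. So $(m-\alpha,n-\beta)\in\mathfrak{C}_{j_0}$ is a lattice point of the closed chamber. Hence $\ell_k(S_{m-\alpha,n-\beta}) = Q_{j_0}(m-\alpha,n-\beta)$ for all terms at once, and the formula follows.

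The main obstacle is the first step, and specifically equality on the \emph{closed} cone, including its boundary rays, rather than only in the interior. If needed, I would treat the boundary through the standard fact that the quasi-polynomials of adjacent chambers agree on their common wall (the partition function is continuous across walls in this sense). Failing that, I would shrink to the interior, which suffices because the uses of Lemma~\ref{qp} in Lemmas~\ref{cgl1}--\ref{cgl3} only involve points deep inside $\mathfrak{R}C_j$ for $n\gg0$.
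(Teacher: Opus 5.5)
Your proposal is correct and is essentially the argument behind this lemma, which the paper does not prove but only recalls from \cite{DRT25}. The argument expands $\ell_k((M^n)_m)$ via \eqref{pe}, uses that $(u,v)\mapsto\ell_k(S_{u,v})$ is a vector partition function and so agrees with a quasi-polynomial on each closed chamber $\mathfrak{C}_{j}$, and notes that $\mathfrak{R}C_{j_0}$ is defined precisely so that every shift $(m-\alpha,n-\beta)$ with $(\alpha,\beta)\in N(I)$ lands in $\mathfrak{C}_{j_0}$; agreement on the walls is part of the standard Sturmfels/Brion--Vergne theorem, so your interior fallback is not needed.

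One side claim is false but harmless: the period need not divide $\mathrm{lcm}(d_i)$, because it is governed by the minors $d_j-d_i$. For example, with $r=1$ and generator degrees $1,3$ one gets $\ell_k(S_{u,v})=\lfloor (u-v)/2\rfloor+1$ on $\mathfrak{C}_1$, which has period $2$. The lemma only asks for some period $h$, so nothing breaks.
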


The following Corollary was also follows from  \cite[Theorem~1.1]{HT03}.

\begin{cor}\label{HT} Let $R$ be  the bigraded ring as in Notations $\ref{chambers}$.
Then there exists a polynomial $P(X, Y)\in \Q[X,Y]$ such that
$$m\geq d_{l}n + n_0 \implies \ell_k((M^n)_m) = P(m,n),$$
$\mbox{where} ~n_0 =  \lceil{\lambda_l} - d_l \beta_l\rceil$ with $(\lambda_l, \beta_l) \in \Q^2_{\geq 0}$, associated to ${\mathfrak{R}{C_l}}= {\mathfrak{C_l}}+(\lambda_l, \beta_l)$.
\end{cor}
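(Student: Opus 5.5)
The plan is to bypass the quasi-polynomial of Lemma~\ref{qp} and work directly with the Hilbert series expansion (\ref{pe}):
$$\ell_k((M^n)_m)=\sum_{(\alpha,\beta)\in N(I)}c_{\alpha,\beta}\,\ell_k(S_{m-\alpha,\,n-\beta}),$$
where $S=k[X_1,\ldots,X_r,Y_1,\ldots,Y_s]$ is the bigraded polynomial ring with $\deg X_i=(1,0)$ and $\deg Y_j=(d_j,1)$. It then suffices to show two things. First, there is a single polynomial $Q(a,b)\in\Q[a,b]$ with $\ell_k(S_{a,b})=Q(a,b)$ for every lattice point $(a,b)$ of the cone $\mathfrak{C}_l$. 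Second, the hypothesis $m\geq d_ln+n_0$ puts every shifted point $(m-\alpha,n-\beta)$, $(\alpha,\beta)\in N(I)$, inside the region where this formula holds. Then $P(X,Y)=\sum c_{\alpha,\beta}\,Q(X-\alpha,Y-\beta)$ works.

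For the first point, count monomials. A monomial $X^uY^v$ has bidegree $(a,b)$ exactly when $|v|=b$ and $|u|=a-\sum_j d_jv_j$. Since $d_j\leq d_l$, for $a\geq d_lb$ we have $a-d\cdot v\geq 0$ for every $v$ with $|v|=b$. Hence
$$\ell_k(S_{a,b})=\sum_{v\in\N^s,\ |v|=b}\binom{a-d\cdot v+r-1}{r-1},$$
and each binomial coefficient is a fixed polynomial in $a-d\cdot v$; this expression is valid even for $a-d\cdot v\geq -(r-1)$. Expanding in powers of $a$, we are reduced to showing that each power sum $\sum_{|v|=b}(d\cdot v)^k$ is a polynomial in $b$ for $b\geq 0$. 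This holds because it is a sum of a polynomial function over the lattice points of the $b$-th dilate of the unimodular standard simplex. Concretely, one can prove it by induction on $s$: split off $v_s$ and use that $\sum_{i=0}^{b}i^jp(b-i)$ is a polynomial in $b$ whenever $p$ is. This gives $Q$.

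For the second point, recall that ${\mathfrak R}C_l=\bigcap_{(\alpha,\beta)\in N(I)}(\mathfrak{C}_l+(\alpha,\beta))=\mathfrak{C}_l+(\lambda_l,\beta_l)$. So for $(m,n)\in{\mathfrak R}C_l$, every $(m-\alpha,n-\beta)$ lies in $\mathfrak{C}_l$, i.e.\ $n-\beta\geq 0$ and $m-\alpha\geq d_l(n-\beta)$, and the formula applies termwise. The condition $m\geq d_ln+n_0$ with $n_0=\lceil\lambda_l-d_l\beta_l\rceil$ is precisely $m-\lambda_l\geq d_l(n-\beta_l)$. It therefore gives membership in ${\mathfrak R}C_l$ once also $n\geq\beta_l$.

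The main obstacle is this residual range of finitely many small $n$ (namely $n<\beta_l$, if $\beta_l>0$), where some shifts have $n-\beta<0$ or slightly violate the cone inequality. There the true value $\ell_k(S_{m-\alpha,n-\beta})$ is $0$, and I must check that $Q$ also vanishes there. I would first try to verify directly that $Q(a,b)=0$ for $-(s-1)\leq b<0$ and for $a$ large relative to $b$. This is an Ehrhart-reciprocity-type vanishing: the lattice-point polynomial of the simplex vanishes at $b=-1,\ldots,-(s-1)$, since the $(s-1)$-simplex has no interior points at those dilates. I would combine this with the bound $\beta\leq$ (maximal $Y$-degree in the resolution), which I would hope is at most $s-1$ by the Hilbert--Burch/Koszul shape of the resolution. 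If that fails, the fallback is to enlarge $n_0$ or restrict to $n\gg0$, which is all that is used later for the density functions.
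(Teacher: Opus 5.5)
Your first two points are correct. Since $(1,0)$ and $(d_l,1)$ form a $\Z$-basis, $\ell_k(S_{a,b})$ is a single polynomial $Q$ on $\mathfrak{C}_l$. Moreover $\beta_l=\max\beta$ and $\lambda_l-d_l\beta_l=\max(\alpha-d_l\beta)$ over $N(I)$, so your shift argument gives $\ell_k((M^n)_m)=P(m,n)$ on $\mathfrak{R}C_l\cap\N^2$, i.e.\ for $m\ge d_ln+n_0$ \emph{and} $n\ge\beta_l$. The paper itself gives no argument here; it only cites [DRT25] and [HT03]. Your Ehrhart-type vanishing $Q(a,b)=0$ for $-(s-1)\le b\le -1$ is also right, and it holds for all $a$, so it extends the formula to $n\ge\beta_l-(s-1)$.

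The gap is the last step. The hoped-for bound $\beta\le s-1$ on $N(I)$ is false. Worse, no argument can close the residual range, because the statement as literally written fails there.

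Take $A=k[x,y,z]$ and $M=I=(x^4,x^3y,xy^3,y^4)$, so $e=1$, $s=4$ and $d_1=d_l=4$. Let $I'$ be the same ideal in $k[x,y]$. Then $I'^n=(x,y)^{4n}$ for $n\ge 2$, while $\ell_k(k[x,y]/I')=11$, the extra monomial being $x^2y^2$. Hence
\[
\ell_k((I^n)_m)=\binom{m+2}{2}-\binom{4n+1}{2}\quad (m\ge 4n,\ n\ne 1),\qquad \ell_k(I_m)=\binom{m+2}{2}-11\quad (m\ge 4).
\]
The polynomial $P$ is forced by the values at large $n$. At $n=1$ it is off by one for every $m$, so enlarging $n_0$ cannot help.

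In terms of your analysis, put $u=x^4y$. The bigraded Hilbert series is $\frac{1}{1-x}\bigl[\frac{1}{(1-x)^2(1-u)}+\frac{4u}{(1-x)(1-u)^2}-u\bigr]$. Over the denominator $(1-x)^3(1-u)^4$ its numerator contains $-(1-x)^2u^5$, so $\beta=5>s-1=3$ actually occurs in $N(I)$.

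The true statement needs the extra hypothesis $n\ge\beta_l$, equivalently $(m,n)\in\mathfrak{R}C_l$. That is exactly what your first two points prove, and it is all the paper uses later: Remark~\ref{r2} and Proposition~\ref{p1} only need $n\gg 0$. So your fallback of restricting $n$ is the right resolution; enlarging $n_0$ is not.
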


\section{Adic density function} 

Following Theorem was proved in
\cite[Theorem~3.10]{DRT25}
for a Noetherian filtration  $\{I_n\}_n$ of homogeneous ideals in $A$. Here we need to modify some of the arguments as some of the inqualities, such as  $I_{n+1}\subseteq I_n$,
no longer hold for the case of family of modules $\{M^n\}_n$.

\begin{thm}\label{t1} Let $A[Mt] =   
\bigoplus_{(m,n)\in\N^2}{(M^n)}_mt^n$
 be the  bigraded $A$-algebra as in Notations~\ref{n1}.
 Then for a given cone  ${\mathfrak C_{j_0}}$ as in Notations~\ref{chambers}  there exist a homogeneous polynomial $P_{j_0}(X, Y)$ and a quasi-polynomial $Q_{j_0}(X,Y)$ in $\Q[X, Y]$ with $\deg~Q_{j_0}(X,Y) < \deg~P_{j_0}(X, Y)$ such that
 \begin{equation}\label{et1}
 (m,n)\in {\mathfrak{R}C_{j_0}} \cap \N^2 \implies  
\ell_k\big((M^n)_m\big) = P_{j_0}(m,n) + Q_{j_0}(m, n).\end{equation}
Further $\deg P_{j_0}(X,Y) \leq d+e-2$.
\end{thm}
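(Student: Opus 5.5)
By Lemma~\ref{qp}, on the restricted cone ${\mathfrak R}C_{j_0}$ the length $\ell_k((M^n)_m)$ agrees with a quasi-polynomial $QP_{j_0}(m,n)$ of period $h$ in both variables. So the proof reduces to two tasks. First, the top-degree part of $QP_{j_0}$ is an honest polynomial, i.e.\ independent of the residue classes of $(m,n)$ modulo $h$. Second, its total degree is at most $d+e-2$.

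For the first task I will argue directly from (\ref{pe}) rather than quote Lemma~\ref{qp} blindly. On ${\mathfrak R}C_{j_0}$, every shifted point $(m-\alpha,n-\beta)$ with $(\alpha,\beta)\in N(I)$ lies in ${\mathfrak C}_{j_0}$. There the count $\ell(S_{m-\alpha,n-\beta})$ is a vector partition function for the generator degrees $(1,0)$ and $(d_i,1)$. On a fixed chamber this is a quasi-polynomial in $(m,n)$. Its highest-degree homogeneous component is a genuine polynomial, namely the volume of the fibre polytope, which varies polynomially in the chamber; only the lower-order terms are periodic. Summing with the coefficients $c_{\alpha,\beta}$, the translates $(m-\alpha,n-\beta)$ alter only lower-order terms of each homogeneous piece. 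Write the resulting quasi-polynomial as $\sum_k H_k(m,n)$ by homogeneous degree. Let $P_{j_0}$ be the highest nonvanishing $H_k$. To see that it has constant coefficients, I will use the standard argument: fix a residue class $(m,n)\equiv(a,b)\pmod h$ and compare leading terms along rays $t(m,n)$ with $t\equiv 1\pmod h$ through interior points of the cone. Using the Cohen–Macaulay-free fact that $\ell_k((M^n)_m)$ is nondecreasing under multiplication by a suitable nonzero element, I will show that two residue classes cannot have different leading forms. The argument from \cite[Theorem~3.10]{DRT25} goes through here. Concretely, multiplication by a nonzero homogeneous element $x\in A_1$ is injective on $F^n$, hence on $M^n$, because $A$ is a domain and $F^n$ is free. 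Similarly, multiplication by a generator $m_1$ of degree $(d_1,1)$ is injective, since ${\rm Sym}_A(F)$ is a domain. This gives $\ell((M^n)_m)\le\ell((M^{n})_{m+1})$ and $\ell((M^n)_m)\le \ell((M^{n+1})_{m+d_1})$. These are the replacements for $I_{n+1}\subseteq I_n$. Sandwiching one residue class between translates of another, via Lemmas~\ref{cgl2} and \ref{cgl3} to stay inside ${\mathfrak R}C_{j_0}$, forces the leading forms to coincide. Then $Q_{j_0}:=QP_{j_0}-P_{j_0}$ has smaller degree.

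For the degree bound I will note that $\ell_k((M^n)_m)\le \ell_k((F^n)_m)$. Here $F^n={\rm Sym}^n_A(F)$ is free of rank $\binom{n+\rho-1}{\rho-1}$, where $\rho={\rm rank}\,F$. That bound is too weak, since it gives degree $d+\rho-2$. Instead I use the rank-$e$ embedding (2) of the preliminaries, $M\hookrightarrow E(c_0)$ with $E$ free of rank $e$. This yields $M^n\hookrightarrow {\rm Sym}^n(E(c_0))$, a free module of rank $\binom{n+e-1}{e-1}$ with generators in degrees $O(n)$. Hence $\ell_k((M^n)_m)\le \binom{n+e-1}{e-1}\cdot\max_j \ell_k(A_{m+O(n)})$. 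This is $O\bigl(n^{e-1}(m+n)^{d-1}\bigr)$ on the cone, where $m=O(n)$ for $j_0<l$. In ${\mathfrak C}_l$, a ray with $m/n\to\infty$ is allowed, but the bound is still a polynomial of total degree $d+e-2$ in $(m,n)$. A homogeneous form of degree $D$ that is $O(\|(m,n)\|^{d+e-2})$ on an open cone must satisfy $D\le d+e-2$ or vanish. Therefore $\deg P_{j_0}\le d+e-2$.

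The main obstacle is the first task: showing the leading homogeneous part has no periodic coefficients. The monotonicity needs to be arranged so that all shifted points remain in ${\mathfrak R}C_{j_0}$, and the modules $M^n$ live in varying ambient modules $F^n$. I will try the injective maps by $x$ and by $m_1$ first, combined with Lemmas~\ref{cgl1}--\ref{cgl3}.
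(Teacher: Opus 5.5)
Your proposal is correct and is essentially the paper's proof. Both arguments get quasi-polynomiality from Lemma~\ref{qp}. Both then show the residue-class polynomials share a leading form by sandwiching along rays scaled by factors $\equiv 1 \pmod h$, using injective multiplication by a linear form of $A$ and by a nonzero element of bidegree $(\ast,1)$; the paper takes that element in bidegree $(hd_1,1)$ so the $m$-residue is unchanged, while your $m_1$ of bidegree $(d_1,1)$ moves the class diagonally and must be combined with the $(1,0)$-shifts. Both get the degree bound from $M\hookrightarrow E(c_0)$, which gives $\ell_k((M^n)_m)\le\binom{n+e-1}{e-1}\ell_k(A_{m+c_0n})$.

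The vector-partition-function paragraph can be dropped. On its own it would not show that the surviving top form is a polynomial, because the alternating sum in (\ref{pe}) cancels the genuinely polynomial top terms. Also, for keeping the shifted points inside ${\mathfrak R}C_{j_0}$ along interior rays, the relevant fact is Lemma~\ref{cgl1}, not Lemmas~\ref{cgl2} and~\ref{cgl3}.
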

\begin{proof} By Lemma~\ref{qp}, for the cone ${\mathfrak C}_{j_0}$, there is a quasi-polynomial $QP_{j_0}(X, Y)$
which is periodic of period $h$ in both variables $X$ and $Y$, where $h\geq 1$ is an integer, such that
$$(m, n) \in {\mathfrak R}C_{j_0} \implies 
\ell_k((M^n)_m)  =  QP_{j_0}(m, n).$$
Thus there exists a set
$$\sS_{j_0} =  \{P_{i,j}(X, Y)\in \mathbb{Q}[X,Y] \mid 0\leq i, j<h\}.$$ 
Further for all $i, j\in \N$ we identify  $P_{i, j}(X, Y)$ with a polynomial in $\sS_{j_0}$ by defining
$P_{i, j}(X, Y) = P_{i{\rm mod}\;h, j
{\rm mod}\;h}(X, Y)$.

Now we fix a point $x_0\in I(d_{j_0}, d_{j_0+1})$. Then for all $n\gg 0$ we have $(\lfloor x_0n\rfloor, n)\in {\mathfrak R}C_{j_0}$.
Now 
given
 $P_{0,0}(X,Y)$ and given $(\lfloor x_0n\rfloor, n)\in {\mathfrak R}C_{j_0}$ we  choose
 $\alpha_0, \beta_0 \in \{1, \ldots, h\}$  such that 
 \begin{equation}\label{alpha}
  \ell_k\big((M^{n+\beta_0})_{\lfloor x_0n\rfloor + \alpha_0}\big) =
P_{0, 0}(\lfloor x_0n\rfloor + \alpha_0, n+\beta_0).\end{equation}
With this choice of  $\alpha_0, \beta_0$ we have, for all $i, j\geq 0$
$$\ell_k\big((M^{n+\beta_0+j})_{\lfloor x_0n\rfloor + \alpha_0+i}\big)  =
P_{i, j}(\lfloor x_0n\rfloor + \alpha_0+i, n+\beta_0+j).$$

Let $\deg P_{i,j}(X, Y) = r_{i,j}$ and set $r_0 = \max\{r_{i,j}\}_{i,j}$. We consider the finite set
$$\sS=\{(\alpha, \beta)\in \N^2\mid 0\leq \alpha\leq (r_0+2+hd_1)h,\quad 0\leq \beta\leq 2h\},$$
where $d_1\geq 1$ denotes the integer as in Notations~\ref{n1}.
We can further choose  $n_{x_0}>0$ such that for all $n\geq n_{x_0}$
\begin{equation}\label{n_x}
 (\lfloor x_0n\rfloor + \alpha, n+\beta) \in {\mathfrak R}C_{j_0}\cap 
\N^2\quad\mbox{for each}\quad
(\alpha, \beta)\in \sS.\end{equation}
In particular
$$(\lfloor x_0n\rfloor +\alpha_0+i+i_1h, n+\beta_0+j)\in 
{\mathfrak R}C_{j_0}\cap 
\N^2\quad\mbox{for all}\quad 0\leq i_1\leq r_0,~~~ 1\leq i, j \leq h$$

and for a choice of $\alpha_0, \beta_0$ as in (\ref{alpha})  
\begin{equation}\label{e2_poly}
\ell_k\big((M^{n+\beta_0+j})_{\lfloor x_0n\rfloor +
\alpha_0+i+i_1h}\big) =
P_{i, j}(\lfloor x_0n\rfloor + \alpha_0+i+i_1h, n+\beta_0+j).\end{equation}

To prove the first assertion of the theorem, we need to show that the top degree terms of the polynomials  
$P_{i,j}(X, Y)$ is independent of the choice of $(i,j)$. For $h=1$ there is nothing to prove so henceforth we assume that $h\geq 2$.
Moreover since the choice of $P_{0,0}$ is random, it is enough to prove the following.

\noindent{\bf Claim}. \begin{enumerate}
\item[$(i)$] The top degree terms of  $P_{0,0}(X, Y)$ = the top degree terms of $P_{j,0}(X, Y)$, for all $1\leq j < h$.
\item[$(ii)$] The top degree terms of $P_{0,0}(X, Y)$ = the top degree terms of $P_{0,j}(X, Y)$, for all $1\leq j < h$.
\end{enumerate}

 \vspace{5pt}

\noindent{\emph {Proof of the Claim $(i)$}}.\quad Let ${\tilde r} = \max\{r_{0,0}, r_{j,0}\}$ (so ${\tilde r}\leq r_0$). Then we can express
$$P_{0,0}(X,Y) = \sum_{l=0}^{\tilde r}\lambda_l^0X^lY^{{\tilde r}-l}+R_{0,0}(X,Y)\quad\mbox{and}\quad   
P_{j,0}(X,Y) = \sum_{l=0}^{\tilde r}\lambda_l^jX^lY^{{\tilde r}-l}+R_{j,0}(X,Y),$$
where degrees of $R_{0,0}(X,Y)$ and $R_{j,0}(X,Y)$ are 
strictly less than ${\tilde r}$.

Therefore we can write 
$$\tfrac{P_{0,0}(X,Y)}{Y^{\tilde r}} =
\sum_{l=0}^{\tilde r}\lambda^0_l(\tfrac{X}{Y})^l + \sum_{i=1}^{\tilde r}
\tfrac{R^0_{{\tilde r}-i}({X}/{Y})}{Y^i},\quad\quad
\tfrac{P_{j,0}(X,Y)}{Y^{\tilde r}} =
\sum_{l=0}^{\tilde r}\lambda^j_l(\tfrac{X}{Y})^l + \sum_{i=1}^{\tilde r}
\tfrac{R^j_{{\tilde r}-i}({X}/{Y})}{Y^i},$$
where $R^0_i({X}/{Y})$ and $R^j_i({X}/{Y})$ are polynomials in $\Q[{X}/{Y}]$ of degrees $\leq i$. Note that if $r_{j,0} < {\tilde r}$ then all $\lambda^j_l = 0$ for all $l$. Similar statement holds if $r_{0,0} < {\tilde r}$.

If the claim does not hold then
$\sum_{l=0}^{\tilde r}(\lambda^0_l-\lambda^j_l)(\tfrac{X}{Y})^l$ is a nonzero polynomial of degree $\leq {\tilde r}$.
Now  $\{\frac{\lfloor xn\rfloor + \alpha_0+ih}{n+\beta_0}\mid 
0\leq i\leq {\tilde r}\}$ 
is a set of ${\tilde r}+1$ distinct points.
Therefore for some
$$(m_1, n_1) = (\lfloor x_0n\rfloor + \alpha_0+i_1h, n+\beta_0)~\mbox{we have}~ \sum_{l=0}^{{\tilde r}}(\lambda^0_l-\lambda^j_l)
\left(\tfrac{m_1}{n_1}\right)^l \neq 0.$$

Moreover $\{(m_1, n_1), (m_1h^i, n_1h^i)\mid i\geq 1\}
\cup \{(m_1+j, n_1), (m_1+h, n_1) \}
\subseteq {\mathfrak R}C_{j_0}$ implies 
\begin{multline*}
\{(m_1h^i+m_1, n_1h^i+n_1)\mid i\geq 1\} \cup
\{(m_1h^i+m_1+j, n_1h^i+n_1)\mid i\geq 1\}\\
\cup \{(m_1h^i+m_1+h, n_1h^i+n_1)\mid i\geq 1\}
\subseteq {\mathfrak R}C_{j_0}\cap \N^2.\end{multline*}

Since ${\rm Sym}_A(F)$ is a polynomial ring over the domain $A$ and a non zerodivisor of degree ~$1$ of $A$ is a non zerodivisor of degree $(1, 0)$ in $A[Mt]$, for all $i\geq 1$ we have
$$\ell_k\big((M^{n_1h^i+n_1})_{m_1h^i +m_1}\big) \leq \ell_k\big((M^{n_1h^i+n_1})_{m_1h^i +m_1+j}\big)
\leq \ell_k\big((M^{n_1h^i+n_1})_{m_1h^i+m_1+h}\big).$$
 On the other hand,
since  $\ell_k\big((M^{n_1})_{m_1}\big) = P_{0,0}(m_1, n_1)$, we deduce
\begin{align*}
\lim_{i\to\infty}\dfrac{P_{0,0}(m_1h^i+m_1,n_1h^i+n_1)}{(n_1h^i+
n_1)^{\tilde r}} &\leq
\lim_{i\to\infty}\dfrac{P_{j,0}(m_1h^i+m_1+j,n_1h^i+n_1)}{(n_1h^i+
n_1)^{\tilde r}}\\
&\leq \lim\limits_{n\to\infty}
\dfrac{P_{0,0}(m_1h^i+m_1+h,n_1h^i+n_1)}{(n_1h^i+n_1)^{\tilde r}},
\end{align*}
which implies 
$$\sum_{l=0}^{{\tilde r}}
\lambda^0_l\left(\tfrac{m_1}{n_1}\right)^l \leq 
\sum_{l=0}^{{\tilde r}}
\lambda^j_l\left(\tfrac{m_1}{n_1}\right)^l 
\leq \sum_{l=0}^{{\tilde r}}
\lambda^0_l\left(\tfrac{m_1}{n_1}\right)^l.$$
This contradicts the choice of $(m_1, n_1)$ and thus proves Claim $(i)$.

\vspace{5pt}

 \noindent{\underline{Proof of Claim}}~(ii).\quad  Let ${\bar r} = \max\{r_{0,0}, r_{0,j}\}$, where we recall  $r_{0,j} = \deg~P_{0, j}(X, Y)$.
 Then 
$$\tfrac{P_{0,0}(X,Y)}{X^{\bar r}} =
\sum_{l=0}^{\bar r}\nu^0_l(\tfrac{Y}{X})^l + \sum_{i=1}^{\bar r}\tfrac{1}{X^i}
S^0_{{\bar r}-i}(\tfrac{Y}{X}),\quad\quad
\tfrac{P_{0,j}(X,Y)}{X^{\bar r}} =
\sum_{l=0}^{\bar r}\nu^j_l(\tfrac{Y}{X})^l + \sum_{i=1}^{\bar r}
\tfrac{1}{X^i}S^j_{{\bar r}-i}(\tfrac{Y}{X}),$$
where $S^0_i(\tfrac{Y}{X})$ and $S^j_i(\tfrac{Y}{X})$ are polynomials in $\Q[\tfrac{Y}{X}]$ of degree $\leq i$.
Note that if $r_{0,j} < {\bar r}$ (where ${\bar r}\leq r_0$) then all $\nu^j_l = 0$. Same goes if $r_{0,0} < {\bar r}$.

If the claim does not hold then $\sum_{l=0}^{\bar r}(\nu^0_l-\nu^j_l)(\tfrac{Y}{X})^l$ is a nonzero polynomial of degree $\leq {\bar r}$.
Since  $\{\frac{n+\beta_0}{(\lfloor x_0n\rfloor + \alpha_0+ih)}\mid0\leq i\leq {\bar r}\}$ is  a set of ${\bar r}+1$ distinct points there exists $0\leq i_2\leq {\bar r}$ such that for 
\begin{equation}\label{m_2n_2}
(m_2, n_2) = (\lfloor x_0n\rfloor + \alpha_0+i_2h, n+\beta_0)\quad\mbox{we have}\quad
\sum_{l=0}^{\bar r}(\nu^0_l-\nu^j_l)\left(\tfrac{n_2}{m_2}\right)^l \neq 
0.\end{equation}

Now,  by (\ref{n_x})
$$
  \{(m_2h^i, n_2h^i)\mid i\geq 1\} \cup
 \{(m_2, n_2), (m_2+jhd_1, n_2+j), (m_2+h^2d_1, n_2+h)\} 
 \subseteq \mathfrak{R}C_{j_0},$$
  and therefore 
\begin{multline}
\{(m_2h^i+m_2, n_2h^i+n_2)\} \cup 
\{(m_2h^j+m_2+jhd_1, n_2h^i+n_2+j)\}\\\
\cup\{(m_2h^j+m_2+h^2d_1, n_2h^i+n_2+h)\}
\}
\subseteq {\mathfrak R}C_{j_0}\cap \N^2.\end{multline}

By (\ref{e2_poly}), $\ell_k\big((M^{n_2})_{m_2}\big) = P_{0,0}(m_2, n_2)$ which implies
$$\ell_k\big((M^{n_2h^i+n_2+j})_{m_2h^i+m_2}\big) = P_{0,j}(m_2h^i+m_2, n_2h^i+n_2+j).$$

Let ${\tilde m}\in M$ be a nonzero homogeneous element of degree, say, $hd_1$. Then, since ${\rm Sym}_A(F)$ is a polynomial ring over an integral domain $A$ the elements    ${\tilde m}^j $ and ${\tilde m}^{h-j}$ are non zerodivisors in $A[Mt]$ of degrees $(jhd_1, j)$ and $((d-j)hd_1, d-j)$.
This induces the injective maps of $k$-vector spaces
$$(M^n)_mt^n  \longto (M^{n+j})_{m+jhd_1}t^{n+j}
 \longto (M^{n+d})_{m+h^2d_1}t^{n+d}$$
 for all $m$ and $n$.
Therefore, for $0\leq j < h$,
$$\ell_k((M^{n_2h^i+n_2})_{m_2h^i+m_2}) \leq \ell_k((M^{n_2h^i+n_2+j})_{m_2h^i+m_2+jhd_1})\leq
\ell_k((M^{n_2h^i+n_2+h})_{m_2h^i+m_2+h^2d_1}),$$
which  gives 
$$\begin{array}{lcl}
P_{0,0}(m_2h^i+m_2, n_2h^i+n_2) & \leq & P_{0,j}(m_2h^i+m_2+jhd_1, n_2h^i+n_2+j)\\\
& \leq & P_{0,0}(m_2h^i+m_2+h^2d_1, n_2h^i+n_2+h).\end{array}$$

Since $h\geq 2$, for any fixed $k$ we have
$$\lim_{i\to \infty} 
\frac{n_2h^i+n_2+k}{m_2h^i+m_2+khd_1} = \frac{n_2}{m_2}\;\;\mbox{and}\;\; 
\lim_{i\to \infty} 
\frac{m_2h^i+m_2+khd_1}{m_2h^i} = 1\
.$$
But then 
$$\sum_{l=0}^{\bar r}\nu^0_l\left(\tfrac{n_2}{m_2}\right)^l 
=
\lim_{i\to \infty} \frac{P_{0,0}(m_2h^i+m_2, n_2h^i+n_2)}{(m_2h^i)^{\bar r}}
=
\lim_{i\to \infty} \frac{P_{0,0}(m_2h^i+m_2+h^2d_1, n_2h^i+n_2+h)}{(m_2h^i)^{\bar r}}$$
and
$$ \sum_{l=0}^{\bar r}\nu^j_l\left(\tfrac{n_2}{m_2}\right)^l =  \lim_{i\to \infty} \frac{P_{0,j}(m_2h^i+m_2+jhd_1, n_2h^i+n_2+j)}{(m_2h^i)^{\bar r}} 
$$
which contradicts (\ref{m_2n_2}). This proves Claim~(ii).

\vspace{5pt} 
 
 Now to prove the last assertion of the theorem it is enough 
to show that the degree of the polynomial ${P}_{0,0}(X, Y)$ is bounded above by $d+e-2$.  We have $r_0=\deg P_{0, 0}(X,Y)$. Then

$$\tfrac{P_{0,0}(X,Y)}{Y^{r_0}} =
\sum_{l=0}^{r_0}\lambda^0_l(\tfrac{X}{Y})^{r_0-l} + \sum_{i=1}^{r_0}\tfrac{1}{Y^i}
R^0_{{r_0}-i}(\tfrac{X}{Y}),$$
where $S^0_i(\tfrac{X}{Y})$  are polynomials in $\Q[\tfrac{X}{Y}]$ of degree $\leq i$.

Since $\{\frac{\lfloor x_0n\rfloor + 
\alpha_0+ih}{n+\beta_0}\mid 0\leq i\leq r_0\}$ is a set of $r_0+1$ distinct points,
there exists $i_0\in \{0, 1, \ldots, r_0\}$ such that
$$\sum_{l=0}^{r_0}\lambda_l^0\left({m_0}/{n_0}\right)^l \neq 
0\quad\mbox{for}\quad (m_0, n_0) = (\lfloor xn\rfloor + 
\alpha_0+i_0h, n+\beta_0).$$

By (\ref{alpha}) $P_{0,0}(m_0,n_0) = 
\ell_k((M^{n_0})_{m_0})$. On the other hand
 $(m_0, n_0)\in \mathfrak{R}C_{j_0}$ implies  
$$\{(m_0(h+1)^k, n_0(h+1)^k)\mid k\geq 1\}\subset \mathfrak{R}C_{j_0}\cap \N^2.$$
So we have 
$$P_{0,0}(m_0(h+1)^k,n_0(h+1)^k) = \ell_k\big((M^{n_0(h+1)^k})_{m_0(h+1)^k}\big).$$

We reecall that the Rees algebra $A[Mt]$ independent of the choice of embedding of $M$ into a free module.
 So if we  choose a free module $E$ as given in preliminaries then  for all $m, n$ 
$$ ((sM)^n)_m\subseteq (E^n)_m \subseteq (M^n)_m\;\;\mbox{and}\;\; \ell_k((M^n)_m)\leq 
\ell_k((E^n)_{m+c_0n}),$$

where $E^n$ is a free $A$-module with basis 
$\{m_1^{i_1}\cdots m_e^{i_e}\mid \sum_ji_j = n\}$.
Since $\dim~A\geq 2$ we have
$$\ell_k((Am_1^{i_1}\cdots m_e^{i_e})_m) = 
\ell_k(A_{m-\sum_j{i_j}f_j})\leq \ell_k(A_m)
= O(m^{d-1}).$$

Therefore
\begin{equation}\label{degree}\ell_k((E^n)_{m}) \leq {{n+e-1}\choose{e-1}}\cdot \ell_k(A_{m})\implies  
\ell_k((M^n)_m)\leq {{n+e-1}\choose{e-1}}\ell_k(A_{m+c_0n}).\end{equation}

Since the Hilbert polynomial of $A$ is of degree $d-1$ if $r_0 > d+e-2$ then 
$$\lim_{k\to \infty}
\frac{\ell_k\big((M^{n_0(h+1)^k})_{m_0(h+1)^k}\big)}{(n_0(h+1)^k)^{r_0}} \leq 
\lim_{k\to \infty}\frac{{{n_0(h+1)^k+e-1}\choose {e-1}}
\ell_k(A_{(m_0+c_0n_0)(h+1)^k}\big)}{(n_0(h+1)^k)^{r_0}}
= 0.$$

This contradicts the choice of  $(m_0,n_0)$
because
$$\lim\limits_{k\to\infty}
\dfrac{P_{0,0}(m_0(h+1)^k,n_0(h+1)^k)}{(n_0(h+1)^k)^{r_0}} = 
\sum\limits_{l=0}^{r_0}\lambda_l\left(\dfrac{m_0}{n_0}
\right)^l\neq 0.$$
\end{proof}

\begin{thm}\label{t2}
Let $A$ and $M$ as  as in Notationn~\ref{n1}.  Then the function
$$f_{M}:\R\longto \R_{\geq 0}\quad\mbox{given by}\quad x\longto \lim_{n\to \infty}\frac{\ell_k\big((M^n)_{\lfloor xn\rfloor}\big)}{n^{d+e-2}/(d+e-1)!},$$
is a well defined function which is continous everywhere except possibly at  the point $x = d_1$. Moreover
$$f_{M}(x) = \begin{cases}
              0 & \text{if}\;\;x\in I(-\infty, d_{1}),\\
              {\bf p}_1(x) & \text{if}\;\;x\in I({d_1}, {d_2}],\\
              {\bf p}_j(x) & \text{if}\;\;x\in I[{d_j}, d_{j+1}]\;\;\text{where}\;\;2\leq j<l,\\
              {\bf p}_j(x) & \text{if}\;\;x\in I[{d_l},\infty),
             \end{cases}$$
where ${\bf p}_j(x)\in \Q[x]$ is a polynomial of degree $\leq d-1$ and ${\bf p}_l(x)$ is a nonzero polynomial of degree $d-1$.
\end{thm}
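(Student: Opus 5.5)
The plan is to deduce the statement from Theorem~\ref{t1} applied chamber by chamber, and then glue the chambers using the monotonicity arguments of Lemmas~\ref{cgl2} and~\ref{cgl3}.

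\emph{Interior points and $x<d_1$.} For $x<d_1$ we have $(M^n)_{\lfloor xn\rfloor}=0$ once $n$ is large. Indeed, $M^n$ is generated in degrees $\geq nd_1$: it is generated by products of $n$ generators, each of degree $\geq d_1$, multiplied by elements of $A$ of degree $\geq 0$. So $f_M(x)=0$ there.

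Now fix $x\in I(d_j,d_{j+1})$, or $x>d_l$ when $j=l$. By Lemma~\ref{cgl1}, $(\lfloor xn\rfloor,n)\in{\mathfrak R}C_j$ for $n\gg0$. Theorem~\ref{t1} then gives
$$\ell_k((M^n)_{\lfloor xn\rfloor})=P_j(\lfloor xn\rfloor,n)+Q_j(\lfloor xn\rfloor,n),$$
with $P_j$ homogeneous of degree $\le d+e-2$ and $\deg Q_j<\deg P_j$. Dividing by $n^{d+e-2}$, the quasi-polynomial part $Q_j$ (with bounded periodic coefficients) and the error from replacing $\lfloor xn\rfloor$ by $xn$ both tend to $0$. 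Hence the limit equals $(d+e-1)!\,[P_j]_{d+e-2}(x,1)$, where $[P_j]_{d+e-2}$ denotes the degree $d+e-2$ part of $P_j$ (zero if $\deg P_j<d+e-2$). This is a polynomial ${\bf p}_j(x)$ in $x$ of degree $\le d+e-2$.

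To get the sharper bound $\deg{\bf p}_j\le d-1$, I would use the estimate (\ref{degree}):
$$\ell_k((M^n)_m)\le \binom{n+e-1}{e-1}\,\ell_k(A_{m+c_0n}).$$
This gives ${\bf p}_j(x)\le C\,(x+c_0)^{d-1}$ for all large $x$ in the chamber, which forces $\deg{\bf p}_l\le d-1$. For bounded chambers the degree is not controlled by growth. There I would argue instead that the homogeneous top part of $P_j(X,Y)$ has $X$-degree at most $d-1$, and I expect this to be the main obstacle. My first attempt: fix $n$ and let $m$ vary in the chamber. For fixed $n$, $m\mapsto\ell_k((M^n)_m)$ is the Hilbert function of the graded module $M^n$ over $A$, which has dimension $d$. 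On the subrange of the chamber where it agrees with $P_j(m,n)+Q_j(m,n)$, this makes it a polynomial in $m$ of degree $\le d-1$ for $m\gg0$. I would try to extend that polynomial identity in $m$ to the whole chamber range, using the regularity bound \cite[Corollary~2]{Kodi00} (the regularity of $M^n$ is linear in $n$) to ensure the Hilbert polynomial is attained from degree $\approx cn$ on. This bounds the $X$-degree of $P_j$ by $d-1$ on the cone.

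\emph{Continuity and gluing at $d_i$, $i\ge2$.} At $x=d_i$ I would show ${\bf p}_{i-1}(d_i)={\bf p}_i(d_i)$ and that the limit exists with this value. For the lower bound I would use the injective maps given by multiplication by a degree-one nonzerodivisor $x_1\in A$, and by a power of a degree-$d_1$ element $\tilde m$ of $M$, as in the proof of Theorem~\ref{t1}. Together with Lemma~\ref{cgl2}, these sandwich $\ell_k((M^n)_{\lfloor d_in\rfloor})$ from below by values at points of ${\mathfrak R}C_{i-1}$ lying within $O(1)$ of $(\lfloor d_in\rfloor,n)$. Lemma~\ref{cgl3} similarly sandwiches it from above by values at points of ${\mathfrak R}C_i$. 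Because the shifts are bounded, the normalized limits on both sides are ${\bf p}_{i-1}(d_i)$ and ${\bf p}_i(d_i)$ respectively. I would carry out the same comparison with the roles reversed to get the opposite inequality, which yields equality and hence the existence and continuity of $f_M$ at $d_i$. Continuity inside each open chamber is automatic, since $f_M$ is a polynomial there. Discontinuity is allowed only at $d_1$, because there we only know $f_M(x)=0$ for $x<d_1$.

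\emph{Nonvanishing of ${\bf p}_l$ and degree exactly $d-1$.} By Corollary~\ref{HT}, ${\bf p}_l(x)$ is the top-degree part of the bigraded Hilbert polynomial $P(m,n)$ evaluated at $(x,1)$. To see that it has degree exactly $d-1$, I would bound it from below using $(sM)^n\subseteq E^n$ with $E$ free of rank $e$:
$$\ell_k((M^n)_m)\ \ge\ \ell_k((E^n)_{m-c_0n})\ \ge\ \binom{n+e-1}{e-1}\,\ell_k(A_{m-c_0n-nf}),$$
where $f=\max f_i$. The right-hand side is $\sim \frac{n^{e-1}}{(e-1)!}\,\frac{e(A)(m-(c_0+f)n)^{d-1}}{(d-1)!}$. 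Hence ${\bf p}_l(x)\ge c\,(x-c_0-f)^{d-1}$ for large $x$, with $c>0$. Combined with the upper bound $\deg{\bf p}_l\le d-1$, this gives $\deg{\bf p}_l=d-1$ and ${\bf p}_l\neq0$.
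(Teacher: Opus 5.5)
\textbf{Gap: the degree bound for bounded chambers.} Your overall plan matches the paper's, and those parts are sound:
\begin{enumerate}
\item Theorem~\ref{t1} on each open chamber;
\item the nonzerodivisor sandwich via Lemmas~\ref{cgl2} and~\ref{cgl3} at $d_i$ for $i\ge 2$;
\item (\ref{degree}) for $\deg{\bf p}_l\le d-1$;
\item comparison with a free $E$ for $\deg{\bf p}_l\ge d-1$.
\end{enumerate}
The genuine gap is the step you flag yourself: $\deg{\bf p}_j\le d-1$ for the bounded chambers $j<l$. Your proposed fix cannot work. For fixed $n$ the chamber is the bounded range $d_jn\le m\le d_{j+1}n$, so ``$m\gg 0$'' never happens inside it. Moreover, the Hilbert function of $M^n$ is only guaranteed to equal its Hilbert polynomial above ${\rm reg}(M^n)$. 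That is at least the top generating degree of $M^n$, which equals $d_Mn$ in the example below. So Kodiyalam's linear bound never reaches a bounded chamber.

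\textbf{The claim is false once $e\ge 2$.} Take $A=k[x,y]$, so $d=2$, and $M=A(-1)\oplus A(-2)$ with basis $m_1,m_2$, so $e=2$, $d_1=1$, $d_2=d_M=2$. Then $M^n=\bigoplus_{j=0}^nA\,m_1^{n-j}m_2^j$ with $\deg m_1^{n-j}m_2^j=n+j$. For $n\le m\le 2n$ this gives
\[
\ell_k((M^n)_m)=\sum_{j=0}^{m-n}(m-n-j+1)=\binom{m-n+2}{2},
\]
whence ${\bf p}_1(x)=3(x-1)^2$ on $I(1,2)$, of degree $2>d-1$. For $m\ge 2n-1$ one gets $(n+1)(m-n+1)-n(n+1)/2$, so ${\bf p}_2(x)=6x-9$. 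This has degree $d-1$ and agrees with ${\bf p}_1$ at $x=2$.

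So for $j<l$ the correct statement is only $\deg{\bf p}_j\le d+e-2$, which you already have from Theorem~\ref{t1}. The paper's own proof derives the bound for all $j$ from (\ref{degree}). As you correctly observed, that estimate gives no degree information on a bounded interval. The paper's proof therefore has the same hole, and the statement must be weakened rather than proved.

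\textbf{Two smaller points.} First, you never show that the limit exists at $x=d_1$, which is required for $f_M$ to be well defined. The sandwich only gives $0\le\liminf\le\limsup\le{\bf p}_1(d_1)$ there. The paper uses that $(M^n)_{d_1n}=(M_{d_1})^n$ is the $n$-th component of the standard graded $k$-algebra $\bigoplus_n(M_{d_1})^n$. This algebra has dimension at most $d+e-1$, so its Hilbert function is eventually a polynomial of degree at most $d+e-2$.

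Second, in your final step the lower bound comes from $E\subseteq M$: the inclusion $E^n\subseteq M^n$ gives $\ell_k((M^n)_m)\ge\ell_k((E^n)_m)$. The inclusion $(sM)^n\subseteq E^n$ instead yields the upper bound (\ref{degree}). With that correction, your proof that ${\bf p}_l$ is nonzero of degree exactly $d-1$ is fine.
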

\begin{proof}Let $x\in I(-\infty, d_1)$, then 
 since  ${(M^{n})}_{m}=0$ for all  $m < d_1n$
we have  $f_{M}(x)=0$. 

Let $x=d_1$. Let  $L = M_{d_1}A$ denote the $A$-submodule generated by degree $d_1$ elements
of $M$. We consider $L^n$ as the image of $\tensor^nL$ in $F^n$.
Then  $(M^n)_{d_1n}
= (L^n)_{d_1n}$, where 
 $\oplus_{n\geq 0}(L^n)_{d_1n}$ is a finitely generated standard graded $k$-algebra, 
 and since  $\rank\; L\leq e$, it is of Krull dimension $\leq d+e-1$.
$ \lim_{n\to \infty}\ell_k\big((M^n)_{d_1n}\big)/n^{d+e-2}$
 exists.  In particular
 $f_M$ is well defined at $x= d_1$.

Let $x\in I({d_j}, {d_{j+1}})$ then there exists an integer $n_x>0$ such that $\left(\lfloor xn\rfloor, n\right) \in {\mathfrak R}C_j$ for all $n\geq n_x$. Therefore by Theorem \ref{t1}  there is a homogeneous polynomial $P_j(X,Y)$ of degree $r_j\leq d+e-2$ and a quasi-polynomial $Q_j(X,Y)$ of degree $<r_j$ such that for all $n\geq n_x$,
\begin{equation}\label{cont}\ell_k\big((M^n)_{\lfloor xn\rfloor}\big) = P_j({\lfloor xn\rfloor}, n) + Q_j({\lfloor xn\rfloor},n).\end{equation}
This implies that 
\begin{equation*}f_{M}(x) = \lim_{n\to \infty}\frac{\ell_k\big((M^n)_{\lfloor xn\rfloor}\big)}{n^{d+e-2}/(d+e-1)!} = \lim_{n\to\infty}\dfrac{P_j({\lfloor xn\rfloor}, n)}{n^{d+e-2}/(d+e-1)!} =:{\bf p}_j(x)\;\;\mbox{if}\;\;x\in I(d_j, d_{j+1})\end{equation*}
 
 Hence $f_M(x)$ 
is a well defined function continous function on  the open intervals $I(d_j, d_{j+1})$ for all  $1\leq j\leq l$, where we define $d_{l+1} = \infty$.
So to prove that the function
$f_M$ is well defined and continous on the interval $I(d_1, \infty)$   it is enough to show that
 $$\lim_{n\to \infty}\frac{\ell_k\big((M^n)_{d_i})}{n^{d+e-2}/(d+e-1)!} = {\bf p}_{i-1}(d_i) = {\bf p}_{i}(d_i),\;\; \mbox{for}\;\; 2\leq i\leq l.$$

Let $x=d_i$, where $2\leq i \leq l$. 
By Lemma~\ref{cgl2}  there exist integers ${\tilde m_0}, m_1\in \N_{>0}$ such that for all $n\gg 0$,
$$\begin{array}{lcl}
 (xn+{\tilde m_0}d_1, n+{\tilde m_0}) & = &  n(x, 1)+ {\tilde m_0}(d_1, 1) \in {\mathfrak{R}}C_{i-1}\\
 (xn-m_1, n) & = & n(x, 1) -  m_1(1, 0) \in {\mathfrak{R}}C_{i-1}.
\end{array}$$

So by (\ref{cont}) 
$$\lim_{n\to \infty}\frac{\ell_k\big((M^n)_{xn-m_1}\big)}{n^{d+e-2}/ (d+e-1)! } = {\bf p}_{i-1}(x) = \lim_{n\to \infty}\frac{\ell_k\big((M^{n+{\tilde m_0}})_{ xn+{\tilde m_0}d_1}\big)}{n^{d+e-2}/(d+e-1)!  }. 
$$

Let $z_1\in M$ be a nonzero element of degree $(d_1, 1)$ then $z_1^{\tilde m_0}\in M^{\tilde m_0}\subseteq F^{\tilde m_0}$ is a non zerodivisor of degree $({\tilde m_0}d_1, {\tilde m_0})$
on $A[Mt]$ . Also by choosing a nonzero element $z_0$ of degree $1$ in $A$ we get a nonzero divisor $z_0^{m_1}$ of degree $(m_1, 0)$ on $A[Mt]$.
This gives injective maps
\begin{equation}\label{conte1}
(M^n)_{xn-m_1}\longby{z_0^{m_1}}
(M^n)_{xn}\longby{z_1^{\widetilde m_0}}
(M^{n+{\tilde m_0}})_{xn+{\tilde m_0}d_1}.
\end{equation}
Now the equality 
\begin{multline*}
  \lim_{n\to \infty}\frac{\ell_k\big((M^n)_{xn-m_1}\big)}{n^{d+e-2}} \leq \liminf_{n\to \infty}\frac{\ell_k\big((M^n)_{xn}\big)}{n^{d+e-2}}  \leq \limsup_{n\to \infty}\frac{\ell_k\big((M^n)_{xn}\big)}{n^{d+e-2}} \leq \lim_{n\to \infty}\frac{\ell_k\big((M^{n+{\tilde m_0}})_{ xn+{\tilde m_0}d_1}\big)}{n^{d+e-2}}
\end{multline*}
implies that 
$$ f_{M}(x) = \lim_{n\to \infty}\frac{\ell_k\big((M^n)_{xn}\big)}{n^{d+e-2}/(d+e-1)!  } = {\bf p}_{i-1}(x).$$

Similarly, by Lemma~\ref{cgl3} there exist integers $m_0 = {\tilde m_0}$ and $ m_1\in \N_{>0}$ such that for all $n\gg 0$,
\begin{align*}
  n(x, 1) - {\tilde m_0}(d_1, 1) \in {\mathfrak{R}}C_{i},\;\;\mbox{and}\;\;
  n(x, 1) + m_1(1, 0) \in {\mathfrak{R}}C_{i}.
\end{align*}
Again choosing non zero divisors $z_1$ and $z_0$ of degrees $(d_1, 1)$ and $(1, 0)$ on $A[Mt]$ we get injective maps
\begin{equation}\label{conte2}
(M^{n-{\tilde m_0}})_{xn-{\tilde m_0}d_1}\longby{z_1^{\widetilde m_0}}
(M^n)_{xn}\longby{z_0^{m_1}}
(M^{n})_{xn+m_1}.
\end{equation}
Again by (\ref{cont}) 
$$\lim_{n\to \infty}\frac{\ell_k\big((M^n)_{xn+m_1}\big)}{n^{d+e-2}/(d+e-1)! } = {\bf p}_i(x) = \lim_{n\to \infty}\frac{\ell_k\big((M^{n-{\tilde m_0}})_{xn - {\tilde m_0}d_1}\big)}{n^{d+e-2}/ (d+e-1)! }.$$
Therefore,
$$ f_{M}(x) = \lim_{n\to \infty}\frac{\ell_k\big((M^n)_{xn}\big)}{n^{d+e-2}/ (d+e-1)! } = {\bf p}_i(x).$$
This proves the continuity of $f_{M}$ at $x=d_i$ and therefore on the interval $(d_1, \infty)$

To prove the assertion of the degrees of ${\bf p}_j(x)$, we choose $E$ as in Section~1.
By (\ref{degree})
$$\ell_k((M^n)_m) \leq {{n+e-1}\choose{e-1}}\ell_k(A_{m+c_0n})$$
which implies 
that  each ${\bf p}_j(x)$ is a polynomial of degree $\leq d-1$.

Now we consider the polynomial ${\bf p}_l(x)$, where by our convention  $d_l = d_M$.

We recall that for  $m\gg 0$, we have  $\ell_k(A_m) = {\tilde e_0}m^{d-1}+{\tilde e_1}m^{d-2}+\cdots +{\tilde e_{d-1}}$, 
the  Hilbert polynomial of $A$.
On the other hand for all $x>d_M$
$${\bf p}_l(x) = \lim_{n\to \infty}\frac{\ell_k((M^n)_{\lfloor xn\rfloor}}{n^{d+e-2}} \geq 
\lim_{n\to \infty}\frac{\ell_k((E^n)_{\lfloor xn\rfloor}}{n^{d+e-2}} \geq \frac{
{\tilde e_0}(x-d_M)^{d-1}+{\tilde e_1}(x-d_M)^{d-2}+\cdots +{\tilde e_{d-1}}}{(e-1)!}.$$
Therefore  the polynomial ${\bf p}_l(x)$ is a polynomial of degree $\geq d-1$ and hence of degree $= d-1$.

\end{proof}

  \begin{defn}\label{adic-density}The 
{\em adic density function} of $M$ is the function
$$f_{M}:\R\longto \R_{\geq 0}\quad\mbox{given by}\quad 
x  \to \lim_{n\to \infty}f_n(x),$$
where $f_n:\R\longto \R_{\geq 0}$ is given by
$$f_n(x) = \frac{\ell_k(({M}^n)_{\lfloor xn\rfloor})}{n^{d+e-2}/(d+e-1)! }.$$
We proved in Theorem~\ref{t2} that it is a piecewise polynomial function 
which is supported on $I[d_1, \infty)$ and is continuous on $I(d_1, \infty)$.
 \end{defn}
 
\vspace{5pt}

\section{Saturated density function} 

We follow Notations~\ref{n1}, where 
 $M$ is a torsion free $\N$-graded module with graded injective map $M\longrightarrow F$ 
 of $A$-modules where $F$ is  a graded free $A$-module. However it may happen that $F$ has a nonzero negative degree components. Therefore even though $M$ is $\N$-graded its saturation 
$(M:_{F}{\bf m}^{\infty})$ may have negative degree components. As a result unlike in the case of ideals in \cite{DRT25}, here the saturation density function for modules may no longer be supported on $\R_{\geq 0}$.

 \vspace{5pt}
 
 \noindent{\bf  Example}.\quad Let $I = (x^2, xy)$ be an ideal in the polynomial ring $A = k[x, y]$. Then $M= I(2)$ is a $\N$-graded module with graded embedding $M = I(2)\longto F = A(2)$, and $F_{-2}\neq 0$ and $F_{-1}\neq 0$. 
 On the othe hand $x \in ((x^2, xy):_{A(2)}
 {\bf m}^{\infty}) = 
 (M:_{F}{\bf m}^{\infty})$. In particular $
 (M:_{F}{\bf m}^{\infty})_{-1}\neq 0$.

We recall the following result from \cite[Corollary~2.3]{Cut11}.
\begin{cor}
 Suppose $W$ is a Noetherian scheme and $\sB = \oplus_{k\geq 0}\sB_k$ a finitely generated graded $\sO_W$-algebra, which is locally generated by $\sB_1$
 as 
 a $\sO_W$-algebra. Let $W' = \mbox{\bf Proj}_W\sB$
and $\alpha:W'\longto W$ the structure morphism.
 Then there exists a positive integer ${\bar k}$ such that $\sB_k = \alpha_*\sO_{W'}(k)$ for all $k\geq {\bar k}$.
 \end{cor}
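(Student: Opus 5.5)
The plan is to reduce to the affine case and use the standard comparison between a graded ring and the sections of twisting sheaves on its Proj, controlled by local cohomology with respect to the irrelevant ideal.

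First, the statement is local on $W$. Both $\sB_k$ and $\alpha_*\sO_{W'}(k)$ are quasi-coherent $\sO_W$-modules: $\alpha$ is quasi-compact and separated, being projective over $W$. Each has a natural map $\sB_k\longto \alpha_*\sO_{W'}(k)$, compatible with restriction to open subsets. Since $W$ is Noetherian, it has a finite affine open cover $W=\bigcup_{i=1}^{t}\Spec R_i$ on which $\sB$ is generated by $\sB_1$. It therefore suffices to find, for each $i$, an integer ${\bar k}_i$ such that the map is an isomorphism over $\Spec R_i$ for $k\geq {\bar k}_i$. We then take ${\bar k}=\max_i {\bar k}_i$; the finiteness of the cover is exactly where Noetherianity of $W$ enters.

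In the affine case, write $B=\Gamma(\Spec R,\sB)=\oplus_{k\geq 0}B_k$. This is a finitely generated graded $R$-algebra generated by $B_1$ over $B_0$, with $B_0$ a finite $R$-algebra, so $B$ is Noetherian. Put $B_+=\oplus_{k\geq1}B_k$ and $X=\mbox{Proj}\,B$. I would invoke the standard four-term exact sequence of graded $B$-modules
$$0\longto H^0_{B_+}(B)\longto B\longto \oplus_{k\in\Z}\Gamma(X,\sO_X(k))\longto H^1_{B_+}(B)\longto 0.$$
It holds because $B$ is generated in degree $1$, so $\sO_X(k)$ is the sheaf associated to $B(k)$ and $\Gamma_*$ is computed by the Čech complex on the cover $D_+(b)$, $b\in B_1$ generators. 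Taking degree-$k$ parts shows that $B_k\longto\Gamma(X,\sO_X(k))$ is an isomorphism as soon as $H^0_{B_+}(B)_k=0$ and $H^1_{B_+}(B)_k=0$.

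The key step, and the main obstacle, is showing that the local cohomology modules $H^i_{B_+}(B)$ vanish in all sufficiently large degrees, for $i=0,1$. For $i=0$ this is easy. $H^0_{B_+}(B)$ is an ideal of the Noetherian ring $B$, hence finitely generated, and it is killed by $B_+^N$ for some $N$. Since $B_+^N$ contains all of $B_{\geq N}$, it follows that $H^0_{B_+}(B)_k=0$ for $k$ beyond $N$ plus its generating degrees. For $i=1$, I would use the vanishing theorem for graded local cohomology over a ring generated in degree one over $B_0$, namely $H^i_{B_+}(L)_k=0$ for $k\gg0$ for any finitely generated graded $L$ (Brodmann--Sharp, Theorem 15.1.5). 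If one wants a self-contained route, compare with Serre's theorem instead: $H^1_{B_+}(B)_k$ is the cokernel of $B_k\to \Gamma(X,\sO_X(k))$. Both are finitely generated over $B_0$, the latter by finiteness of coherent cohomology on a projective $B_0$-scheme. Moreover $\oplus_{k\geq0}\Gamma(X,\sO_X(k))$ is a finite $B$-module. Then $H^1_{B_+}(B)_{\geq0}$ is a finitely generated graded $B$-module killed by a power of $B_+$, and it vanishes in large degrees by the same argument as for $H^0$.

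Combining these, over each $\Spec R_i$ we get $(\sB_k)|_{\Spec R_i}\cong \alpha_*\sO_{W'}(k)|_{\Spec R_i}$ for $k\geq {\bar k}_i$. Since the maps are natural, they glue, which gives the global isomorphism $\sB_k=\alpha_*\sO_{W'}(k)$ for all $k\geq{\bar k}$.
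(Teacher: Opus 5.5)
Your argument is correct. The paper does not prove this statement itself. It quotes it from \cite[Corollary~2.3]{Cut11} and uses it as a black box in the proof of Theorem~\ref{t3}, to get $\sM^n=\pi_*\sO_X(nE)$ for $n\geq\bar n$. So there is no in-paper proof to compare against.

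What you have written is the classical Serre/EGA argument made explicit:
\begin{enumerate}
\item reduce to a finite affine cover, using quasi-coherence of both sides and naturality of $\sB_k\longto\alpha_*\sO_{W'}(k)$;
\item identify the kernel and cokernel of $B\longto\oplus_k\Gamma(X,\sO_X(k))$ with $H^0_{B_+}(B)$ and $H^1_{B_+}(B)$;
\item show both vanish in large degrees.
\end{enumerate}
Each of these steps is sound as you state it.

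Two small remarks. First, under the stated hypothesis that $\sB$ is generated by $\sB_1$ as an $\sO_W$-algebra, $B_0$ is a quotient of $R$. In any case, all you actually use is that $B$ is a finitely generated algebra over the Noetherian ring $R$, hence Noetherian.

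Second, your ``self-contained'' alternative for $H^1$ is not really more elementary than citing Brodmann--Sharp. The finite generation of $\oplus_{k\geq 0}\Gamma(X,\sO_X(k))$ over $B$, for your $X=\mathrm{Proj}\,B$, is itself Serre's theorem. Its proof runs as follows:
\begin{enumerate}
\item embed $X$ in $\P^r_{B_0}$ via finitely many generators of $B_1$;
\item choose a surjection $\oplus_i\sO_{\P^r}(-a_i)\longto$ (pushforward of $\sO_X$);
\item apply Serre vanishing to $H^1$ of the kernel, which gives surjectivity of $\oplus_i\Gamma(\sO_{\P^r}(k-a_i))\longto\Gamma(X,\sO_X(k))$ for $k\gg 0$;
\item use $\Gamma_*(\sO_{\P^r_{B_0}})=B_0[T_0,\ldots,T_r]$;
\item handle the finitely many small $k$ by finiteness of coherent cohomology.
\end{enumerate}
If you spell this out, your proof is complete and independent of the graded local-cohomology vanishing theorem. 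Otherwise the citation of Brodmann--Sharp alone already suffices.
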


The next theorem is a fundamental result which shows that the $\limsup$ in the definition of volume can be replaced by a limit. Under the assumptions that $k$ is algebraically clsoed, it has been proven by Okounkov \cite{Oko03} for an ample divisor $D$ and later by Lazarsfeld-Musta\c{t}\u{a} \cite{LM09} when $D$ is a big divisor. In the following generality, it was proved by Cutkosky \cite{Cut14}.

\begin{thm}\label{volume} Let $X$ be a $(d-1)$-dimensional projective variety over a field $k$ and $D\in \Div(X)$ be an integral divisor. Then
 $$\mathrm{vol}_X(D) = \limsup_{n\to \infty} \frac{h^0(X, \sO_X(nD))}{n^{d-1}/(d-1)!} = \lim_{n\to \infty} \frac{h^0(X, \sO_X(nD))}{n^{d-1}/(d-1)!}.$$
\end{thm}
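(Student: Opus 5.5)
The statement is Theorem~\ref{volume}, which is quoted from \cite{Cut14}. If I had to prove it I would follow the Okounkov / Lazarsfeld--Musta\c{t}\u{a} method with Newton--Okounkov bodies, adapted to an arbitrary field $k$. Write $L=\sO_X(D)$ and $R=\oplus_{n\geq 0}H^0(X,L^n)$.

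\emph{Step 1: the non-big case.} Suppose $\limsup_n h^0(X,\sO_X(nD))/n^{d-1}=0$. Then the $\liminf$ is also $0$, so the limit exists and equals $0$. Hence we may assume $D$ is big, i.e.\ $h^0(nD)\geq cn^{d-1}$ for infinitely many $n$. Bigness gives, via Kodaira's lemma, an ample $H$ and an integer $m_0$ with $m_0D-H$ effective. Since $aH$ is very ample with nonzero sections for all $a\gg0$, the semigroup $\{n\mid H^0(nD)\neq 0\}$ contains all sufficiently large integers. So there is no periodicity obstruction to an honest limit over all $n$, rather than only along an arithmetic progression.

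\emph{Step 2: valuation and semigroup.} We may replace $X$ by its normalization, since $h^0$ changes only by a lower order term after twisting by the conductor. Choose a closed point $q$ that is regular on $X$, and a regular system of parameters $u_1,\dots,u_{d-1}$ of $\sO_{X,q}$. Let $\nu$ be the associated rank $d-1$ lexicographic monomial valuation on $k(X)$, with values in $\Z^{d-1}$ and residue field $k(q)$, a finite extension of $k$. Trivialize $L$ at $q$ so that each $H^0(nL)$ sits inside $k(X)$, and put
$$\Gamma=\{(\nu(s),n)\mid 0\neq s\in H^0(nL)\}\subseteq \Z^{d-1}\times\N .$$
The key counting identity compares $h^0(nL)$ with the $k$-dimensions of the successive quotients $H^0(nL)_{\geq\gamma}/H^0(nL)_{>\gamma}$. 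Each quotient embeds $k$-linearly into $k(q)$ via the leading coefficient. The difficulty is that these images are only $k$-subspaces of $k(q)$, so their dimension lies between $1$ and $[k(q):k]$. Cutkosky's device is to stratify: let $\Gamma^{(t)}$ be the set of $(\gamma,n)$ for which this dimension is $\geq t$, for $1\leq t\leq [k(q):k]$. Multiplicativity of leading coefficients shows each $\Gamma^{(t)}$ is a semigroup ideal over $\Gamma$ (closed under adding $\Gamma$). This yields $h^0(nL)=\sum_t \#\Gamma^{(t)}_n$.

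\emph{Step 3: convex geometry.} Show that $\Gamma$ is contained in a strongly convex cone, i.e.\ $\nu(s)\leq bn$ coordinatewise, using ampleness of $H$ together with a flag of subvarieties through $q$. Show that $\Gamma$ generates $\Z^{d}$ as a group, using Step 1 and the fact that sections of a very ample $aH$ realize all unit vectors $e_i$. Then the Khovanskii/Okounkov semigroup lemma gives $\#\Gamma_n/n^{d-1}\to \mathrm{vol}(\Delta)$, where $\Delta$ is the Newton--Okounkov body. The same argument, applied to each $\Gamma^{(t)}$ as a module over $\Gamma$ containing a translate of a full-rank subsemigroup, gives limits $\mathrm{vol}(\Delta^{(t)})$. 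Summing over $t$ shows that $h^0(nD)/n^{d-1}$ converges, and therefore the $\limsup$ equals the limit.

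\emph{Main obstacle.} I expect the main difficulty to be the non-closed field, i.e.\ Step 2. One cannot simply base change to $\bar k$, because $X_{\bar k}$ may be reducible or non-reduced. The one-dimensionality of the valuation quotients that Okounkov's argument relies on also fails. So the heart of the proof is the stratified semigroups $\Gamma^{(t)}$ and checking that each is a genuine $\Gamma$-module of full rank; that is where I would spend the effort.
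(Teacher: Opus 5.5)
Your outline is correct and is essentially Cutkosky's argument in \cite{Cut14}: Newton--Okounkov bodies for a monomial valuation at a regular closed point, with the residue-field defect handled by the stratified semigroups $\Gamma^{(t)}$, each nonempty one sandwiched as $(\gamma_0,n_0)+\Gamma\subseteq\Gamma^{(t)}\subseteq\Gamma$. That is exactly the source the paper relies on; the paper gives no proof of its own and only cites Okounkov, Lazarsfeld--Musta\c{t}\u{a} and Cutkosky.

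Two details should be tightened, though neither is a real gap. First, to get $H^0(nD)\neq 0$ for all $n\gg 0$, use $(am_0+j)D\sim a(m_0D-H)+(aH+jD)$, where $aH+jD$ is effective for $0\le j<m_0$ and $a\gg0$; very ampleness of $aH$ alone does not give this. Second, to realize the unit vectors $e_i$ exactly, choose the regular parameters $u_i$ to be ratios $s_i/s_0$ of sections of $aH$ with $a\gg 0$. Such sections exist because $H^0(aH)$ surjects onto the $1$-jets at $q$, which is not automatic over a non-closed field for merely very ample $aH$.
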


 The continuity property  of the volume function (see \cite[Theorem~2.2.44]{Laz04b}  implies that the above notion  extends uniquely to a continuous function
 ${\rm vol}_X:N^1(X)_{\R}\longto \R_{\geq 0}$.

\begin{thm}\label{t3}
The function
${\tilde f}_{M}:\R\longto \R_{\geq 0}$ given by
$$
x\to \limsup_{n\to \infty}\frac{\ell_k(M^n:_{F^n}{\bf m}^{\infty})_{\lfloor xn\rfloor}}{n^{d+e-2}/
 (d+e-1)! }$$
is a continuous function and supported on $I(-c_0, \infty)$, for some  constant $c_0\geq 0$.
Further there is a polynomial ${\tilde P_M}(x)$ of degree $d-1$ such that 
${\tilde f}_M(x) = {\tilde P_M}(x)$ for all $x\geq  d_M$.
\end{thm}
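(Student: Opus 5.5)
Following the analogous argument for ideals in \cite{DRT25}, I would interpret $\tilde{M^n}$ geometrically and then apply Theorem~\ref{volume} together with the continuity of the volume function.

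Set $\sB = {\rm Sym}_A(F)$. This is a polynomial ring over $A$, and we regard it as $\N$-graded by $n$, with $\sB_n = F^n$. The Rees algebra $A[Mt]$ is a graded quotient of ${\rm Sym}_A(M)$, so it is generated in $t$-degree $1$. Let $W' = \mbox{\bf Proj}_{\,{\rm Spec}\,A}(A[Mt])$ and let $\alpha: W' \to {\rm Spec}\,A$ be the structure map. Its generic fibre is $\P^{e-1}$, so $\dim W' = d+e-1$.

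The $A$-grading passes to the bigrading on $A[Mt]$. Taking Proj with respect to the total bigrading, I would use the projective variety $Y = {\rm Proj}$ of a diagonal subalgebra $A[Mt]_{\Delta(c,1)}$ with $c > d_M$; this is a standard graded domain of dimension $d+e-1$. On $Y$ there are two divisor classes: $\sO(1,0)$, coming from $A$, and $\sO(0,1)$, coming from $t$. Using local cohomology,
\[
(M^n:_{F^n}{\bf m}^\infty)_m = H^0\bigl(U, \widetilde{M^n}(m)\bigr),
\]
where $U$ is the punctured spectrum. Since $M^n \subseteq F^n$ and $\depth A$ may be small, I would instead realise the saturation as the global sections $H^0(Y, \sO_Y(mH_1 + nH_2))$ for $n \gg 0$. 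This uses Cutkosky's Corollary~2.3 applied over the punctured spectrum (which is where $F^n$ and $M^n$ agree after saturation), together with the fact that $H^0_{\bf m}$ kills exactly the difference. With $m = \lfloor xn \rfloor$ this gives
\[
g_n(x) \approx \frac{h^0\bigl(Y, \sO(\lfloor xn\rfloor H_1 + nH_2)\bigr)}{n^{d+e-2}/(d+e-1)!}.
\]
The subtlety is that $\dim Y = d+e-2$, which is exactly why this normalisation is the right one. For rational $x = p/q$, restricting to $n$ divisible by $q$ and applying Theorem~\ref{volume} to $D = pH_1 + qH_2$ shows the limit equals ${\rm vol}_Y(xH_1 + H_2)$.

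The main obstacle is passing from rational to real $x$ and from subsequences to full limits. For this I would use the monotonicity sandwich of Theorem~\ref{t2}: multiplication by $z_0 \in A_1$ gives injections between degrees $m$ and $m+1$ on $\tilde{M^n}$, while $z_1 \in M_{d_1}$ shifts $(m,n)$ to $(m+d_1, n+1)$. Together these bound $g_n(x)$ between values at nearby rational points. Continuity of ${\rm vol}_Y$ on $N^1(Y)_\R$ then gives both existence of the limit and continuity of $\tilde f_M$.

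For the support, $\tilde{M^n} \subseteq F^n$, and $F^n$ vanishes below degree $-c_0 n$, where $-c_0$ is the least generating degree of $F$ (with $c_0 \geq 0$). Hence $\tilde f_M = 0$ for $x < -c_0$.

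For $x \geq d_M$, I would show that $\tilde{M^n}$ and $M^n$ agree in degrees $m \geq d_M n + O(1)$, up to lower-order error. Here I would invoke Kodiyalam's linear regularity bound \cite[Corollary~2]{Kodi00}: ${\rm reg}(M^n) \leq d_M n + b$, so $H^0_{\bf m}(F^n/M^n)$ vanishes in degrees above $d_M n + b'$. This gives $\tilde f_M(x) = f_M(x) = {\bf p}_l(x)$ for $x > d_M$, and by continuity also at $x = d_M$. Theorem~\ref{t2} shows ${\bf p}_l$ has degree $d-1$, so we may take $\tilde P_M = {\bf p}_l$.
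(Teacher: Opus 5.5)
Your continuity and support arguments follow the paper's route. You write $\tilde f_M(x)=(d+e-1)\,{\rm vol}_X(xH+E)$ on $X=\mathbf{Proj}_V(\oplus_n\sM^n)$. Work there rather than on the Proj of a diagonal subalgebra, where $H$ and $E$ are not a priori defined, and note the factor $d+e-1$ from the normalisation. Continuity then comes from continuity of the volume, and the support from $F^n$ vanishing below degree $-c_0n$.

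Your rational-to-real sandwich is sound, and it is more careful than the paper, which just asserts the volume formula for real $x$. Indeed $z_0\in A_1$ and $z_1\in M_{d_1}$ map $\tilde{M^n}_m$ injectively into $\tilde{M^n}_{m+1}$ and $\tilde{M^{n+1}}_{m+d_1}$. Comparing $(\lfloor xn\rfloor,n)$ with $(pk,qk)$ for rationals $p/q$ just below and just above $x$ then gives the limit.

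\textbf{The gap: from saturations to sections.} The equality $(M^n:_{F^n}{\bf m}^\infty)_m=H^0(V,\sM^n(m))$ fails when ${\rm depth}\,A=1$. Your parenthetical reason is also false: $M^n$ and $F^n$ do not agree on the punctured spectrum in general, e.g.\ for an ideal with nonempty zero locus in $V$. You need the paper's estimate. Since $H^0_{\bf m}(M^n)=0$, we have $\oplus_mH^0(V,\sM^n(m))/M^n\cong H^1_{\bf m}(M^n)$. Also $\tilde{M^n}/M^n=H^0_{\bf m}(F^n/M^n)$ is the kernel of $H^1_{\bf m}(M^n)\to H^1_{\bf m}(F^n)$. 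Hence
\[
0\le h^0(V,\sM^n(m))-\ell_k(\tilde{M^n}_m)\le \ell_k\big(H^1_{\bf m}(F^n)_m\big)\le h_1n^{e-1}.
\]
The last bound holds for $F$ of rank $e$, as in the paper's proof, and uses that $H^1_{\bf m}(A)$ has finite length because $A$ is a domain of dimension $\ge 2$. As $d\ge2$ the error is $o(n^{d+e-2})$. Only then do Cutkosky's Corollary~2.3 over $V$ and the projection formula give $H^0(V,\sM^n(m))=H^0(X,\sO_X(mH+nE))$ for $n\ge\bar n$.

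\textbf{The polynomial on $[d_M,\infty)$: a different route.} The paper notes that the surjection $\oplus_i\sO_V(-d_i)^{a_i}\to\sM$ makes $d_MH+E$ globally generated, hence nef. Asymptotic Riemann--Roch then turns ${\rm vol}(xH+E)$, for $x\ge d_M$, into a polynomial with explicit intersection-number coefficients; these are used in Remark~\ref{r2} for the mixed multiplicities. Kodiyalam's bound enters only later, in Theorem~\ref{l1}, to identify this polynomial with $P_M$.

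You instead show $\tilde{M^n}_m=(M^n)_m$ for $m\ge d_Mn+b$ and import polynomiality and degree from Theorem~\ref{t2}. This avoids intersection theory and yields $\tilde P_M={\bf p}_l=P_M$ at once, but gives no intersection-theoretic formula. It also needs slope exactly $d_M$. Theorem~\ref{kodi} as quoted only gives ${\rm reg}(M^n)\le\tau n$, and with $\tau>d_M$ you would learn nothing about $\tilde f_M$ on $[d_M,\tau]$.

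The sharper bound is true and easy. The $(\ast,n)$-strand of a bigraded free resolution of $A[Mt]$ over $T=k[X_1,\dots,X_r,Y_1,\dots,Y_s]$, with $\deg Y_j=(d_j,1)$, is a graded free resolution of $M^n$ over $k[X_1,\dots,X_r]$. Each $T(-a,-b)_{(\ast,n)}$ is generated in degrees $\le a+d_M(n-b)$, so ${\rm reg}(M^n)\le d_Mn+C$. With this, and $H^0_{\bf m}(F^n/M^n)\subseteq H^1_{\bf m}(M^n)$ vanishing in degrees $\ge{\rm reg}(M^n)$, your last step is complete.
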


\begin{proof}We consider the embeddding $M\longto F$ of graded modules, where $F$ is a free $A$-module of rank $e$. We write 
$$M\subseteq F = A(-c_0)\oplus A(-c_1)\oplus \cdots 
\oplus A(-c_{e-1}),$$
where $-c_0\leq \cdots \leq -c_{e-1}$.
In particular 
$F$ has no nonzero component of degree $<-c_0$.

Let $V = {\rm Proj}~A$ and let $\sM$ denote the sheaf of $\sO_V$-modules associated to the graded modules $M$.
 Since $A$ is an integral domain we know $H^0_{\bf m}(A) = 0$ and there exists sequence of graded maps of $k$-vector spaces
\begin{equation*}0\longto \oplus(H^0_{\bf m}(A))_m \longto 
\oplus_{m\geq 0}A_m\longto \oplus_{m\in \Z}H^0(V, \sO(m))\longto \oplus_{m\in \Z}(H^1_{\bf m}(A))_m \longto 0.\end{equation*}
 So 
 $\ell_k(\oplus_{m\in \Z}(H^1_{\bf m}(A))_m) < \infty$ and in particular there exists a constant $h_1$ such that for all $m\in \Z$  and $n\geq 0$ we have 
 $$\ell_k(H^0_{\bf m}(F^n))_m = 0, \quad 
 \ell_k(H^1_{\bf m}(F^n))_m \leq  h_1n^{e-1}.$$

Now the short exact sequence of $A$-modules
 $$0\longto M^n\longto F^n\longto F^n/M^n\longto 0$$
 gives  the graded exact sequence of $k$-vector spaces
 \begin{equation}\label{lge0}
 0\longto \oplus_{m\in \Z} (H^0_{\bf m}(F^n/M^n))_m \longto 
\oplus_{m\in \Z} (H^1_{\bf m}(M^n))_m\longto 
\oplus_{m\in \Z} (H^1_{\bf m}(F^n))_m,
\end{equation}
where by definition 
$ (H^0_{\bf m}(F^n/M^n))_m =
((M^n:_{F^n}{\bf m}^{\infty})_m/M^n)_m$,
so we have 
\begin{equation}\label{lge1}
\ell_k((H^1_{\bf m}(M^n))_m) 
 =
\ell_k({(M^n:_{F^n}{\bf m}^{\infty})_m})-\ell_k({(M^n)_m}) +O(n^{e-1}).\end{equation}

On the other hand, for a fixed $n\geq 0$, 
we have the graded exact sequence of $A$-modules
\begin{equation}\label{lge2}0\longto H^0_{\bf m}(M^n) \longto 
M^n\longto \oplus_{m\in \Z}H^0(V, \sM^n(m))\longto H^1_{\bf m}(M^n) \longto 0.\end{equation}
So
 $\ell_k((H^1_{\bf m}(M^n))_m) = \ell_k(H^0(V, \sM^n(m)))-\ell_k((M^n)_m)
 $, using (\ref{lge1}) we get  
\begin{equation}\label{saturation}
0\leq \ell_k(H^0(V, \sM^n(m))
-\ell_k((M^n:_{F^n}{\bf m}^{\infty})_m)
\leq  O(n^{e-1})\quad\mbox{for all}\quad m, n.
\end{equation}

Now we can interpret 
$\ell_k(H^0(V, \sM^n(m))$ in terms of volume function as follows.
Let $\sB = \oplus_{n\geq 0}\sM^nt^n$ denote the standard graded $\sO_V$-algebra generated by $\sM t$.
This gives  a proper  map  
$\pi:X= {\bf Proj}_V{\sB} \longto V$.  
Let $H$ denote  the pull back of a hyperplane section on $V$ then 
$\sO_X(H) = \pi^*\sO_V(1)$. Further there is a canonical  surjective map $\pi^*(\sM) \longto 
\sO_X(1)  = \sO_X(E)$ of $\sO_X$-modules, where 
$\sO(E)$ is a Cartier but (unlike in the ideal case)  not necessarily an effective 
divisor  on $X$. However,
(see \cite[Corollary~2.3]{Cut11}), still we have ${\bar n}$ such that  for $n\geq {\bar n}$ we have $\sM^n = \pi_*\sO_X(nE)$.
Combining this with the projection formula we get that for all $n\geq {\bar n}$
$$H^0(X, \sO_X(mH+nE)) = 
H^0(X, \pi^*\sO_V(m)\tensor_{\sO_X}\sO_X(nE)) 
= H^0(V, \sM^n(m)).$$

Since $X$ is a $d+e-2$ dimensional projective variety, where $d\geq 2$   the volume function 
is a well defined continuous function on $N^1(X)_{\bf \R}$, such that  volume function on the set $\{xH+E\mid x\in \R\}$ is given by 
\begin{equation}\label{volume}{\rm vol}_X(xH+E) = \lim_{n\to \infty}
\frac{h^0(X, \sO(\lfloor xn\rfloor H+nE))}{n^{d+e-2}/(d+e-2)!}\quad\mbox{for all}\;\; x\in \R.\end{equation}

Therefore, for all $x\in \R$, by (\ref{saturation}) 
$${\tilde f}_M(x)  =
\limsup_{n\to \infty}\frac{\ell_k(M^n:_{F^n}{\bf m}^{\infty})_{\lfloor xn\rfloor}}{n^{d+e-2}/(d+e-1)!  } = (d+e-1)
{\rm vol}_X(xH+E)
$$
and hence $f_M$ is a continuous function on $\R$. 
Since $F^n$ has no nonzero component of degree $<-nc_0$, the function ${\tilde f_M}$ is supported on the interval $I[-c_0, \infty)$.

Now let $d_1< d_2< \cdots < d_l=d_M$ denote the degrees of a set of homogeneous generators of $M$ and let $a_i$ denote the number of degree $d_i$ generators.
Then we have a canonical surjective map of $\sO_V$-modules
$$\sO_V(-d_1)^{\oplus a_1}\oplus
\sO_V(-d_2)^{\oplus a_2}\oplus
\cdots \oplus 
\sO_V(-d_M)^{\oplus a_l}\longto \sM$$
which gives 
surjective maps of $\sO_X$-modules
$$\pi^*\big(\sO_V(d_M-d_1)^{\oplus a_1}\oplus
\sO_V(d_M-d_2)^{\oplus a_2}\oplus
\cdots \oplus 
\sO_V^{\oplus a_l}\big)\longto \pi^*(\sM(d_M))\longto  \sO_X(d_MH+E).$$

In particular  $\sO_X(d_MH+E)$ is 
globally generated and hence $d_MH+E$ is a  nef divisor. Now 
by the
asymptotic Riemann-Roch, if $D$ is a nef divisor in $N^1(X)_{\R}$ then  
$$h^0(X, \sO_X(nD)) = \frac{(D^{d+e-2})}{(d+e-2)!}\cdot n^{d+e-2}+ O(n^{d+e-3}).$$

Then for $x>d_M$ 
\begin{equation}\label{satpoly}{\tilde f}_M(x) = \sum_{i=0}^{d+e-2}(-1)^i\frac{(d+e-1)!}{(d+e-2-i)!i!}(H^{d+e-2-i}\cdot E^i)x^{d+e-2-i}.\end{equation}

Since $H^{j}=0$ for $j>d-1$,  we can write it as  
\begin{equation}\label{satpoly1}{\tilde f}_M(x) = (d+e-1)!\left[\frac{(H^{d-1}\cdot (E)^{e-1})}{(d-1)!(e-1)!}x^{d-1}
+\frac{(H^{d-2}\cdot (E)^{e})}{(d-2)!(e)!}x^{d-2}+\cdots +\frac{((E)^{d+e-2})}{(d+e-2)!}\right]
.\end{equation}

On the other hand, for $x\gg 0$ the function $f_M(x)$ is also a polynomial of degree $d-1$, so the inequality  $f_M\leq {\tilde f_M}$ implies that  the degree of the above polynomial is exactly $d-1$.
\end{proof}

\vspace{5pt}

\begin{defn}\label{saturated-density}
 The {\em saturated density function} of $M$ is the function  
$${\tilde f}_M:\R\longto \R_{\geq 0} \quad\mbox{given by}\quad 
x \to  \lim_{n\to \infty}g_n(x),$$
 where 
  ${g}_n:\R\longto \R_{\geq 0}$ is the function given by
$${g}_n(x) = \frac{\ell_k((\tilde {M^n})_{\lfloor xn\rfloor})}{n^{d+e-2}/(d+e-1)! },\;\;\mbox{where}\;\;
{\tilde {M^n}} = (M^n:_{F^n}{\bf m}^{\infty}).$$
We know by Theorem~\ref{t3} that it is a continous function and is 
 supported on $I[-c_0, \infty)$, for some constant $c_0\geq 0$.
\end{defn}

 \section{$\varepsilon$-density functiom}
 
We recall that in our context Notationa~\ref{n1}, 
for any choice of embedding of $M$ in a graded free module $F$ the $A$-module $M^n = {\rm Sym}^n_AN/{\rm Tor}_A({\rm Sym}^n_AN)$.  

We recall a fundamental result due to V. Kodiyalam.
\begin{thm}\label{kodi}\cite[Corollary~2]{Kodi00} Let $N$ be a finitely generated graded module over a standard graded  ring $A$.
Let $S_n(N) = {\rm Sym}^n_AN/{\rm Tor}_A({\rm Sym}^n_AN)$, where 
${\rm Sym}^n_AN$ denotes the $n^{th}$ symmetric power of $N$ as an $A$-module.
Let 
$${\rm reg}(N) = \max\{a_i+i\mid a_i\;\; \mbox{is the largest integer such that}\;\;H^i_{\bf m}(N)_{a_i}\neq 0\}.$$
Then
there is an integer $\tau$ such that
${\rm reg}(S_n(N))\leq \tau n$ for all $n\in \N$.
\end{thm}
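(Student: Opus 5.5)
The plan is to bound the regularity of every $S_n(N)$ at once by realising them as the $n$-th strands of one finitely generated bigraded algebra. That algebra has a single finite bigraded free resolution, and taking strands of this resolution gives resolutions of all the $S_n(N)$ with uniformly controlled shifts. Put
$$R=\bigoplus_{n\geq 0}S_n(N)=\mathrm{Sym}_A(N)/\mathcal T,$$
where $\mathcal T=\bigoplus_n{\rm Tor}_A({\rm Sym}^n_AN)$ is the $A$-torsion of the symmetric algebra. Since ${\rm Sym}_A(N)$ is bigraded and $\mathcal T$ is a bigraded ideal, $R$ is a finitely generated bigraded $k$-algebra. Exactly as in \S\ref{ss3}, write $A=k[X_1,\ldots,X_r]/J$ and take homogeneous generators of $N$ of degrees $d_1,\ldots,d_s$. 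Then $R$ is a bigraded quotient of the polynomial ring $T=k[X_1,\ldots,X_r,Y_1,\ldots,Y_s]$ with $\deg X_i=(1,0)$ and $\deg Y_j=(d_j,1)$. By \cite[Proposition~8.18]{MS05}, $R$ has a finite bigraded free resolution
$$0\to\bigoplus_j T(-a_{tj},-b_{tj})^{\beta_{tj}}\to\cdots\to\bigoplus_j T(-a_{1j},-b_{1j})^{\beta_{1j}}\to T\to R\to 0 .$$
Set $a=\max_{i,j}a_{ij}$ and $d_N=\max_j d_j$.

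Next I take the degree-$n$ strand in the second grading. Because all maps are bigraded, this gives an exact complex of graded $B=k[X_1,\ldots,X_r]$-modules resolving $R_{(*,n)}=S_n(N)$. Each strand $T(-a_{ij},-b_{ij})_{(*,n)}$ is a free $B$-module with basis the monomials in the $Y$'s of total degree $n-b_{ij}$. These basis elements sit in $X$-degrees $a_{ij}+\sum_j(\text{degrees } d_j \text{ of the } Y\text{-factors})\leq a+d_Nn$. So for every $n$ we get a graded free $B$-resolution of $S_n(N)$ of length at most $t$, independent of $n$, all of whose generators have degree at most $a+d_Nn$.

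Finally I convert this into the regularity bound. Local cohomology of $S_n(N)$ with respect to ${\bf m}$ agrees with local cohomology with respect to $(X_1,\ldots,X_r)B$, since ${\bf m}$ is the image of that ideal and local cohomology is invariant under this change of rings. Hence ${\rm reg}$ computed over $A$ equals ${\rm reg}$ over $B$. The standard bound for a finite graded free resolution over a polynomial ring gives ${\rm reg}(S_n(N))\le\max_i(\text{max shift at step }i-i)\le a+d_Nn$. For $n\ge1$ this is at most $(a+d_N)n$, so $\tau=\max\{a+d_N,\,{\rm reg}(A)\}$ (taking $\tau$ nonnegative) gives the linear bound. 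The case $n=0$ only involves $S_0(N)=A$, and it needs the convention that ${\rm reg}(A)\le 0$ or that the statement is read for $n\ge1$; I would state the bound for $n\ge 1$.

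The step needing the most care is the strand argument: one has to check that the $n$-th strand of a bigraded resolution is again exact and consists of free $B$-modules, and that the $X$-degrees of their generators grow at most linearly in $n$. Exactness is immediate because strands of a bigraded exact complex are exact. Freeness over $B$ holds because $T=B[Y_1,\ldots,Y_s]$. Linearity comes from each $Y_j$ having first degree $d_j\le d_N$. Once these three points are in place, everything else is routine.
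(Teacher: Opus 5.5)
Your proof is correct, and it is the standard proof of this linear bound; the paper gives no argument of its own here, only the citation of Kodiyalam. You take a finite bigraded free resolution of $\bigoplus_n S_n(N)$ over $k[X_1,\ldots,X_r,Y_1,\ldots,Y_s]$, and its strands in the second degree are graded free resolutions of the $S_n(N)$ over $k[X_1,\ldots,X_r]$, of bounded length and with all shifts at most $a+d_Nn$.

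Two points need tidying. First, passing from $a+d_Nn$ to $(a+d_N)n$ for $n\ge 1$ needs $a\ge 0$. This is automatic when $N$ is $\mathbb{N}$-graded, and otherwise you can replace $a$ by $\max\{a,0\}$; the ${\rm reg}(A)$ term in your $\tau$ is then not needed. Second, as you observed, $n=0$ must be excluded: for a domain $S_0(N)=A$, and ${\rm reg}(A)$ can be positive (it is $2$ for a plane cubic).
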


\noindent{\bf Lebesgue's dominated convergence theorem}. {\em Let  $\{f_n:\R\longto \R\}_n$ be a sequence of measurable functions. Suppose that the sequence converges pointwise to a function $f:\R\longto\R$ and  the sequence $\{f_n\}_n$ is dominated by an integrable function 
 $g:\R\longto \R$, {\em i.e.},
$|f_n(x)|\leq g(x)$
for all points $x\in \R$.
 Then $f$ and $\{f_n\}_n$ are integrable and 
$$\lim_{n\to \infty} \int_{\R}f_n d\mu = \int_{\R} \lim_{n \to \infty} f_n d\mu= 
\int_{\R}  f d\mu $$
}

\begin{thm}\label{l1} 
 The function $f_{\varepsilon(M)}:\R\longto \R_{\geq 0}$ given by
 $$x\to \lim_{n\to \infty} \frac{\ell_k(({\tilde M^n}/M^n)_{\lfloor xn\rfloor})}{n^{d+e-1}/ (d+e-1)! }$$
is a well defined compactly supported function with support in $I[-c_0, d_M]$, where $c_0\geq 0$ is given as in Theorem~\ref{t3} such that 
\begin{enumerate}
                                                                            \item the function $f_{\varepsilon(M)}$                                                                    is continous everywhere possibly except at $x=d_1$, and  
\item 
$$\varepsilon(M) = 
\int_{-\infty}^{\infty}f_{\varepsilon(M)}(x)dx = 
\int_{-c_0}^{d_M}f_{\varepsilon(M)}(x)dx = 
 \int_{-c_0}^0{\tilde f}_M(x)dx +
 \int_{0}^{d_M}f_{\varepsilon(M)}(x)dx.$$
\end{enumerate}
\end{thm}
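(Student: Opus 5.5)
The plan is to write $f_{\varepsilon(M)}$ as the difference ${\tilde f}_M - f_M$ and then pass to the integral by dominated convergence, using Theorem~\ref{kodi} to control the support. The normalization in the statement should be read as $n^{d+e-2}/(d+e-1)!$, as in the introduction; only then does $\int f_n(\varepsilon)\,dx$ carry the factor $n^{d+e-1}$.

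\textbf{Pointwise limit and continuity.} Since $M^n\subseteq {\tilde M^n}$, for every $x$ we have
$$f_n(\varepsilon)(x)=g_n(x)-f_n(x).$$
Theorem~\ref{t3} gives $\lim_n g_n(x)={\tilde f}_M(x)$ as a genuine limit, because of the volume interpretation. Theorem~\ref{t2} gives $\lim_n f_n(x)=f_M(x)$. Hence $f_{\varepsilon(M)}={\tilde f}_M-f_M$ is well defined. It is continuous off $x=d_1$, since ${\tilde f}_M$ is continuous everywhere and $f_M$ is continuous off $d_1$.

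\textbf{Support.} For $x<-c_0$ both $g_n(x)$ and $f_n(x)$ vanish, because $F^n$ has no components in degrees $<-c_0n$. For $x<0$ the module $M^n$ is $\N$-graded, so $f_{\varepsilon(M)}={\tilde f}_M$ on $I[-c_0,0)$; this gives the last splitting of the integral.

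The upper bound $d_M$ is the crux. I would show that $({\tilde M^n}/M^n)_m=H^0_{\bf m}(F^n/M^n)_m=0$ for $m>\tau n$ using Theorem~\ref{kodi}. From the exact sequence (\ref{lge0}), $H^0_{\bf m}(F^n/M^n)_m$ injects into $H^1_{\bf m}(M^n)_m$, and the latter vanishes for $m\geq \mathrm{reg}(M^n)\le\tau n$. This only yields support in $I[-c_0,\tau]$ with a uniform bound, which is enough for dominated convergence.

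To get the sharper bound $d_M$, I would use that $f_M(x)={\tilde f}_M(x)=P_M(x)$ for $x\geq d_M$. This identity is observed in the introduction and should come from comparing Corollary~\ref{HT} with (\ref{satpoly1}), since both are polynomials determined by the mixed multiplicities of $A[Mt]$. Concretely, for $x> d_M$ the divisor $xH+E$ is ample, because $d_MH+E$ is nef and $H$ is nef and big relative to $\pi$. Serre vanishing uniform in $n$, or again Kodiyalam's linear regularity, then gives $H^1_{\bf m}(M^n)_{\lfloor xn\rfloor}=O(n^{d+e-3})$. Combined with (\ref{lge1}), this shows $f_{\varepsilon(M)}(x)=0$ for $x>d_M$. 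I expect matching the two polynomials on $[d_M,\infty)$ to be the main obstacle. My first attempt would be to show directly that $\ell_k(H^1_{\bf m}(M^n)_m)$ is $o(n^{d+e-2})$ uniformly for $m\geq xn$ with $x>d_M$, via the global generation of $\sO_X(d_MH+E)$.

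\textbf{Integral formula.} I would apply Lebesgue's dominated convergence theorem to $f_n(\varepsilon)$ on the compact interval $I[-c_0,\tau]$. For domination, (\ref{degree}) applied to $F$ gives
$$g_n(x)\le C\,n^{e-1}\ell_k(A_{\lfloor xn\rfloor+c_0n})/n^{d+e-2}\le C',$$
uniformly for $x$ in that interval. Then
$$\int_{\R}f_n(\varepsilon)(x)\,dx=\frac{1}{n}\sum_m\frac{\ell_k(({\tilde M^n}/M^n)_m)}{n^{d+e-2}/(d+e-1)!}=\frac{\ell_k({\tilde M^n}/M^n)}{n^{d+e-1}/(d+e-1)!},$$
which converges to $\int f_{\varepsilon(M)}$. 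This yields the formula for $\varepsilon(M)$ with the stated splitting at $0$ and support in $I[-c_0,d_M]$.
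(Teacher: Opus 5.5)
Your handling of the normalization, the pointwise limits, continuity off $d_1$, the lower end of the support, the splitting at $0$, and the dominated convergence step is sound. Your domination of $g_n$ via $\ell_k((F^n)_{\lfloor xn\rfloor})\le\binom{n+e-1}{e-1}\ell_k(A_{\lfloor xn\rfloor+c_0n})$ is more direct than the paper's bound through $h^0(X,n(\tau H+E))$ and asymptotic Riemann--Roch.

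\textbf{The gap.} The gap is the upper end of the support, i.e.\ showing $f_{\varepsilon(M)}=0$ on $(d_M,\infty)$.

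\begin{itemize}
\item You cannot cite the introduction or Remark~\ref{r2} for $f_M=\tilde f_M$ on $I[d_M,\infty)$. The paper obtains that identity, and its interpretation via mixed multiplicities, from this very theorem. Comparing Corollary~\ref{HT} with (\ref{satpoly1}) presupposes exactly what is to be proved, so this is circular.
\item Ampleness of $xH+E$ for $x>d_M$ plus Serre vanishing only controls $H^i(X,\mathcal O_X(\lfloor xn\rfloor H+nE))$ for $i\ge1$. The quantity you need, $H^1_{\bf m}(M^n)_m$, is (for $n\ge\bar n$) the cokernel of $(M^n)_m\to H^0(X,\mathcal O_X(mH+nE))$. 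Bounding it is precisely the claim you are trying to establish.
\item Kodiyalam's theorem gives vanishing only for $m\ge\tau n$, with an unspecified $\tau$ that may exceed $d_M$.
\item A smaller point: $H$ is $\pi$-trivial, not relatively big; it is $E$ that is $\pi$-ample.
\end{itemize}

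\textbf{The missing step.} You already have all the ingredients for a one-line argument, and no ampleness or sharper regularity estimate is needed.

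\begin{enumerate}
\item By Theorem~\ref{t2}, $f_M={\bf p}_l$ is a polynomial on $I[d_M,\infty)$.
\item By Theorem~\ref{t3}, $\tilde f_M=\tilde P_M$ is a polynomial on $I[d_M,\infty)$.
\item Your Kodiyalam step gives $(\tilde{M^n})_m=(M^n)_m$ for $m\ge\tau n$. Hence $g_n(x)=f_n(x)$ for every $x>\tau$ and $n\gg0$, so ${\bf p}_l(x)=\tilde P_M(x)$ for all $x>\tau$.
\item Two polynomials agreeing on an infinite set coincide. So ${\bf p}_l=\tilde P_M$, and $f_{\varepsilon(M)}=\tilde f_M-f_M$ vanishes on all of $I[d_M,\infty)$.
\end{enumerate}

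This is how the paper gets support in $I[-c_0,d_M]$; the rest of your argument then goes through as written.
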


\begin{proof} Assertion $(1)$  follows from the properties
of the adic density function $f_M$ and 
saturated density function ${\tilde f_M}$.

 Note that $M^n = S_n(M)$, where $S_n(M)$ is as in Theorem~\ref{kodi}. Therefore there exists $\tau$  such that $H^1_{\bf m}(M^n)_m = 0$ for all $m\geq \tau n$. Note that $H^0_{\bf m}(M^n)_m = 0$, for all $m$.
Therefore 
\begin{equation}\label{*1}(M^n:_{F^n}{\bf m}^{\infty})_m  = (M^n)_m, \quad\mbox{for all}\quad  m\geq \tau n.\end{equation}
Also, if $c_0\geq 0$ such that $F_m= 0$ for all $m < -c_0$ then 
\begin{equation}\label{*2} g_n(x) = \ell_k(({\tilde M^n})_{\lfloor xn\rfloor}) = 0\quad\mbox{for}\quad x <-c_0.\end{equation}
Therefore $f_n(\varepsilon)(x) = g_n(x) -f_n(x) = 0 $ for all $x$ outside the interval $I[-c_0, \tau]$.

On the other hand by 
Theorem~\ref{t2} and Theorem~\ref{t3} 
if $x\geq d_M$ then
$$f_M(x) = P_M(x)\quad\mbox{and}\quad {\tilde f}_M(x)
= {\tilde P}_M(x),$$ where 
 $P_M(x)$ and ${\tilde P}_M(x)$ are polynomials. Therefore ${\tilde P}_M(x)
 =P_M(x)$, and the $\varepsilon$-density function 
$$f_{\varepsilon}(x) = 
 f_{\tilde M}(x) - f_{M}(x) = 0\quad\mbox{for}\quad  x>d_M.$$ 

 On the other hand, by (\ref{*1}) and (\ref{*2})
$$\varepsilon(M)   = 
\displaystyle{ \lim_{n\to \infty}\sum_{m=-c_0n}^
 {\tau n} \frac{\ell_k(({\tilde M^n}/M^n)_{\lfloor xn\rfloor})}{n^{d+e-1}/ (d+e-1)! }} = 
 \lim_{n\to \infty}\int_{-c_0}^{\tau}(g_n(x)-f_n(x))dx= 
\lim_{n\to \infty}\int_{-c_0}^{\tau}f_n(\varepsilon)(x)dx.$$ 
 
 Now we would like to apply  the Lebesgue dominated convergence theorem.
We note that 
by (\ref{degree}), if $x\in I(0, \tau)$ then   for all $n$ there is $C$ (independent of $x$) such that 
$f_n(x) = |f_n(x)|\leq C$.

Further for any $m/n\in I[-c_0, \tau)$
by (\ref{saturation})
we have
$$ (M^n:_{F^n}{\bf m}^{\infty})_{m}
\leq  h^0(X, \sO_X(mH+nE)) \leq h^0(X, n(\tau H+E)).$$
Wheras by the  asymptotic Riemann-Roch theorem applied to the nef divisor $\tau H+E$, we get 
 a constant $\lambda$ such that $|g_n(x)|\leq \lambda$ 
 for $x\in I[-c_0, \tau]$.

 Therefore for the sequence
 $\{f_n(\varepsilon) = g_n-f_n:\R\longto \R\}_n$ which is supported on the interval $(-c_0, \tau)$ we get 
$$\varepsilon(M) = 
\lim_{n\to \infty}\int_{-c_0}^{\tau}f_n(\varepsilon)(x)dx =
\int_{-c_0}^{\tau}\lim_{n\to \infty}f_n(\varepsilon)(x) 
=
\int_{-c_0}^{\tau}
 f_{\varepsilon(M)}(x)dx = 
\int_{-c_0}^{d_M}
 f_{\varepsilon(M)}(x)dx.$$
\end{proof}

\begin{defn}\label{epsilon-density}
 The {\em epsilon density ($\varepsilon$-density) function} of $M$ is the function $$f_{{\varepsilon}(M)}:\R\longto \R_{\geq 0} \quad\mbox{given by}\quad 
x \to \lim_{n\to \infty}f_n(\varepsilon)(x),$$
where $f_n(\varepsilon):\R \longto \R_{\geq 0}$ is the function 
given by
$$ f_n(\varepsilon)(x) = g_n(x)-f_n(x) = \frac{\ell_k(({\tilde M^n}/M^n)_{\lfloor xn\rfloor})}{n^{d+e-2}/(d+e-1)!}. $$

In particular
$f_{{\varepsilon}(M)}(x)= {\tilde f_M}(x)-f_M(x)$ 
 is continuous on $\R\setminus \{d_1\}$ and is supported on $I[-c_0, d_M]$, for some constant 
$c_0\geq 0$.
\end{defn}

\begin{rmk}\label{r2} In the above theorem we have seen that the functions
 $f_M$ and ${\tilde f_M}$ are the same polynomial function on the interval $(d_M, \infty)$ and given by
 the polynomial as in (\ref{satpoly1}).
 We denote this polynomial as
 $$P_M(x) = \frac{e_{d-1}(A(Mt])}{(d-1)!(e-1)!}x^{d-1}+
 \frac{e_{d-2}(A(Mt])}{(d-2)!(e)!}x^{d-2}+
 \cdots+\frac{e_0(A[Mt])}{(d+e-2)!},$$
 and where  $e_i(A[Mt])$ is known (in \cite{HT03}) as the $i^{th}$ mixed multiplicity of $A[Mt]$. Therefore 
 $$e_{d-i}(A[Mt])
 = (d+e-1)!(-1)^{e+i-2}\left(H^{d-i}\cdot E^{e+i-2}\right).$$

 In particular, if we denote 
 $$P_M(X, Y) = Y^{e-1}\left[\frac{e_{d-1}(A(Mt])}{(d-1)!(e-1)!}X^{d-1}+
 \frac{e_{d-2}(A(Mt])}{(d-2)!(e)!}X^{d-2}Y+
 \cdots+\frac{e_0(A[Mt])}{(d+e-2)!}Y^{d-1}\right],$$
 then  by
 Corollary~\ref{HT},  there is  $n_0\geq 0$ such that 
 $$\ell_k((M^n)_m) = P_M(m,n)+Q_M(m,n)\quad\mbox{
whenever}\quad  m\geq n(d_M)+n_0,$$

where $Q_M(X,Y)$ is a polynomial in $\Q[X, Y]$ of degree $<d+e-2$.
 \end{rmk}

\section{Numerical characterization of integral  dependence of modules}

\begin{thm}\label{t4}Let $N\subseteq M$ be a $\N$ graded modules with notations as in \ref{d1} and \ref{n1}.

If $N$ is a reduction of $M$, that is 
 $M^{n_0+1} = NM^{n_0}$, for some $n_0$, then 
 \begin{enumerate}
  \item $\rank~N = \rank~M = e$.
\item $f_N \equiv f_M$ and 
\item ${\tilde f}_N \equiv {\tilde f}_M$.\end{enumerate}
\end{thm}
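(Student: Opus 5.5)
The plan is to deduce all three assertions from the single relation $M^{n+n_0} = N^nM^{n_0}$ for all $n\geq 0$. This follows from $M^{n_0+1}=NM^{n_0}$ by induction inside ${\rm Sym}_A(F)$: $M^{n_0+n+1}=M\cdot M^{n_0+n}=M N^n M^{n_0} = N^n M^{n_0+1}=N^{n+1}M^{n_0}$.

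\emph{Rank.} Since $N\subseteq M$, we have $\rank N\leq \rank M$. Localize at the zero ideal and set $K={\rm Frac}(A)$. Then $N_K\subseteq M_K\subseteq F_K$ are $K$-vector spaces, and $M_K^{n}$ is the image of ${\rm Sym}^n(M_K)$ in ${\rm Sym}(F_K)$, which equals ${\rm Sym}^n$ of the subspace $M_K$. Its dimension is $\binom{n+e-1}{e-1}$, a polynomial in $n$ of degree $e-1$. If $\dim N_K=e'<e$, then $\dim_K N_K^nM_K^{n_0}\leq \binom{n+e'-1}{e'-1}\binom{n_0+e-1}{e-1}$, which has degree $e'-1$ in $n$. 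This cannot equal $\dim_K M_K^{n+n_0}$ for large $n$. Hence $e'=e$.

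\emph{Adic density.} Clearly $N^n\subseteq M^n$, so $f_N\leq f_M$ pointwise; the two functions are normalized by the same power $n^{d+e-2}$ because the ranks agree. For the reverse inequality, pick a nonzero homogeneous $w\in M^{n_0}$ of some degree $a$. Multiplication by $w$ is injective on ${\rm Sym}_A(F)$, a polynomial ring over a domain, and it maps $(N^n)_m$ into $(N^nM^{n_0})_{m+a}=(M^{n+n_0})_{m+a}$. That gives an inequality in the wrong direction. Instead I use that $M^{n_0}$ is finitely generated, say by homogeneous $w_1,\dots,w_q$ of degrees $a_1,\dots,a_q$. Then
$$\ell_k((M^{n+n_0})_m)\leq \sum_{i=1}^q \ell_k((N^n)_{m-a_i}).$$
This bound carries a factor $q$, so it is still not sharp enough. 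The key step, and the main obstacle, is to get an exact comparison. I would do this with the Rees algebra: $A[Mt]$ is a finite module over $A[Nt]$, generated by $M^j t^j$ for $j\leq n_0$. Both have the same rank over $A[Nt]$, since their fraction fields coincide (by the rank computation, $M_K\subseteq N_K$ after clearing denominators, so the fraction fields agree). So $C=A[Mt]/A[Nt]$ is a finite bigraded $A[Nt]$-module of dimension strictly smaller than $\dim A[Nt]=d+e$. Its torsion-freeness defect is killed by some nonzero homogeneous $s\in A[Nt]$ of bidegree $(b,c)$. Then $s\,(M^n)_m\subseteq (N^{n+c})_{m+b}$ injectively, which gives
$$\ell_k((N^n)_m)\leq \ell_k((M^n)_m)\leq \ell_k((N^{n+c})_{m+b}).$$
Dividing by $n^{d+e-2}$ with $m=\lfloor xn\rfloor$, the shifts $(b,c)$ are negligible. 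On the open chambers this follows from the polynomial description in Theorem~\ref{t1} and the continuity of $f_N$ away from $d_1(N)$; the relevant point is $(\lfloor xn\rfloor+b)/(n+c)\to x$. So $f_N=f_M$ off finitely many points. At the remaining points (the $d_i$ of either module), I would repeat the sandwich argument from the proof of Theorem~\ref{t2}, squeezing with nonzerodivisors of degree $(1,0)$ and $(d_1,1)$. The least generating degrees agree, since $(M^n)_{m}=0$ for $m<d_1(M)n$, and the reduction relation gives equality of the minimal slopes.

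\emph{Saturated density.} I use the geometric description from Theorem~\ref{t3}. Since $A[Mt]$ is finite over $A[Nt]$, the sheaf algebras $\sB_N\subseteq\sB_M$ give a finite surjective morphism $X_M\to X_N$ over $V$. Under this morphism $\sO_{X_N}(1)$ pulls back to $\sO_{X_M}(1)$ and $H$ pulls back to $H$. Because $\dim X_M=\dim X_N=d+e-2$ and the morphism is birational (the function fields agree by the rank argument), volumes are preserved under pullback of big/$\R$-divisors: ${\rm vol}_{X_M}(xH+E_M)={\rm vol}_{X_N}(xH+E_N)$. This uses that $h^0$ of the pullback agrees with $h^0$ of the pushforward twisted, up to lower-order terms, by the finiteness and birationality. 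The formula ${\tilde f}_L(x)=(d+e-1){\rm vol}(xH+E_L)$ from Theorem~\ref{t3} then gives ${\tilde f}_N\equiv{\tilde f}_M$.
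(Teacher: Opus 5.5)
There is a genuine gap at the single point $x=d_1$, and it cannot be repaired, because assertion (2) is false there as literally stated.

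\textbf{Why your sandwich fails at $d_1$.} The sandwich from the proof of Theorem~\ref{t2} works at $d_i$ for $i\ge 2$ because both sides carry known polynomial limits. At $d_1$ the left side carries the zero function, so it only yields $0\le f(d_1)\le \mathbf p_1(d_1)$. Your conductor element $s$ does not help either. To bound $\ell_k((M^n)_{d_1n})$ by lengths of $N$ on the ray of slope $d_1$, you would need $s$ of bidegree $(d_1c,c)$, i.e.\ $0\neq s\in (N_{d_1})^c$ with $s\,(M_{d_1})^j\subseteq (N_{d_1})^{j+c}$. That amounts to the finite extension $\bigoplus_n(N_{d_1})^n\subseteq\bigoplus_n(M_{d_1})^n$ being birational.

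\textbf{A counterexample.} Take $A=k[u,v]$, $M=(u,v)^2$, $N=(u^2,v^2)$. Then $e=1$, $d=2$, $M^2=NM$, and $d_1=2$ for both modules. We have $\ell_k((M^n)_{2n})=2n+1$, while $(N^n)_{2n}$ is spanned by the $u^{2i}v^{2n-2i}$, so $\ell_k((N^n)_{2n})=n+1$. Hence $f_M(2)=4\neq 2=f_N(2)$, even though $f_M(x)=f_N(x)=2x$ for $x>2$.

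The paper's own argument at $d_1$ breaks at the same spot. It asserts that an integral extension of standard graded $k$-algebras preserves multiplicity. For a finite extension of standard graded domains $A'\subseteq B$ one only has $e(B)=[Q(B):Q(A')]\,e(A')$, and $k[u^2,v^2]\subset k[u^2,uv,v^2]$ has degree $2$. What is true, and what your argument proves, is $f_N=f_M$ on $\R\setminus\{d_1\}$, i.e.\ almost everywhere. That suffices for integrals of $f$ and for values at $c>d_{N,M}$.

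\textbf{The rest of your proposal.} Everything else is correct but heavier than the paper's route.
\begin{enumerate}
\item The paper takes a homogeneous $0\neq r_0\in A$ with $r_0M^{n_0}\subseteq N^{n_0}$. Then $r_0M^n=r_0N^{n-n_0}M^{n_0}\subseteq N^n$ for $n\ge n_0$. This multiplier has bidegree $(m_0,0)$, so there is no shift in $n$ to control.
\item For $x>d_1$ you need no sandwich at the $d_i$ at all. By Theorem~\ref{t2}, $f_M$ and $f_N$ are both continuous on $I(d_1,\infty)$, since the least generating degrees agree. Equality off finitely many points therefore extends to the whole interval.
\item For the saturated functions, the same $r_0$ gives $r_0\widetilde{M^n}\subseteq\widetilde{N^n}\subseteq\widetilde{M^n}$. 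Continuity of $\tilde f_M$ (Theorem~\ref{t3}) then finishes.
\item Your finite birational morphism $\phi\colon X_M\to X_N$ with birational invariance of volume is a valid alternative. However, you must check two extra things. First, $\phi$ is everywhere defined, because $\mathcal B_M$ is integral over $\mathcal B_N$. Second, $h^0$ changes only by $O(n^{d+e-3})$, via the cokernel of $\mathcal O_{X_N}\to\phi_*\mathcal O_{X_M}$, which is supported in lower dimension. The elementary sandwich avoids both verifications.
\end{enumerate}
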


\begin{proof}By hypothesis $M^{n+1} = NM^n$ for all $n\geq n_0$ and $M^{n+n_0} = N^nM^{n_0}$ for all $n\geq 0$. If $\rank~N< \rank~M$ then $\rank~NM^n< \rank~M^{n+1}$ for all $n\geq 1$. Therefore 
 $\rank~N = \rank~M = e$ and hence the Assertion~(1).

Now since $\rank~N = e$, the adic density function 
$f_N:\R\longto \R_{\geq 0}$ and the saturated density function ${\tilde f_N}:\R\longto \R_{\geq 0}$ is given as
$$f_N(x) = \lim_{n\to \infty}\frac{\ell_k(({N}^n)_{\lfloor xn\rfloor})}{n^{d+e-2}/(d+e-1)!  }\quad\mbox{and}\quad
{\tilde f_N}(x) = \lim_{n\to \infty}\frac{\ell_k(({\tilde N^n})_{\lfloor xn\rfloor})}{n^{d+e-2}/ (d+e-1)! }
.$$
 
 Moreover,
if $d_1$ is the least degree of a generator of $M$ then so is for $N$ because we have 
  $$0\neq (M^{n+1})_{d_1(n+1)} = 
  N_{d_1}(M^n)_{d_1n}\quad\mbox{which implies}\quad N_{d_1} \neq 0.$$
  Since  $N^{n_0}\subseteq M^{n_0}$ have the same rank,
  there is a  homogeneous element $r_0\in A\setminus \{0\}$ of degree, say $m_0$, such that $r_0M^{n_0}\subseteq N^{n_0}$ and therefore  $r_0M^n\subseteq N^n$ for all $n\geq n_0$.

 \vspace{5pt}
 
 \noindent{\bf Assertion}~(2).\quad 
 Assume $x\in (d_1, \infty)$, more specifically $x\in 
 I(d_j, d_{j+1}]$ for some $j\geq 1$, where we recall that $d_1< d_2<\cdots < d_l = d_M$ are the degrees of a minimal set of homogeneous generators for $M$ and $d_{l+1} = \infty$.
 For  $n\gg 0$ we have 
 $d_jn < \lfloor xn \rfloor -m_0\leq d_{j+1}$.
 Since $r_0$ is a non zerodivisor 
of degree $(m_0, 0)$ in ${\rm Sym}_A(F)$ 
and $(r_0M^n)_{\lfloor xn\rfloor}=
r_0(M^n)_{\lfloor xn\rfloor-m_0}
\subseteq (N^n)_{\lfloor xn\rfloor}$ for all $n> n_0$, we have
 $$\ell_k(({M}^n)_{\lfloor xn\rfloor-m_0})
 \leq \ell_k(({N}^n)_{\lfloor xn\rfloor})\leq 
 \ell_k(({M}^n)_{\lfloor xn\rfloor}).$$
 By Theorem~\ref{t3}
 $$f_M(x) = {\bf p}_j(x) \leq f_N(x)\leq {\bf p}_j(x) = f_M(x).$$
 This implies $f_M(x) = f_N(x)$ for $x\in I(d_1, \infty)$.
 Now if  $x=d_1$ then for all $n\geq 1$, we have
 $$(M_{d_1})^n =
 (M^n)_{d_1n} = (N^{n-n_0}M^{n_0})_{d_1n} = (N^{n-n_0})_{d_1(n-n_0)}(M^{n_0})_{d_1n_0} = (N_{d_1})^{n-n_0}(M_{d_1})^{n_0}.$$

Now 
$\oplus_{n\geq 0} (N_{d_1})^n \longto 
\oplus_{n\geq 0} (M_{d_1})^n$ is an integral extension of standard graded $k$-algebras they have same multiplicity, that means
$$f_M(d_1) =  
\lim_{n\to \infty}\frac{\ell_k((M^n)_{d_1n}}{n^{d+e-2}/(d+e-1)!}
= \lim_{n\to \infty}
\frac{\ell_k((N^n)_{d_1n)}}{n^{d+e-2}/(d+e-1)!} = f_N(d_1).$$
This gives the equality $f_M\equiv f_N$ as 
$f_M(x) = f_N(x) = 0$
for $x\in (0, d_1)$.

 \vspace{5pt}
 
 \noindent{\bf Assertion}~(3).\quad  
 Now for all $n > n_0$ the inclusion $r_0M^{n} \subseteq N^n$ implies that $r_0({\tilde {M^n}})\subseteq
{\tilde {N^n}}\subseteq {\tilde {M^n}}$.
Now for any $x\in \R$
 $$\lim_{n\to \infty}\frac{\ell_k(({\tilde M^n})_{\lfloor xn\rfloor-m_0})}{n^{d+e-2}/ (d+e-1)! }
 \leq \lim_{n\to \infty} \frac{\ell_k(({\tilde N^n})_{\lfloor xn\rfloor})}{n^{d+e-2}/(d+e-1)!  }\leq 
\lim_{n\to \infty} \frac{\ell_k(({\tilde M^n})_{\lfloor xn\rfloor})}{n^{d+e-2}/(d+e-1)!  }.$$

But both extreme limits are equal due to the continuity property of ${\tilde f}_M$ which gives ${\tilde f_M} \equiv {\tilde f_N}$.
\end{proof}

\vspace{5pt}

\begin{rmk}\label{generators}
 Let $c> d_M$ be an integer then 
 $(M_c)^n = (M^n)_{cn}$. 
 This is because if  $m_1 , \ldots, m_s$ are homogeneous generators of $M$ of  degrees $d_1\leq \cdots \leq d_s = d_M$, respectively, then
 $$cn -\sum_jd_ji_j = \sum_j((c-d_j)i_j)\quad\mbox{and}\quad 
 A_{cn-\sum_jd_ji_j} = \prod_{j}(A_{c-d_j})^{i_j}.$$
 Therefore
 $$(M^n)_{cn} = 
 \sum_{i_1+\cdots +i_s=n}(A_1)^{cn-\sum_jd_ji_j}m_1^{i_1}\cdots m_s^{i_s}
 =  \sum_{i_1+\cdots +i_s=n}(A_{c-d_1}m_1)^{i_1}\cdots (A_{c-d_s}m_s)^{i_s}
 = (M_c)^n.$$
 In particular 
 $(M_cA)^n = (M_c)^nA = (M^n)_{\geq cn}$.
\end{rmk}

We recall the following result of \cite[Theorem~3.3]{SUV01}, the result stated here is taken from \cite{HS06}].

\begin{cor}\label{SUV}
Let $A = \oplus_{i\geq 0}A_i\subseteq B = \oplus_{i\geq 0}B_i$ be a homogeneous
inclusion of graded Noetherian rings such that $R = A_0 = B_0$ is a local ring
with maximal ideal ${\bf m}$ and such that $A = R[A_1]$ and $B = R[B_1]$. 
 Set $d = \dim~B$.
Assume further that $\lambda_R(B_1/A_1)< \infty$.
Then for all $n\gg 0$, $\lambda_R(B_n/A_nB_0)$ is a polynomial function
$p_1(n)$ of degree
$d-1$ and 
the polynomial $p_1(n)$ has the form
$$p_1(n) = \frac{e_1(A, B)}{(d-1)!} n^{d-1} + O(n^{d-2}).$$

Further if $B$ is integral over $A$, and $B_{\bf q} = A_{\bf q}$
for every minimal prime ideal ${\bf q}$ in $A$, then $e_1(A, B) = 0$. Conversely, if $B$ is
equidimensional, universally catenary, and $e_1(A, B) = 0$, then $B$ is integral
over $A$ and $B_{\bf q} = A_{\bf q}$ for every minimal prime ideal ${\bf q}$ in $A$.\end{cor}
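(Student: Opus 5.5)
The statement is the result of \cite[Theorem~3.3]{SUV01}, which the paper only recalls. If I were to prove it, I would reduce both parts to the Hilbert function of one finitely generated bigraded module. I would use the ring
$$\mathcal T=\bigoplus_{i,j\geq 0}A_iB_j\,u^j\subseteq B[u].$$
This is a finitely generated bigraded $R$-algebra. It is generated by $A_1$ in bidegree $(1,0)$ and by $B_1u$ in bidegree $(0,1)$, and $\mathcal T\cong A[B_1u]$. Let $\mathcal A=\bigoplus_{i,j}A_{i+j}u^j$ be the subalgebra $A[A_1u]$, and set $\mathcal K=\mathcal T/\mathcal T_{(\ast,0)}\mathcal T'$, the cokernel obtained by comparing consecutive $u$-degrees. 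I would then use the telescoping identity
$$\lambda_R(B_n/A_n)=\sum_{j=1}^{n}\lambda_R\big(A_{n-j}B_j/A_{n-j+1}B_{j-1}\big),$$
together with the filtration $A_n\subseteq A_{n-1}B_1\subseteq\cdots\subseteq B_n$.

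Each summand is a bigraded component of the finitely generated bigraded $\mathcal T$-module $\mathcal K=\mathcal T/A_1 u^{-1}\mathcal T$, suitably shifted. Every such component has finite length, because $B_1/A_1$ has finite length and so $\mathcal K$ is killed by a power of the maximal ideal of $R$. The bigraded Hilbert function of $\mathcal K$ is eventually a polynomial by the Bhattacharya/Hilbert--Serre theorem, as in Corollary~\ref{HT}. Summing along the antidiagonals $i+j=n$ then turns $\lambda_R(B_n/A_n)$ into a polynomial $p_1(n)$ for $n\gg0$. For the degree bound I would compute $\dim\mathcal K$. Since $\mathcal T\cong A[B_1u]$ has dimension $\dim B+1$ and $\mathcal K$ is a proper quotient supported at the maximal ideal of $R$, the diagonal sums have degree at most $d-1$. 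The coefficient of $n^{d-1}/(d-1)!$ is then, by definition, $e_1(A,B)$.

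For the second half, I would use that $e_1(A,B)$ is a multiplicity, namely the leading coefficient of a Hilbert polynomial of $\mathcal K$. By the associativity formula, $e_1(A,B)=0$ if and only if $\dim\mathcal K<d$ in the relevant grading. This in turn means that $\mathcal T$ and $\mathcal A$ agree, modulo nilpotents and up to finite extension, at every top-dimensional prime.

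For the forward direction: if $B$ is integral over $A$ and $A_{\mathbf q}=B_{\mathbf q}$ at minimal primes, then $\mathcal T$ is finite over $\mathcal A$ and they are birational componentwise. Hence $\mathcal K$ vanishes at the generic points of all $d$-dimensional components, so it has smaller dimension and $e_1=0$.

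The converse is where I expect the real obstacle, since it needs a Rees/Böger-type argument. Assume $e_1=0$. I would reduce to $R$ having infinite residue field and pass to a minimal reduction. Equidimensionality of $B$ and universal catenarity guarantee that every minimal prime of $\mathcal T$ has the expected dimension. A nonvanishing of $\mathcal K$ at a minimal prime of $A$, or non-integrality of $B_1$ over $A$, would produce a component of $\mathcal K$ of dimension exactly $d$ and hence a positive contribution to $e_1$. Non-integrality would be detected via the condition $u\mathcal T\not\subseteq\sqrt{\mathcal A}$ on the extended ring. Making this dimension count rigorous is the crux. I would follow Rees' multiplicity-theorem argument, using the dimension formula, which holds because $B$ is universally catenary.
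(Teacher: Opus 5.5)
The paper gives no proof of this statement (it only quotes it), so your sketch has to stand on its own. It has a genuine gap: the converse is left open. The central object is also misdefined.

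Since $A_1u^{-1}\cdot B_1u=A_1B_1$ sits in $B$-degree $2$ and $u$-degree $0$, where $\mathcal T$ has only $A_2$, the set $A_1u^{-1}\mathcal T$ is not contained in $\mathcal T$. The ideal you want is $(A_1u)\mathcal T$, with $A_1u\subseteq B_1u=\mathcal T_{(0,1)}$. Then $\mathcal K=\mathcal T/(A_1u)\mathcal T\cong\mathrm{gr}_{A_1B}(B)$, with components $A_iB_j/A_{i+1}B_{j-1}$ for $j\geq 1$ and $A_i$ for $j=0$.

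Because of that last row, $\mathcal K$ is not killed by a power of $\mathbf m$ and in general has dimension $d$. So both the finite-length claim and ``$e_1=0\Leftrightarrow\dim\mathcal K<d$'' must be stated for the ideal $\mathcal K'=(B_1u)\mathcal K$, the $j\geq 1$ part. Polynomiality should come from the total grading. There $\mathcal T$ is standard graded over $R$, and $\mathcal K'$ is a finitely generated module over $\mathcal T/\mathbf m^N\mathcal T$, where $\mathbf m^NB_1\subseteq A_1$. An eventual bigraded Hilbert polynomial says nothing about the terms with $i$ or $j$ small, and these occur in every antidiagonal. Finally, $\deg p_1\leq d-1$ follows from $\dim\mathcal K'\leq\dim\mathrm{gr}_{A_1B}(B)\leq\dim B$. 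Your ``proper quotient'' reason does not give this, and $\dim\mathcal T=\dim B+1$ can fail (take $A=R=k$, $B=k[x]$).

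The converse is where the real gap lies. The assertion that non-integrality, or $A_{\mathbf q}\neq B_{\mathbf q}$, ``would produce a component of $\mathcal K$ of dimension exactly $d$'' is the entire content of the statement, and you do not prove it. The proposed test $u\mathcal T\not\subseteq\sqrt{\mathcal A}$ is not meaningful: $u\mathcal T\not\subseteq\mathcal T$, and $\mathcal A$ is a subring, not an ideal. No reductions are needed.

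The missing idea is to apply the hypotheses on $B$ to $G=\mathrm{gr}_{A_1B}(B)$ rather than to $\mathcal T$. By Ratliff's theorem, $G$ is equidimensional of dimension $d$: localize at the homogeneous maximal ideal and pass to the extended Rees algebra modulo $t^{-1}$. This is exactly where ``equidimensional and universally catenary'' enter. The argument then runs as follows.
\begin{enumerate}
\item If $e_1=0$, then $\dim\mathcal K'<d$. So $\mathcal K'$ vanishes at, hence lies in, every minimal prime of $G$; thus $\mathcal K'$ is nilpotent.
\item In particular the image of $B_1$ in $G_0=B/A_1B$ is nilpotent. Hence $B_+\subseteq\sqrt{A_+B}$, so $B/A_+B$ vanishes in large degrees, and $B$ is finite over $A$ by graded Nakayama.
\item Now $(B/A)_{\geq 1}$ is a finitely generated $A$-module killed by a power of $\mathbf m$, with Hilbert function $\lambda_R(B_n/A_n)$. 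So $e_1=0$ means $\dim_A(B/A)<d$.
\item Every minimal prime of $A$ contracts from a minimal prime of the equidimensional ring $B$. Hence $A$ is equidimensional of dimension $d$, and so $(B/A)_{\mathbf q}=0$ for every minimal prime $\mathbf q$.
\end{enumerate}
The same module $B/A$ also gives the forward direction at once, since there $\dim_A(B/A)<\dim A=d$. Your ``birational componentwise'' argument never actually links $\mathcal K$ to $\mathcal T/\mathcal A$.
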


\begin{propose}\label{p1}
Let $N\subseteq M$ be a $\N$-graded $A$-modules as in \ref{d1} and \ref{n1}.
If $\rank\;M = \rank\;N$  and ${\tilde f_M}(c) = {\tilde f_N}(c)$ for some $c > \max\{d_M, d_N\}$ then 
\begin{enumerate}
 \item 
${\tilde f_M} \equiv {\tilde f_N}$.
\item The canonical map of $(c,1)$-diagonal subalgebras
$$A[{N}t]_{\Delta_{(c,1)}} = \oOplus_{n\geq 0}(N_c)^{n}t^{n}\longto A[{M}t]_{\Delta_{(c,1)}} = \oOplus_{n\geq 0}(M_c)^{n}t^{n}$$
is an integral extension of rings.
\end{enumerate}

\end{propose}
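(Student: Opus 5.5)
We first rewrite the hypothesis at $c$ as an equality of multiplicities of the two diagonal algebras, use Corollary~\ref{SUV} to get integrality (assertion (2)), and then transport integrality back to the modules to get assertion (1) from Theorem~\ref{t4}.

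\textbf{Step 1: reduce to diagonal algebras.} Since $c>\max\{d_M,d_N\}$, Remark~\ref{generators} gives $(M^n)_{cn}=(M_c)^n$ and $(N^n)_{cn}=(N_c)^n$. Hence
\[
A[Mt]_{\Delta_{(c,1)}}=\oplus_n (M_c)^n t^n,\qquad A[Nt]_{\Delta_{(c,1)}}=\oplus_n (N_c)^n t^n
\]
are standard graded $k$-algebras, and the second is a graded subalgebra of the first. For $c>d_M$ we have $f_M(c)=\tilde f_M(c)$ by Remark~\ref{r2}, and likewise for $N$. Both modules have rank $e$, so both functions use the same normalisation $n^{d+e-2}/(d+e-1)!$. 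The hypothesis $\tilde f_M(c)=\tilde f_N(c)$ therefore reads
\[
\lim_{n\to\infty}\frac{\ell_k((M_c)^n)-\ell_k((N_c)^n)}{n^{d+e-2}}=0 .
\]
Each diagonal algebra is a domain (a subring of ${\rm Sym}_A(F)$) of Krull dimension $d+e-1$. This dimension should follow from the fact that $f_M(c)=P_M(c)>0$ is attained with growth exactly $n^{d+e-2}$, by Theorem~\ref{t2}.

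\textbf{Step 2: integrality, assertion (2).} Apply Corollary~\ref{SUV} with $R=k$, $A\rightsquigarrow A[Nt]_{\Delta_{(c,1)}}$ and $B\rightsquigarrow A[Mt]_{\Delta_{(c,1)}}$. The condition $\lambda_R(B_1/A_1)=\ell_k(M_c/N_c)<\infty$ holds trivially. The corollary gives that $\ell_k(B_n/A_n)$ is a polynomial of degree $\le d+e-2$ with leading coefficient $e_1(A,B)/(d+e-2)!$. The limit in Step 1 says this coefficient vanishes, so $e_1(A,B)=0$. The ring $B$ is a domain finitely generated over a field, hence equidimensional and universally catenary. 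The converse half of Corollary~\ref{SUV} then gives that $B$ is integral over $A$, which is (2).

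\textbf{Step 3: from integrality of diagonals to $\tilde f_M\equiv\tilde f_N$, assertion (1).} This is the main obstacle. Integrality of the diagonal algebras says $M_c$ is integral over $N_c$ in the Rees-algebra sense. We then have
\[
A[N_cAt]\subseteq A[M_cAt],\qquad (M_cA)^n=(M^n)_{\ge cn},
\]
so $M_cA$ is integral over $N_cA$. Since $N_cA\subseteq N\subseteq M$, it follows that $N_cA$ is a reduction of $M_cA$. Theorem~\ref{t4} then gives $\tilde f_{M_cA}\equiv\tilde f_{N_cA}$.

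It remains to show $\tilde f_{M_cA}=\tilde f_M$ (and the same for $N$), i.e.\ that the saturation is unchanged up to lower order. The modules $M^n$ and $(M^n)_{\ge cn}$ agree in degrees $\ge cn$, so their saturations in $F^n$ coincide: any element killed into $M^n$ by a power of ${\bf m}$ is killed into the truncation by a higher power. Thus $\widetilde{M^n}=\widetilde{(M_cA)^n}$, and the saturated functions agree exactly. Squeezing, $\tilde f_{N_cA}\le\tilde f_N\le\tilde f_M=\tilde f_{M_cA}=\tilde f_{N_cA}$, which gives (1).

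If the saturation identification proves delicate, the fallback is to pass to $X={\bf Proj}$. There, integrality of the Rees algebras makes the blow-ups finite over one another. Then ${\rm vol}(xH+E)$ is preserved under the pullback along this finite birational map, which again gives (1).
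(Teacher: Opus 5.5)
Your proposal is correct and is essentially the paper's own proof. You convert $\tilde f_M(c)=\tilde f_N(c)$ into $f_M(c)=f_N(c)$ via Remark~\ref{r2}, get $e_1=0$ and hence integrality of the diagonal algebras from Corollary~\ref{SUV}, pass to the reduction $N_cA\subseteq M_cA$ and apply Theorem~\ref{t4}, and finish with $\widetilde{M^n}=\widetilde{(M_cA)^n}$ (and likewise for $N$) because the quotient has finite length. Your dimension count $d+e-1$ for the diagonal algebras is the correct one (the paper's text says $d+e-2$), and the Proj fallback is not needed.
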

\begin{proof}We note $(N^n)_{cn} = (N_c)^n$ and $(M^n)_{cn} = (M_c)^n$, for all $n$. Therefore 
the $(c,1)$-diagonal subalgebras
$A[{N}t]_{\Delta_{(c,1)}}$  and 
$A[{M}t]_{\Delta_{(c,1)}}$ 
are standard graded algebras over the field $k$. 
Since $c> \max\{d_M, d_N\}$ there is $n_0$ and polynomials $P_M(X, Y)$ and $P_N(X, Y)$ as in Remark~\ref{r2}
such that for all $n\gg 0$ 
$$\ell_k((M^n)_{cn}) = P_M(cn, n)+Q_M(cn, n)\quad\mbox{and}\quad
\ell_k((N^n)_{cn}) = P_M(cn, n).$$ 
In particular both the diagonal subalgebras are of Krull dimension $d+e-2$ and now for $e_1(-, -)$ defined as in the Corollary~\ref{SUV}
\begin{multline*}
e_1(A[{M}t]_{\Delta_{(c,1)}}, A[{N}t]_{\Delta_{(c,1)}})
 = \lim_{n\to\infty} \displaystyle{
\frac{\ell_k((M_c)^n/(N_c)^n)}{n^{d+e-2}/(d+e-2)!} }= \frac{1}{(d+e-1)}  (f_M(c)-f_N(c)) = 0.
\end{multline*}
Therefore 
$A[Nt]_{\Delta(c,1)}\longto 
A[{M}t]_{\Delta(c,1)}$ is an integral extension of rings which proves the assertion~(2). Further there 
 exists  $n\geq 1$ such that 
 $$(M_c)^n = (N_c)(M_c)^{n-1}\implies 
 (M_cA)^n = (N_cA)(M_cA)^{n-1}
 .$$
Now applying Theorem~\ref{t4} to the $\N$-graded $A$-modules
$N_cA = N_{\geq c}\subseteq M_cA = M_{\geq c}$ we get
${\tilde f_{M_cA}} \equiv {\tilde f_{N_cA}}$.
We note that each given  $n\geq 0$
$$
 \ell_k(M^n/(M_cA)^n) \leq \sum_{m=0}^{cn} \ell_k((M^n)_m) < \infty\quad\mbox{and}\quad
\ell_k(N^n/(N_cA)^n)\leq \sum_{m=0}^{cn} \ell_k((N^n)_m) < \infty$$ which implies
${\tilde M^n} = \tilde{(M_cA)^n}$
and
${\tilde N^n} = \tilde{(N_cA)^n}$.
Therefore
${\tilde f_{M}} \equiv {\tilde f_{M_cA}}
\equiv
{\tilde f_{N_cA}} \equiv {\tilde f_{N}}$ which proves assertion~(1).
\end{proof}

 We 
define $S=A[y]$, where $y$ is an indeterminate with $\deg y=1$, and $\mathbf{n} = \mathbf{m} + (y)$ be the unique homogeneous maximal ideal of $S$. Let $\mathsf{M} = M\tensor_AA[y]$ be the extension of the module $M$ in $S$. 

Now $S$ is a ring of dimension $d+1$ and $\rank\;{\mathsf M} = \rank\;{M} = e$ as $S$-module with an embeddding 
${\mathsf M}\longto {\mathsf F 
}$ induced by the embedding $M
\longto F$.
Note that this is standard bigraded algebras over the standard graded ring $S$, where degree of $y$ in $S = R[y]$ is $(1,0)$ and therefore $(m,n)^{th}$ component for ${\mathsf M}$ (similarly for ${\mathsf N}$) is given by 
$$({\mathsf M}^n)_m = (M^n)_m + (M^n)_{m-1}y +
(M^n)_{m-2}y^2+\cdots (M^n)_{0}y^m.$$
 
We know that  $S[{\bf M}t]$ is an integral domain of Krull dimension $d+e$.  
By Theorem~\ref{t2} the adic density function and the saturated density function is given by  
$$f_{\mathsf M}:\R\longto \R_{\geq 0}\quad \mbox{where}\quad x\to 
\lim_{n\to \infty}\frac{\ell_k(({\mathsf M}^n)_{\lfloor xn\rfloor})}{n^{d+e-1}/ (d+e)! }$$
and 
$${\tilde f}_{\mathsf M}:\R\longto \R_{\geq 0}\quad \mbox{where}\quad x\to 
\lim_{n\to \infty}\frac{\ell_k(({\mathsf M}^n:_{{\mathsf F^n}}{\bf n}^{\infty})_{\lfloor xn\rfloor})}{n^{d+e-1}/(d+e)!}$$
are  well defined functions.

We note that 
$$\frac{f_{\mathsf M}(x)}{d+e} = 
\lim_{n\to \infty}\sum_{m=0}^{\lfloor xn\rfloor}\frac{\ell_k((M^n)_m)}{n^{d+e-2}/(d+e-1)! } = 
\lim_{n\to \infty}\int_0^xf_n(y)dy  = \int_0^x\lim_{n\to \infty}f_n(y)dy 
=\int_0^xf_M(y)dy,$$
where the second last equality holds by 
 applying Lebesgue dominated convergence theorem to the pointwise converging sequence 
$\{f_n\}_n$ on the interval $[0, x)$.

For an integer $c > d_M = d_{\mathsf M}$  the diagonal $(c,1)$-subalgebra of $S[{\mathsf M}t]$, namely, 
$$S[{\mathsf M}t]_{\Delta_{(c,1)}}  = \oOplus_{n\geq 0} {({\mathsf M}^{n})}_{cn}t^{n} = \oOplus_{n\geq 0} {({\mathsf M}_c)^{n}}t^{n} $$ is standard graded over $S_0= k$ and 
$f_{\mathsf M}(c) = e(S[{\mathsf M}t]_{\Delta(c,1)})$ the multiplicity of the ring
$S[{\mathsf M}t]_{\Delta(c,1)}$.

\begin{thm}For a given $N\subseteq 
M\subseteq F$ as ~\ref{n1}, following statements are equivalent.
\begin{enumerate}
 \item $N$ is a reduction of $M$.
 \item $\rank\;M = \rank\;N$, $\varepsilon(M) = \varepsilon(N)$ and $e_i(A[Mt]) = e_i(A[Nt])$ for all $0\leq i \leq d-1$.
 \item $\rank\;M = \rank\;N$ and 
 $e(S[{\mathsf M}t]_{\Delta(c,1)}) =  
e(S[{\mathsf N}t]_{\Delta(c,1)})$ for 
some(all) integer(s) $c > \max\{d_M, d_N\}$
 \end{enumerate}
\end{thm}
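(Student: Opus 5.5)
We prove $(1)\Rightarrow(2)\Rightarrow(3)\Rightarrow(1)$, using the density functions constructed above as the common currency.

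For $(1)\Rightarrow(2)$, Theorem~\ref{t4} gives $\rank N=\rank M=e$, $f_N\equiv f_M$ and ${\tilde f}_N\equiv{\tilde f}_M$. Then $f_{\varepsilon(N)}={\tilde f}_N-f_N={\tilde f}_M-f_M=f_{\varepsilon(M)}$, so Theorem~\ref{l1} gives $\varepsilon(N)=\varepsilon(M)$. For the mixed multiplicities, the reduction property gives $d_N\le d_M$. Indeed, $M^{n_0+1}=NM^{n_0}$ implies $(M^{n+1})_{\ge d_N(n+1)+\text{const}}\supseteq$ everything, so the largest generator degrees agree for large $n$. Alternatively, for $x>\max\{d_M,d_N\}$ both $f_M$ and $f_N$ equal the polynomials $P_M(x)$ and $P_N(x)$ of Remark~\ref{r2}. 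Equality of the functions on an interval forces equality of the polynomials, so comparing coefficients gives $e_i(A[Mt])=e_i(A[Nt])$ for $0\le i\le d-1$.

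For $(2)\Rightarrow(3)$, equality of all the $e_i$ gives $P_M=P_N$, hence $f_M(x)=f_N(x)$ and ${\tilde f}_M(x)={\tilde f}_N(x)$ for $x>\max\{d_M,d_N\}$. Here we use that $f$ and ${\tilde f}$ agree with the common polynomial on that range (Theorem~\ref{t3} and Remark~\ref{r2}). In particular ${\tilde f}_M(c)={\tilde f}_N(c)$ for any such $c$, so Proposition~\ref{p1} yields ${\tilde f}_M\equiv{\tilde f}_N$. Combining with $\varepsilon(M)=\varepsilon(N)$ gives
$$\int_{\R}(f_M-f_N)\,dx=\int_{\R}({\tilde f}_M-{\tilde f}_N)\,dx-(\varepsilon(M)-\varepsilon(N))=0.$$
Since $N^n\subseteq M^n$, we have $f_N\le f_M$ pointwise. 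A nonnegative function with zero integral that is continuous off $d_1$ must vanish identically, so $f_M\equiv f_N$. Now use the formula above,
$$f_{\mathsf M}(c)/(d+e)=\int_0^c f_M(y)\,dy.$$
It gives $f_{\mathsf M}(c)=f_{\mathsf N}(c)$, which is $e(S[{\mathsf M}t]_{\Delta(c,1)})=e(S[{\mathsf N}t]_{\Delta(c,1)})$, for all $c>\max\{d_M,d_N\}$.

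For $(3)\Rightarrow(1)$, the idea is to run the argument of Proposition~\ref{p1} over $S$. The two diagonal subalgebras $\oplus_n({\mathsf N}_c)^n\subseteq\oplus_n({\mathsf M}_c)^n$ are standard graded $k$-domains of the same dimension $d+e-1$, since they sit inside the domain ${\rm Sym}_S({\mathsf F})$ and the ranks agree. Equal multiplicities then give $e_1=\lim\ell_k(({\mathsf M}_c)^n/({\mathsf N}_c)^n)/n^{d+e-2}\cdot(\text{const})=0$, because the Hilbert functions differ by something of lower order. A domain is equidimensional and universally catenary as a finitely generated $k$-algebra, so Corollary~\ref{SUV} makes the extension integral. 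Hence $({\mathsf M}_c)^{r+1}=({\mathsf N}_c)({\mathsf M}_c)^{r}$ for some $r$.

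The main obstacle is descending from this degree-$c$ slice statement over $S$ to $M^{n+1}=NM^n$ over $A$. The key point is that $({\mathsf M}^n)_{cn}=\oplus_{j\le cn}(M^n)_j y^{cn-j}$ contains all graded pieces of $M^n$ of degree at most $cn$, including the low degrees near $d_1$. Comparing $y$-coefficients degree by degree in $({\mathsf M}_c)^{r+1}=({\mathsf N}_c)({\mathsf M}_c)^r$ therefore gives $(M^{r+1})_j=(NM^r)_j$ for every $j\le c(r+1)$. For $j\ge c(r+1)$ the equality follows from Remark~\ref{generators}, since $(M^{r+1})_{\ge c(r+1)}=(M_cA)^{r+1}$ and the integral relation already holds after extending to $A$. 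Together these give $M^{r+1}=NM^r$ in every degree. I expect this bookkeeping to be the delicate step; in particular one must check that $y$ acts freely, so that the coefficient comparison is legitimate.
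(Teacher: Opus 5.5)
Your argument is correct and is essentially the paper's proof. For $(1)\Rightarrow(2)$ you use Theorem~\ref{t4} with Theorem~\ref{l1} and Remark~\ref{r2}; for $(2)\Rightarrow(3)$ you use Proposition~\ref{p1}, the $\varepsilon$-integral and $f_{\mathsf M}(c)/(d+e)=\int_0^c f_M$; for $(3)\Rightarrow(1)$ you get integrality of the $S$-diagonal extension directly from Corollary~\ref{SUV} (the paper gets it by citing Proposition~\ref{p1}, which runs the same argument) and then make the same $y$-coefficient comparison. Drop the aside that a reduction forces $d_N\le d_M$: it is false, since in $k[x,y,z]$ the ideal $N=(x^2,y^2,xyz)$ is a reduction of $M=(x,y)^2$ with $d_N=3>d_M=2$. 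Nothing in your argument depends on it, because you work with $\max\{d_M,d_N\}$ throughout.
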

\begin{proof}(1) $\implies $ (2). By Theorem~\ref{t4} we have $\rank\;M = \rank\;N$, and 
$f_M \equiv f_N$ and 
${\tilde f_M} \equiv {\tilde f_N}$. Now the rest of the assertion~(2)
follows from Theorem~\ref{l1} and Remark~\ref{r2}.

 \vspace{5pt}
 
(2)$\implies$ (3). 
Since $\rank\;M = \rank\;N = e$ we define
\begin{equation}\label{adic}f_{N}(x)= \lim_{n\to \infty}\frac{\ell_k(( N^n)_{\lfloor xn\rfloor})}{n^{d+e-2}/ (d+e-1)! }\quad\mbox{and}\quad
{\tilde  f_N}(x)= \lim_{n\to \infty}\frac{\ell_k((N^n)_{\lfloor xn\rfloor}
)}{n^{d+e-2}/(d+e-1)!}.\end{equation}

The equalities  $e_i(A[Mt]) = e_i(A[Nt])$ imply that 
${\tilde f_M} \equiv {\tilde f_N}$ and therefore 
 $f_M(x) = f_N(x)$ for  $x>\max\{d_M, d_N\}$ and we have 
 $$ 0 = \varepsilon(M) - \varepsilon(N)
 = \int_0^cf_{N}(x)dx - \int_0^cf_M(x)dx =  
 e(S[{\mathsf N}t]_{\Delta(c,1)}) -
e(S[{\mathsf M}t]_{\Delta(c,1)}),$$
where the second equality follows by Theorem~\ref{l1}.

 \vspace{5pt}
 
(3)$\implies$ (1). Let $c>\max\{d_M, d_N\}$ be an integer. Again since 
$\rank\;M = \rank\;N = e$ the adic and saturated density function for ${N}$ is given as in (\ref{adic}). Also
$${\tilde f}_{\mathsf M}(c) = f_{\mathsf M}(c)\quad\mbox{and}\quad 
{\tilde f}_{\mathsf N}(c) = f_{\mathsf N}(c).$$
But then 
$${\tilde f}_{\mathsf M}(c) = e(S[{\mathsf M}t]_{\Delta(c,1)}) =e(S[{\mathsf N}t]_{\Delta(c,1)}) =
{\tilde f}_{\mathsf N}(c).$$ 
Hece by Proposition~\ref{p1}
the canonical map $S[{\mathsf N}t]_{\Delta(c,1)}\longto
 S[{\mathsf M}t]_{\Delta(c,1)}$ is an integral extension of rings.
 Therefore there 
 exists  $m\geq 1$ such that 
 $$({\mathsf M}^m)_{cm} = {\mathsf N}_c({\mathsf M}^{m-1})_{c(m-1)}.$$
 Now, by definition
$$({\mathsf M}^m)_{cm} = (M^m)_{cm} + (M^m)_{cm-1}y +
(M^m)_{cm-2}y^2+\cdots + (M^m)_{0}y^{cm},$$
$$({\mathsf M}^{m-1})_{c(m-1)} = (M^{m-1})_{c(m-1)} + (M^{m-1})_{c(m-1)-1}y +
(M^{m-1})_{c(m-1)-2}y^2+\cdots + (M^{m-1})_{0}y^{c(m-1)},$$
$$({\mathsf N})_{c} = N_c + N_{c-1}y +
N_{c-2}y^2+\cdots + N_{0}y^{.c}.$$

Now it is enough to prove the following.

\noindent{\bf Claim}. $M^m = N.M^{m-1}$.

\vspace{5pt}
\noindent{Proof of the claim}:\quad 
 Let $x\in M^m$ be a homogeneous element.

\vspace{5pt}

\noindent{\underline {Case}}~(1).\quad $\deg~x = d_x\geq cm$. Then $x = \sum_ir_im^{i_1}\cdots m^{i_s}$, where $r_i\in A$ is   homogeneous element  and $\sum_ji_j = m$. Now $\deg~m_i\leq c$, gives that $\deg~m_1^{i_1}\cdots m_s^{i_s}\leq cm$.
Now $A$ being standard graded ring over $A_0$, we can write $r_i = r_i'r_i''$, where $r_i'$ is an element of degere $cm-\deg~m_1^{i_1}\cdots m_s^{i_s}$. Therefore 
$$ x = \sum_i\big(r'_im_1^{i_1}\cdots m_s^{i_s}\big)r_i'' \in A_{d_x-cm}(M^m)_{cm} = N_c(M^{m-1})_{c(m-1)}A_{d_x-cm}\subseteq NM^{m-1}. $$
 
\vspace{5pt}

\noindent{\underline {Case}}~(2).\quad $\deg~x = d_x < cm$. Then $x\in (M^m)_{cm-i}$, where $0< i \leq cm$.

By the above expression, comparing the coefficients of $y^i$ we get  have
$$(M^m)_{cm-i}\subseteq N_c(M^{m-1})_{c(m-1)-i}+
N_{c-1}(M^{m-1})_{c(m-1)-i+1}+\cdots +
N_{0}(M^{m-1})_{c(m-1)-i+c}.$$
 
 Therefore $M^m\subseteq NM^{m-1}$.
\end{proof}

Proof for the following theorem is similar to the one given \cite[Theorem~5.5]{DRT26}.

\begin{thm}\label{ID}Let  $N\subseteq 
M\subseteq F$ as  in~\ref{n1}, and suppose they have the same rank. Further let $S = A[y]$, ${\mathsf M} = M\tensor_AS$ and 
${\mathsf M} = M\tensor_AS$. We denote  $d_{N, M} = \max\{d_M, d_N\}$.
Then the following statements are equivalent.
\begin{enumerate}
 \item $N$ is a reduction of $M$.
 \item $e(S[{\mathsf M}t]_{\Delta_{(c,1)}}) =
 e(S[{\mathsf N}t]_{\Delta_{(c,1)}})$ for some integer $c > d_{N, M}$. 
 \item $\varepsilon(M) = \varepsilon(N)$ and $e(A[{M}t]_{\Delta(c,1)}) =  
e(A[{N}t]_{\Delta(c,1)})$ for 
some(all) integer(s) $c > d_{N, M}$.
\item  
$\varepsilon({\mathsf M}_cS) = 
 \varepsilon({\mathsf N}_cS)$ for 
some(all) integer(s) $c >  d_{N, M}$.
 \item $f_M\equiv f_N$ and 
 ${\tilde f}_M \equiv {\tilde f}_N$.
 \item 
 $e_i(S[{\mathsf M}t]) = e_i(S[{\mathsf N}t])$ for all $0\leq i \leq d$.
 \end{enumerate}
\end{thm}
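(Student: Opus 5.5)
The plan is to use (1) as a hub. We show (1) implies every other statement, and that each of (2)--(6) implies (1). Most arrows reduce to the immediately preceding (unnumbered) theorem, Theorem~\ref{t4}, Proposition~\ref{p1}, Theorem~\ref{l1} and the identities relating density functions of ${\mathsf M}$ to those of $M$.

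\emph{(1) implies everything.} By Theorem~\ref{t4}, $f_M\equiv f_N$ and ${\tilde f}_M\equiv{\tilde f}_N$, which is (5).

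\begin{itemize}
\item For (3): Theorem~\ref{l1} writes $\varepsilon$ as $\int({\tilde f}-f)$, so $\varepsilon(M)=\varepsilon(N)$. Also $e(A[Mt]_{\Delta(c,1)})$ is, up to the factor $(d+e-1)$, the value $f_M(c)$, so the diagonal multiplicities agree.
\item For (2) and (6): extending scalars, $N\otimes_AS$ is a reduction of $M\otimes_AS$. Theorem~\ref{t4} over $S$ then gives $f_{\mathsf M}\equiv f_{\mathsf N}$. Evaluating at $c$ gives (2). Comparing the polynomial parts for $x>d_{N,M}$ via Remark~\ref{r2} (applied over $S$) gives (6).
\item For (4): ${\mathsf N}_cS$ is a reduction of ${\mathsf M}_cS$, since $({\mathsf M}_c)^n=({\mathsf N}_c)({\mathsf M}_c)^{n-1}$ for $n\gg0$ by integrality of the diagonal extension. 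Theorem~\ref{t4} and Theorem~\ref{l1} over $S$ then give (4).
\end{itemize}

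\emph{Converses (2), (3), (5), (6) imply (1).}

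\begin{itemize}
\item (2)$\Rightarrow$(1) is exactly the step (3)$\Rightarrow$(1) of the preceding theorem. That argument uses Proposition~\ref{p1} over $S$ to get integrality of the $(c,1)$-diagonal extension, and then the comparison of $y$-coefficients to obtain $M^m=NM^{m-1}$.
\item (5)$\Rightarrow$(2): by Lebesgue dominated convergence, $f_{\mathsf M}(c)=(d+e)\int_0^c f_M$, and likewise for $N$. So $f_M\equiv f_N$ forces $f_{\mathsf M}(c)=f_{\mathsf N}(c)$, i.e.\ equal diagonal multiplicities. The hypothesis on ${\tilde f}$ is not even needed here.
\item (3)$\Rightarrow$(2): the diagonal equality says $f_M(c)=f_N(c)$. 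By Proposition~\ref{p1} applied to ${\tilde f}$ (using $f={\tilde f}$ beyond $d_{N,M}$), we get ${\tilde f}_M\equiv{\tilde f}_N$. Then Theorem~\ref{l1} gives
\[
\varepsilon(M)-\varepsilon(N)=\int_{-c_0}^{c}({\tilde f}_M-{\tilde f}_N)-\int_0^c(f_M-f_N)=\int_0^c(f_N-f_M),
\]
so $\int_0^c f_M=\int_0^c f_N$, which is (2).
\item (6)$\Rightarrow$(2): by Remark~\ref{r2} over $S$, the mixed multiplicities determine the polynomial $P_{\mathsf M}$ with $f_{\mathsf M}=P_{\mathsf M}$ on $(d_{N,M},\infty)$. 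Hence $f_{\mathsf M}(c)=f_{\mathsf N}(c)$.
\end{itemize}

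\emph{(4)$\Rightarrow$(1), the main obstacle.} Here one only knows equality of $\varepsilon$ for the truncated modules ${\mathsf M}_cS={\mathsf M}_{\geq c}$ and ${\mathsf N}_cS$, which are generated in the single degree $c$. I would argue in four steps.

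\begin{enumerate}
\item Since these modules are generated in degree $c$, their adic density functions vanish below $c$. Beyond $c$ they agree with $f_{\mathsf M}$ and $f_{\mathsf N}$, because $({\mathsf M}_cS)^n=({\mathsf M}^n)_{\geq cn}$ by Remark~\ref{generators}.
\item Their saturations coincide with $\tilde{{\mathsf M}^n}$ and $\tilde{{\mathsf N}^n}$, by the finite-colength argument in Proposition~\ref{p1}.
\item Theorem~\ref{l1} then gives
\[
\varepsilon({\mathsf M}_cS)=\int_{-c_0}^{c}{\tilde f}_{\mathsf M},
\]
and similarly for ${\mathsf N}$.
\item The key extra input is the identity ${\tilde f}_{\mathsf M}(x)=(d+e)\int_{-c_0}^x{\tilde f}_M$, obtained from $\tilde{{\mathsf M}^n}=\tilde{M^n}\otimes S$ (saturation commutes with the flat extension) together with dominated convergence.
\end{enumerate}

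With this, (4) becomes an equality of iterated integrals of ${\tilde f}_M$ and ${\tilde f}_N$. To conclude, I would try to combine it with the inequality ${\tilde f}_{\mathsf N}\le{\tilde f}_{\mathsf M}$ pointwise, which comes from $N\subseteq M$ of the same rank. Equality of the integrals of a nonnegative continuous difference then forces ${\tilde f}_{\mathsf M}(c)={\tilde f}_{\mathsf N}(c)$. Proposition~\ref{p1} over $S$ then yields integrality of the diagonal extension, and we finish as in (2)$\Rightarrow$(1).

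Verifying this monotonicity, and the compatibility of saturation with $\otimes_AS$, is the step I expect to require the most care.
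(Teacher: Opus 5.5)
Your overall architecture matches what the paper intends; the paper only points to the preceding theorem and to [DRT26, Thm.~5.5]. The ingredients are the same:
\begin{itemize}
\item forward implications from Theorem~\ref{t4}, applied over $A$, over $S$, and to the truncations;
\item the converse through Proposition~\ref{p1} over $S$ (equal $S$-diagonal multiplicities give integrality of the diagonal extension), followed by comparing $y$-coefficients to get $M^m=NM^{m-1}$;
\item Theorem~\ref{l1}'s formula $\varepsilon=\int(\tilde f-f)$, together with $f_{\mathsf M}(c)=(d+e)\int_0^c f_M$, to turn $\varepsilon$-equalities into equalities of $S$-diagonal multiplicities.
\end{itemize}
Those parts are fine. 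Your observation that $f_M\equiv f_N$ alone already gives (2) is also correct.

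There is, however, a false step in (4)$\Rightarrow$(1): step~4. There you claim $\widetilde{\mathsf M^n}=\widetilde{M^n}\otimes_AS$ and hence $\tilde f_{\mathsf M}(x)=(d+e)\int_{-c_0}^x\tilde f_M$. Saturation with respect to $\mathbf mS$ does commute with $-\otimes_AS$. But $\tilde f_{\mathsf M}$ is defined using saturation with respect to $\mathbf n=\mathbf mS+yS$, and $y$ is a nonzerodivisor on $\mathsf F^n/\mathsf M^n=(F^n/M^n)[y]$. Concretely, if $u=\sum_i u_iy^i\in F^n[y]$ and $y^ku\in M^n[y]$, then every $u_i\in M^n$. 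Hence $\widetilde{\mathsf M^n}=\mathsf M^n$ and $\tilde f_{\mathsf M}\equiv f_{\mathsf M}=(d+e)\int_0^x f_M$.

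Your identity is in fact contradictory even using only the paper's own results. For $x>d_M$, Theorem~\ref{l1} over $S$ gives $\tilde f_{\mathsf M}(x)=f_{\mathsf M}(x)=(d+e)\int_0^xf_M$. Combined with your identity this yields $0=(d+e)\int_{-c_0}^x(\tilde f_M-f_M)=(d+e)\,\varepsilon(M)$ for every $M$. That fails already for the ideal $I=(X^2,XY)\subseteq k[X,Y]$: there $\widetilde{I^n}/I^n\cong X^nA/X^n\mathbf m^n$ has length $\binom{n+1}{2}$, so $\varepsilon(I)=1$.

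Fortunately step~4 is not load-bearing, so the proof can be repaired as follows.
\begin{itemize}
\item Your step~3 alone gives $\varepsilon(\mathsf M_cS)=\int_{-c_0}^c\tilde f_{\mathsf M}=\int_0^c f_{\mathsf M}$, and similarly for $\mathsf N$. So (4) is equivalent to $\int_0^c(c-t)\,(f_M-f_N)(t)\,dt=0$.
\item Since $\mathsf N^n\subseteq\mathsf M^n$, you have $f_{\mathsf N}\le f_{\mathsf M}$ pointwise, and both functions are continuous.
\item Equality of the integrals then forces $f_{\mathsf M}(c)=f_{\mathsf N}(c)$, which is (2). Finish as in your (2)$\Rightarrow$(1).
\end{itemize}
So delete step~4 and the detour through iterated integrals of $\tilde f_M$. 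The monotonicity you were worried about is then immediate.
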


\bibliographystyle{alpha}

\end{document}